\documentclass[11pt]{amsart}
\usepackage{amssymb}
\usepackage{amsfonts}
\usepackage{amscd}
\usepackage{amsmath}
\usepackage{amsthm}
\usepackage{mathrsfs}
\usepackage{curves}
\usepackage{epsfig}
\usepackage[pass]{geometry}
\usepackage{graphicx}
\usepackage{verbatim}
\usepackage{latexsym}
\usepackage{eucal}
\usepackage{bbm}
\usepackage{tikz}
\usetikzlibrary{matrix}

\setlength{\oddsidemargin}{.05in}
\setlength{\evensidemargin}{.05in}
\setlength{\textwidth}{6.45in}
\setlength{\topmargin}{.25in}
\setlength{\textheight}{608pt}
 
\numberwithin{equation}{section}

\newtheorem{theorem}{Theorem}[section]
\newtheorem{lemma}[theorem]{Lemma}
\newtheorem{prop}[theorem]{Proposition}
\newtheorem{cor}[theorem]{Corollary}

\newtheorem{definition}[theorem]{Definition}

\newtheorem{remark}[theorem]{Remark}

\newtheorem*{lemma*}{Lemma}


\def \mca {{\mathscr A}}
\def \mcb {{\mathscr B}}
\def \mcc {{\mathscr C}}
\def \mcd {{\mathscr D}}

\def \mci {{\mathscr I}}

\def \mcl {{\mathscr L}}
\def \mcm {{\mathscr M}}

\def \mcr {{\mathscr R}}

\def \mcv {{\mathscr V}}
\def \mcw {{\mathscr W}}

\def \mbr {{\mathbb R}}
\def \mbs {{\mathbb S}}


\def \loc {\text{loc}}

\def \defeq {\stackrel{\operatorname{def}}{=}}
\def \beqq {\begin{equation}}
\def \eeqq {\end{equation}}

\def \bpf {\begin{proof}}
\def \epf {\end{proof}}
\def \beq {\begin{equation*}}
\def \eeq {\end{equation*}}

\def \eps {\epsilon}   
   
\def \La {\Lambda}

\def \p {\partial}
\def \ha {\frac{1}{2}}

\def \tilde {\widetilde}


\begin{document}
\title{Streak artifacts from non-convex metal objects in X-ray tomography}
\author{Yiran Wang}
\address{Yiran Wang
\newline
\indent Department of Mathematics, Emory University}
\email{yiran.wang@emory.edu}
\author{Yuzhou Zou}
\address{Yuzhou Zou
\newline
\indent Department of Mathematics, Stanford University}
\email{zou91@stanford.edu}
\begin{abstract}
We study artifacts in the reconstruction of X-ray tomography due to nonlinear effects. For non-convex metal objects, we analyze the new phenomena of streak artifacts from inflection points on the boundary of metal objects.  We characterize the location and strength of all possible artifacts using notions of conormal distributions associated with the proper geometry.  
\end{abstract}
\date{\today}
\maketitle

\section{Introduction}\label{sec-intro}
Consider metal artifacts in CT scan, see for example \cite{Seo0} for the background. We begin with the mathematical setup in \cite{Seo} for beam hardening effects. Later, we will work with a more general setup. 
Let $f_E(x)$ be the attenuation coefficients which depends on $E$ the energy level. We assume that for $E \in [E_0 -\eps, E_0 + \eps]$, $\eps > 0$, 
\beq
f_E(x) = f_{E_0}(x) + (E - E_0)f \quad\text{where } f = \alpha\chi_D,
\eeq
with $\chi_D$ the characteristic function for a metal region $D\subset \mbr^2$, and $\alpha > 0$ a constant which approximates $\p f_E/\p E$ over $D$.  Let $\mcr$ denote the Radon transform on $\mbr^2$. The X-ray data, also called sinogram, is given by 
\beqq\label{eq-ctp}
P = \mcr f_{E_0} + P_{MA}, \ \ P_{MA} = - \ln\left(\frac{\sinh(\eps\mcr f)}{\eps\mcr f}\right).
\eeqq
The term $P_{MA}$ is derived from the Beer-Lambert law under some assumptions, see \cite{Seo}. We emphasize that the existence of the term is due to the dependency of $f_E$ on the energy level $E$. If $\alpha = 0$ so that $f = 0$, it is clear that $P_{MA} = 0$ and $P$ is exactly the Radon transform $\mcr f_{E_0}$, which is commonly assumed in CT scan. One can apply the filtered backprojection (FBP) to get $f_{E_0}$, namely 
\beq
f_{E_0} =  \mcr^*\mci^{-1} \mcr f_{E_0}.
\eeq
Here, $\mci^{-1}$ is the Riesz transform and $\mcr^*$ denotes the adjoint of $\mcr$. 
For $P$ in \eqref{eq-ctp}, we get 
\[
 f_{CT} = \mcr^*\mci^{-1} P = f_{E_0}  +  f_{MA}, \ \ f_{MA} = \mcr^*\mci^{-1} P_{MA}.
\] 
Note that we can write
\[f_{MA} = \mcr^*\mci^{-1}F(\mcr f),\quad\text{where }F(x) = -\ln\left(\frac{\sinh(\eps x)}{\eps x}\right)\text{ is smooth}.\]
The term $f_{MA}$ often causes streak artifacts  in the reconstruction, see Figure \ref{fig-art} for an illustration. An outstanding problem is to understand the mechanism of the artifact generation  and alleviate the effects. 
Our goal of this work is to give a quantitative description of the possible artifacts. We state a consequence of our main result. 
\begin{theorem}\label{thm-main}
Suppose $D$ is a simply connected bounded open domain in $\mbr^2$ with smooth boundary $\p D$ and satisfies assumptions (A1), (A2) in Section \ref{sec-lag}. Then 
\beq
\text{sing supp}(f_{MA})\subset \left(\bigcup_{L\in\mcl}{L}\right) \cup \p D.
\eeq 
where $\mcl$ is the collection of lines  $L$ which is either tangent at two non-inflection points, or tangent at only one inflection point, see Figure \ref{fig-art}. 
\end{theorem}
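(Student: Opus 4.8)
The plan is to propagate the singularities of $f_{MA}=\mcr^*\mci^{-1}F(\mcr f)$ through the three operations in turn: the Radon transform $\mcr$, the smooth nonlinearity $F$, and the filtered backprojection $\mcr^*\mci^{-1}$. Since $f=\alpha\chi_D$ and $\p D$ is smooth, $f$ is a conormal distribution associated with $\p D$ and $\WF(f)=N^*(\p D)\setminus 0$. As $\mcr$ is the elliptic Fourier integral operator associated with the point--line incidence relation $C$, the wavefront set of $\mcr f$ is contained in the image of $N^*(\p D)$ under $C$; concretely $\mcr f$ is singular exactly over the set $\mcz$ of lines tangent to $\p D$, i.e.\ the dual curve of $\p D$ inside the space of lines $\mbb$. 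The first substantive step is to upgrade this to a structural statement: writing $\mcr\chi_D(\omega,s)$ as the length of the chord $\ell_{\omega,s}\cap D$ and evaluating it by stationary phase with parameters, and invoking (A1), (A2), I would show that $\mcz$ is a curve whose only singularities are nodes (ordinary double points) --- at lines bitangent to $\p D$ at two non-inflection points --- and ordinary cusps --- at lines tangent to $\p D$ at a single inflection point; that near a smooth point of $\mcz$ the function $\mcr f$ is a square-root type conormal distribution associated with $\mcz$; that near a node it is a sum $u_1+u_2+(\text{smooth})$ of two such conormal distributions associated with the two transversal branches of $\mcz$; and that near a cusp it is the corresponding degenerate (cuspidal) conormal model.

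Next I apply $F$. Since $F(0)=0$ and $\mcr f$ is H\"older continuous (square-root, at worst cube-root, singularities), $F(\mcr f)$ is well defined classically, and $\WF(F(\mcr f))$ can be read off from the Taylor expansion of $F$ together with the stability of conormality under smooth nonlinear maps. Away from $\mcz$, $F(\mcr f)$ is smooth. Over a smooth point of $\mcz$, applying $F$ preserves conormality, so $\WF(F(\mcr f))$ there stays inside $N^*\mcz$. The new phenomenon is concentrated at the singular points of $\mcz$. Over a node $z_0$, write $\mcr f=u_1+u_2+r$ with $r$ smooth and $u_j$ conormal to the transversal branch $\mcz_j$; the interaction terms $u_1^ju_2^k$ ($j,k\ge 1$) in the expansion of $F$ are products of conormal distributions with respect to crossing hypersurfaces, whose wavefront set over $z_0$ fills out the non-conormal directions $\{\xi_1+\xi_2:\ \xi_j\in N^*_{z_0}\mcz_j\setminus 0\}$, hence (in the plane) essentially all of $T^*_{z_0}\mbb\setminus 0$. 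Over a cusp $z_0$ the analogous computation for the cuspidal model --- governed by the generic (cubic) behavior of $\p D$ at its inflection points assumed in (A2) --- likewise produces wavefront at $z_0$ in directions outside $N^*_{z_0}\mcz$.

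Finally I push forward by $\mci^{-1}$, an elliptic pseudodifferential operator which does not move $\WF$, and by the backprojection $\mcr^*$, a Fourier integral operator with the transposed relation $C^{t}$. The part of $\WF(F(\mcr f))$ over the smooth locus of $\mcz$ lies there in $N^*\mcz=C(N^*(\p D))$, so by the duality built into the Radon transform (microlocally $C^{t}\circ C=\Delta$) it backprojects precisely into $N^*(\p D)$ and contributes only $\p D$ to $\singsupp f_{MA}$. For the extra wavefront created over a node or a cusp $z_0$: every covector of $C^{t}$ lying over $z_0\in\mbb$ is based at a point of the line $\ell_{z_0}$, so this part of the wavefront contributes to $\singsupp f_{MA}$ only points of the single line $L=\ell_{z_0}$, which by construction is tangent to $\p D$ at two non-inflection points (node) or at one inflection point (cusp), i.e.\ $L\in\mcl$. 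Combining the three cases yields $\singsupp f_{MA}\subset\bigl(\bigcup_{L\in\mcl}L\bigr)\cup\p D$.

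The main obstacle is the analysis of $F(\mcr f)$ at the cusps: near an inflection tangent line $\mcz$ is not a transversal union of smooth hypersurfaces, so the classical theory of nonlinear interaction of conormal waves does not apply off the shelf; one needs an explicit normal form for $\mcr\chi_D$ there and a direct computation of $F$ applied to that model, which is precisely the class of ``conormal distributions associated with the proper geometry'' the paper develops. For the inclusion stated in Theorem~\ref{thm-main} it is enough to know that $\WF(F(\mcr f))$ over a cusp (and over a node) is contained in the fiber $T^*_{z_0}\mbb$; pinning down which directions actually occur --- needed to show the streaks are genuinely present --- additionally requires that the symbol of the cuspidal, respectively interaction, model be non-vanishing, which for the operator at hand follows from $F''(0)=-\eps^2/3\neq 0$.
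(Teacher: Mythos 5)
Your proposal is correct, and for this particular inclusion it takes a genuinely simpler route than the paper. The paper actually proves the stronger iterated-regularity statement Theorem~\ref{thm-main1} and derives Theorem~\ref{thm-main} as a corollary; to get Theorem~\ref{thm-main1} it must show that $F(\mcr f)$ retains iterated conormal regularity at the singular points of the dual curve $\mcz$, and since (Lemma~\ref{new-sing}) the naive space $I_kL^2(\mbr^2;\mcm(\La_G))$ at a cusp is \emph{not} a $C^\infty$ algebra, this forces the detour through the marked Lagrangian spaces $J_k(\mbr^2;G)=I_kL^2(\mbr^2;\mcm(\La_G,\Sigma))+I_kL^2(\mbr^2;\mcm(\La_B,\Sigma))$ of Melrose/S\'a Barreto and the global algebra $\mca$ of Section~\ref{sec-non}. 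Your key observation --- that for the bare singular-support bound it suffices to have $\WF(F(\mcr f))$ over each node or cusp $z_0$ contained in the fiber $T^*_{z_0}\mathcal{L}\setminus 0 = N^*\{z_0\}\setminus 0$, which is \emph{automatic} because wavefront sets project onto singular supports --- lets you bypass the $J_k$ apparatus entirely: off the singular points, $\mcr f$ is a bounded classical conormal distribution to the smooth hypersurface $\mcz$, so the $C^\infty$-algebra property of $L^\infty$ conormal spaces keeps $\WF(F(\mcr f))\subset N^*\mcz$ there; at $z_0$ the fiber statement is tautological; and $C^{-1}$ carries $N^*\mcz$ to $N^*\gamma$ and $N^*\{z_0\}$ to $N^*\ell_{z_0}$, from which the inclusion follows after backprojection. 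Your remarks about the interaction terms ``filling out'' the fiber and about $F''(0)\neq 0$ are correct but, as you yourself note, unnecessary for the inclusion; that lower-bound question is what the paper's Section~\ref{sec-new} is about. Two points you suppress: (i) applying the H\"ormander--Sato wavefront calculus to $\mcr^*\mci^{-1}$ requires the canonical relation to behave properly, which fails globally on $\mathcal{L}$ but is rescued by compact support of $f$ --- this is exactly what the microlocal parametrix construction of Section~\ref{sec-commute} is for, and your argument implicitly relies on that localization; (ii) the trade-off is that your argument yields only the support bound, whereas the paper's heavier machinery also produces the iterated-regularity order in Theorem~\ref{thm-main1}, i.e.\ the quantitative ``strength'' information that motivates the whole development.
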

(A1), (A2) are assumptions on the geometry of $D$ to simplify some analysis. We recall that $p$ is an infection point on $\p D$ if the curvature $\kappa = 0$ at $p$ and changes sign across $p$. The method we use also allows us to find the strength of the artifacts. The precise statement, Theorem \ref{thm-main1}, is stated in Section \ref{sec-lag} after  (A1), (A2)  and proper conormal distribution spaces are introduced.


\begin{figure}[htbp]
\centering
\includegraphics[scale = 0.17]{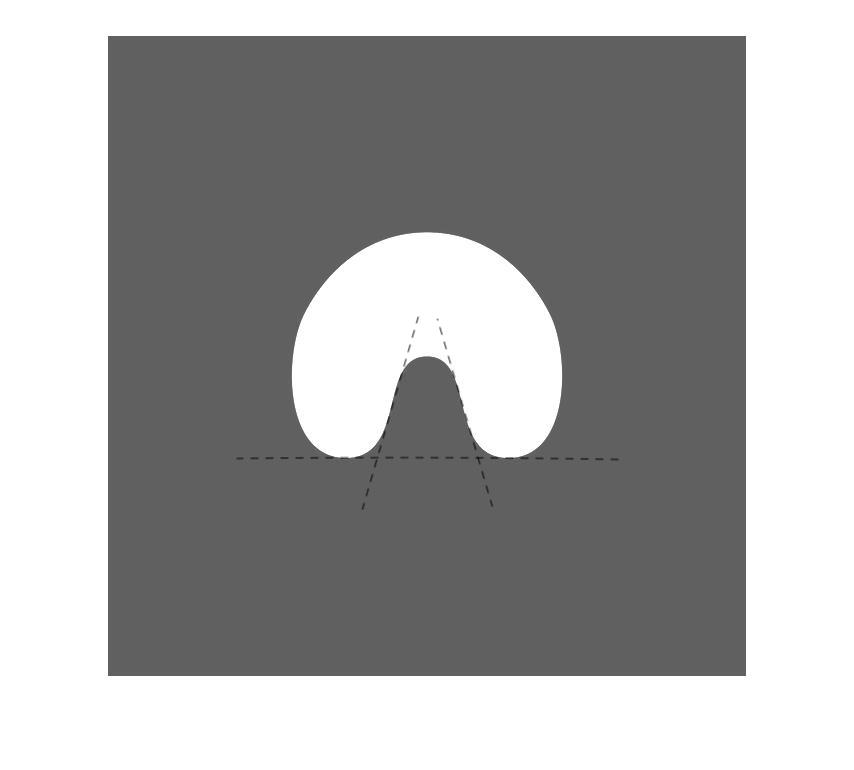} 
\includegraphics[scale = 0.17]{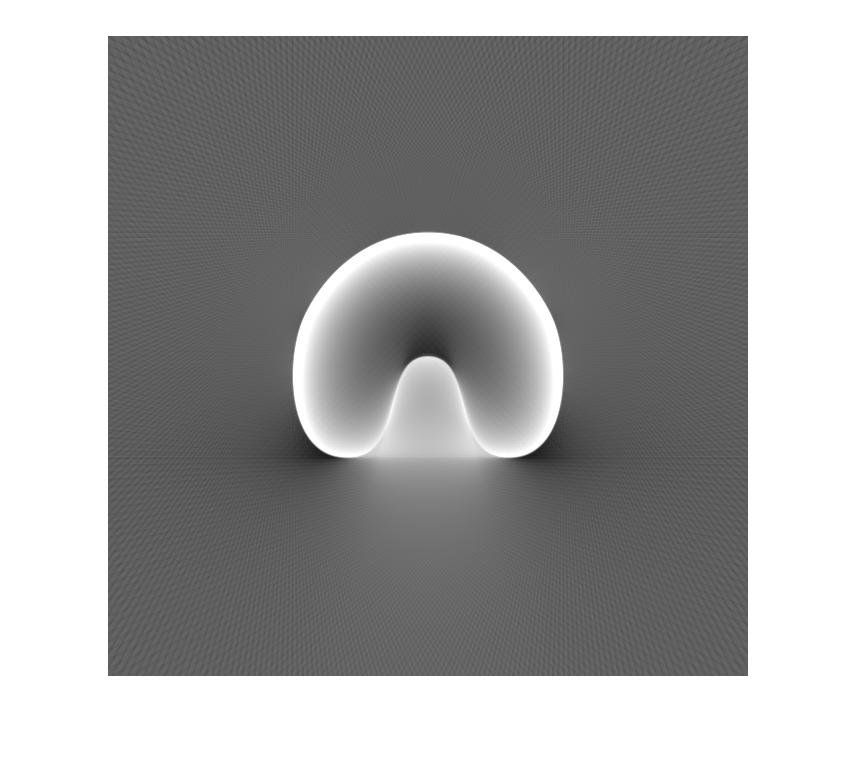} 
\includegraphics[scale = 0.34]{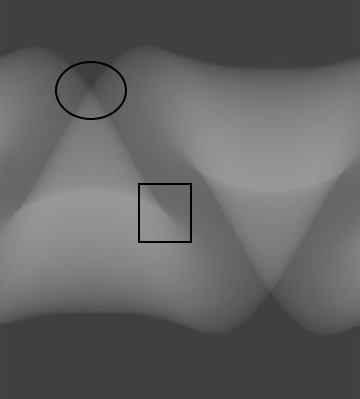}
\caption{Numerical simulation of steak artifacts for a non-strictly convex metal object. {\bf Left figure:} The white region is the non-convex metal object $D$. We marked the location of the artifacts predicted by Theorem \ref{thm-main} by the dashed lines. There are two lines corresponding to the two inflections points and one line that is tangent to two boundary points. {\bf Middle figure:} The reconstructed image using FBP with the nonlinear function $F(u) = a u^2$ for some constant $a$.  {\bf Right figure:} The sinogram of $\mcr\chi_D$. We circled the transversal intersection point in the oval and this corresponds to the artifact tangent to two boundary points. We circled the cusp point   in the square and this corresponds to the artifacts at the inflection point. }
\label{fig-art}
\end{figure}

The mathematical study of metal artifacts started from Park-Choi-Seo \cite{Seo} in which the authors characterized the artifacts using the concept of wave front set. It turns out that the singularity associated with the artifacts is due to nonlinear interactions of the singularities in $\mcr\chi_D$, which is intimately related to the geometry of $D$. For example, for strictly convex objects, the artifacts are straight lines tangent to two boundary curves, see the line $L$ in Figure \ref{fig-art}. Using more precise notions of conormal distributions and paired Lagrangian distributions, Palacios, Uhlmann and Wang in \cite{PUW1} gave a quantitative analysis of the metal artifacts. Moreover, metal regions with piecewise strictly convex boundaries are addressed and the strength of the artifacts were obtained in \cite{PUW1}. We also mention the work \cite{BFJQ} in which artifacts from incomplete X-ray data are analyzed from the microlocal point of view.


In this paper, our goal is to study general metal object, especially non-strictly convex ones. 
When $\p D$ contains inflection points, we show that the sinogram  contains cusp points, see Figure \ref{fig-trans}. 
We show that the nonlinear interaction of cusp singularities generates new ones that lead  to the artifacts. Another motivation is that we would like to relax the regularity assumption in previous work \cite{PUW1}. While keeping $\chi_D$ in mind which is a classical conormal distribution, we replace it by $H^s$ based conormal distribution of finite orders, see Section \ref{sec-lag} for details. This allows us to use some techniques that originated in the study of  singularities of solutions of nonlinear wave equations, see Melrose-Ritter \cite{MR1, MR2}.  We believe that the method we develop will be helpful for understanding artifacts in other tomography methods, for example  the attenuated X-ray transform arising from SPECT, see Katsevich \cite{Kat}. 

\begin{figure}[htbp]
\centering
\includegraphics[scale =0.6]{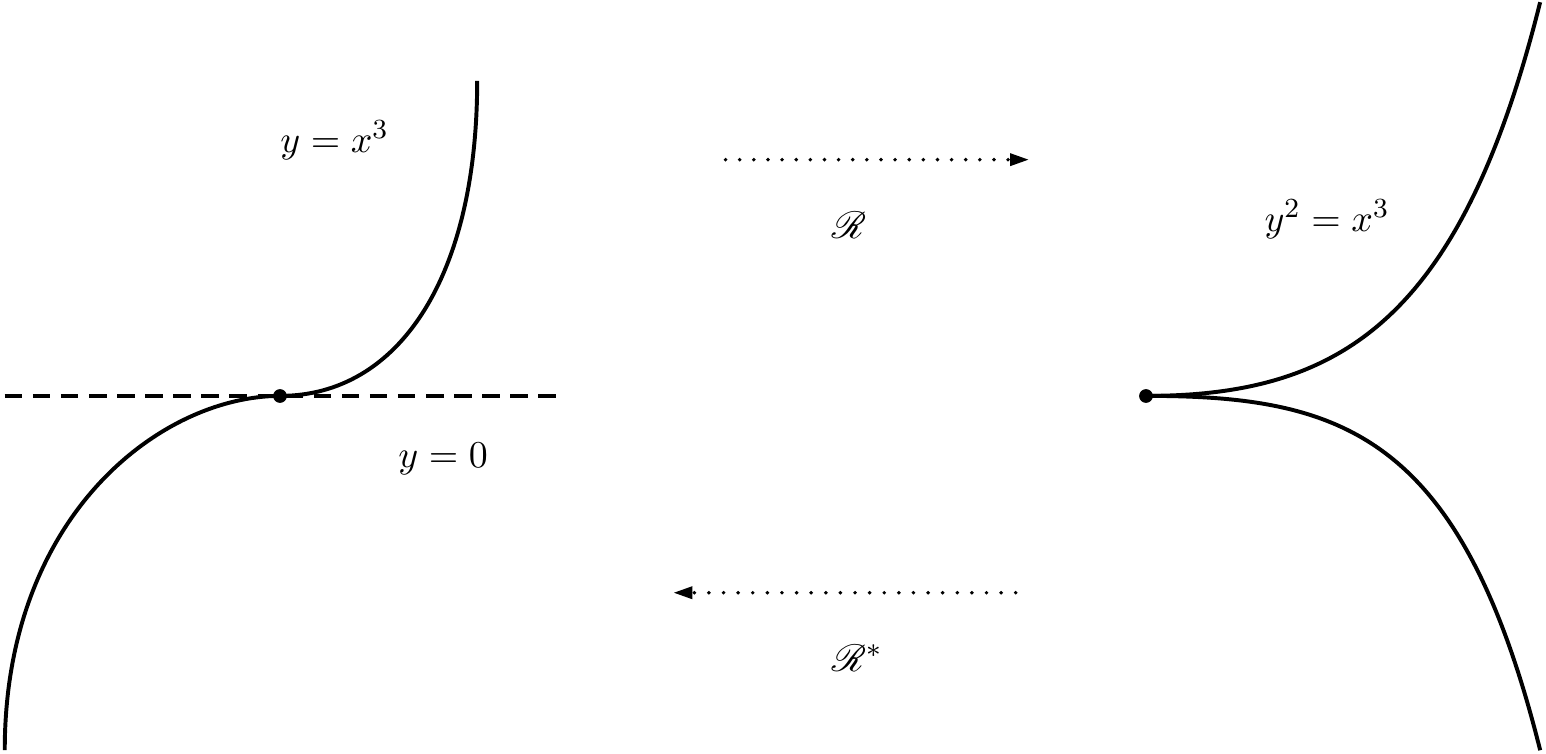}
\caption{Illustration of the transformation of inflection points in $\mbr^2$ to cusp points in sinogram. The nonlinear interaction produces singularities at the cusp point which transforms back to the tangent line at the inflection point.}
\label{fig-trans}
\end{figure}

The paper is organized as follows. We state the assumptions and main result of the paper in Section \ref{sec-lag}. 
In Section \ref{sec-cusp0}, we discuss some microlocal aspects of the Radon transform and study the connection of  inflection points and cusp points in sinogram. In Section \ref{sec-commute}, we prove some technical lemmas to help translate iterated regularity results between the physical and sinogram spaces via $\mcr$. We    discuss the nonlinear interactions in Section \ref{sec-non} and  finish the proof in Section \ref{sec-char}. Finally, we investigate the generation of new singularities at cusp points in Section \ref{sec-new}.

\section{Statement of the main result}\label{sec-lag}
In this work, we consider a simply connected bounded domain $D\subset \mbr^2$ with smooth boundary $\gamma \defeq \p D$. The case of multiple domains can be addressed similarly. The boundary $\gamma$ is a simple closed plane curve, see for instance \cite{Kl}. This means that $\gamma$ is a smooth parametrized curve ${\gamma: [a, b] \rightarrow \mbr^2}$ with no self-intersections such that $\gamma$ and all its derivatives agree at $a, b$.

We shall consider attenuation coefficients that have conormal type singularities to $\gamma$. So 
we start introducing the notion of such distributions, following Melrose-Ritter \cite{MR1}. Let $Y$ be a smooth embedded submanifold of a smooth  manifold $X$. We denote by $\mcv(Y)$ the Lie algebra of smooth vector fields tangent to $Y$. The space of $H^s$-based conormal distributions of order $k$ is defined as 
 \beqq\label{eq-IkHs}
 I_k H^s(X; \mcv(Y)) = \{u\in H^s(X): V_1V_2\cdots V_j u \in H^s(X), \forall j \leq k, V_j \in \mcv(Y)\}.
 \eeqq
We also use $L^2$-based conormal distributions and denote  $I_k H^0(X; \mcv(Y)) =   I_k L^2(X; \mcv(Y)) $. The set of  bounded elements is denoted by $L^\infty I_k L^2(X; \mcv(Y)) = L^\infty \cap  I_k L^2(X; \mcv(Y))$. These spaces were initially introduced  and are still widely used in the analysis of singularities of solutions to nonlinear wave equations, see Melrose-Ritter \cite{MR1, MR2}. In particular, the Gagliardo-Nirenberg inequalities of \cite{MR1} imply that $L^\infty I_k L^2(X; \mcv(Y))$ is a $C^\infty$ algebra, which means that if $u\in L^\infty I_k L^2(X; \mcv(Y))$ and $F \in C^\infty$, then $F(u) \in L^\infty I_k L^2(X; \mcv(Y)). $ We remark that when $k = \infty$, the space $I_\infty H^s(X; \mcv(Y))$ (i.e. $\cap_{k\in\mathbb{N}}{I_kH^s(X;\mcv(Y))}$)  consists of  classical conormal distributions, see H\"ormander \cite[Definition 18.2.6]{Ho3}, except that to match the order one should use Besov spaces instead of Sobolev spaces.

In the context of Theorem \ref{thm-main} we consider $f = \alpha\chi_D$, which is a classical conormal distribution; more generally we work with $f\in L^\infty I_k L^2(\mbr^2; \mcv(\gamma))$  with compact support.   We will show that for such $f$ the metal artifact $f_{MA}$ is a distribution conormal to $\gamma$ and certain tangent lines.  To state a clear result, we make the following assumptions on the geometry of $D$.  There is essentially no loss of generality. Our method would apply to the general case with some modifications which we remark on later in the proof. The assumptions are
\begin{enumerate}
\item[(A1)] The curvature $\kappa$ on $\gamma$ does not vanish on open sets of $\gamma.$ When $\kappa = 0$ at $p\in \gamma$, $p$ is a simple inflection point, see Section \ref{sec-cusp}. 
\item[(A2)] For any straight line $L$ tangent to $\p D$, $L$ is tangent to $\gamma$ at a finite order and $L$ is either tangent at at most two non-inflection points, or tangent at only one inflection point.  
\end{enumerate} 
We denote by $\mcl$ the (finite) set of tangent lines $L$ which are tangent either at two non-inflection points or one inflection point. 

The notion of $I_kH^s$ in \eqref{eq-IkHs} can be generalized to a collection of submanifolds. 
Suppose $S = \{Y_1,\dots,Y_n\}$ is a collection of smooth submanifolds of $X$. Let $\mcv(S) = \mcv(Y_1,\dots,Y_n)$ denote the Lie algebra of smooth vector fields tangent to each submanifold in $S$. Then we can define $I_kH^s(X;\mcv(S))$ similar to \eqref{eq-IkHs}:
 \beqq
 I_k H^s(X; \mcv(S)) = \{u\in H^s(X): V_1V_2\cdots V_j u \in H^s(X), \forall j \leq k, V_j \in \mcv(S)\}.
 \eeqq
Note that
\[u\in I_{\infty}H^s(X;\mcv(S))\implies \text{sing supp}(u)\subset\cup_{Y\in S}{Y}.\]
Associated with the tangent lines $L$, we introduce 
\beq
\mcd_k^s(\mcl)  = \sum_{L \in \mcl }  I_kH^s(\mbr^2; \mcv(L, \gamma)). 
\eeq
This is a space based on the conormal distribution space associated with $L$ and $\gamma$ among certain lines $L$ which intersect $\gamma$ tangentially, namely the lines in $\mcl$. 

The main result of the paper is 
\begin{theorem}\label{thm-main1}
Suppose $D$ is a simply connected bounded open domain in $\mbr^2$ with smooth boundary $\p D$ and satisfies assumptions (A1), (A2). Suppose $f\in L^\infty I_k L^2(\mbr^2; \mcv(\gamma))$ is compactly supported.  Then for any smooth function $F: \mbr\rightarrow \mbr$, we have
\beq
\mcr^*\mci^{-1} F(\mcr f) \in \mcd_k^{-\ha}(\mcl).
\eeq
In particular, 
away from $\gamma$, we have 
\beq
\mcr^*\mci^{-1} F(\mcr f) \in I_k H^{-\ha}(\mbr^2; \mcv(\mcl)).
\eeq
\end{theorem}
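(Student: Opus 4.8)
The plan is to track singularities through the composition $f \mapsto \mcr f \mapsto F(\mcr f) \mapsto \mcr^*\mci^{-1}F(\mcr f)$ using the $I_kH^s$ calculus. First I would use the microlocal analysis of $\mcr$ (to be developed in Section~\ref{sec-cusp0}) to show that if $f\in L^\infty I_k L^2(\mbr^2;\mcv(\gamma))$ then $\mcr f$ lies in an appropriate iterated-regularity space adapted to the image $\widehat\gamma$ of $\gamma$ in sinogram space: at generic (non-inflection, transversal) points $\widehat\gamma$ is a smooth hypersurface and $\mcr f$ is conormal there, while at parameter values corresponding to inflection points of $\gamma$, $\widehat\gamma$ develops a cusp, so the natural model is an $I_kH^s$ space associated to the Lie algebra of vector fields tangent to a cuspidal curve (or to the two smooth branches meeting tangentially, in the two-tangent case). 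Quantitatively, since $\mcr$ is an FIO of order $-\ha$ in $\mbr^2$, and $f\in L^2$-based conormal of order $k$, we get $\mcr f$ in an $H^{s'}$-based conormal space with $s' = 0 + \ha + \ha = 1$ (the extra $\ha$ from the dimension count/order of $\mcr$ as a smoothing operator); the commutator lemmas of Section~\ref{sec-commute} are what make ``conormal of order $k$'' transfer across $\mcr$.

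Next I would apply the nonlinear step. Because $L^\infty I_k L^2(X;\mcv(Y))$ is a $C^\infty$ algebra (Gagliardo--Nirenberg, Melrose--Ritter), $F(\mcr f)$ has the same iterated regularity as $\mcr f$ with respect to the sinogram geometry --- provided $\mcr f$ is in the $L^\infty$-based version of that space, which follows from $s'=1>\dim(\text{sinogram})/2$ locally, or more carefully from Sobolev embedding applied branch-by-branch. The crucial new phenomenon, analyzed in Section~\ref{sec-non}, is that multiplication/composition at a point where $\widehat\gamma$ is singular (a cusp, or two tangent smooth branches) produces singularities on \emph{new} Lagrangians: the conormal directions to the cuspidal/tangent configuration ``fill in'' extra covectors, and these are exactly the covectors that $\mcr^*$ will push forward to the lines $L\in\mcl$. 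So after the nonlinear step, $F(\mcr f)$ lies in an $I_kH^{s'}$ space associated to $\widehat\gamma$ \emph{together with} the sinogram curves dual to the tangent lines $L$.

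Finally I would push forward by $\mcr^*\mci^{-1}$. The operator $\mci^{-1}$ is (elliptic) of order $1$, and $\mcr^*$ is again an FIO of order $-\ha$, so the net effect on Sobolev order is $s' = 1 \mapsto 1 - 1 + \ha + \ha\cdot(\text{dimension shift})$; bookkeeping the orders of $\mcr$, $\mcr^*$, $\mci^{-1}$ against the dimension jump between $\mbr^2$ and the sinogram $\mbr\times S^1$ yields the final Sobolev exponent $-\ha$. Simultaneously I must check that $\mcr^*$ maps the sinogram conormal-curve associated to $L$ to the conormal bundle $N^*L$ (this is essentially the classical fact that $\mcr^*$ intertwines conormality to $\widehat\gamma$ with conormality to the tangent lines, extended to the cuspidal case via the two-sided tangency geometry), and that conormality to $\widehat\gamma$ away from the cusp gets pushed to conormality to $\gamma$. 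Assembling these, $\mcr^*\mci^{-1}F(\mcr f)\in\sum_{L\in\mcl} I_kH^{-\ha}(\mbr^2;\mcv(L,\gamma)) = \mcd_k^{-\ha}(\mcl)$, and restricting away from $\gamma$ gives the stated membership in $I_kH^{-\ha}(\mbr^2;\mcv(\mcl))$.

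The main obstacle is the nonlinear interaction at the cusp points: away from the inflection points, everything reduces to the now-standard paired-Lagrangian / conormal bookkeeping of \cite{PUW1}, but at a cusp the relevant geometry is not a pair of transversal or cleanly-intersecting Lagrangians, and one needs the Melrose--Ritter-style second-microlocal machinery to control which new covectors the product $F(\mcr f)$ charges and with what Sobolev strength. Getting the $H^s$ order $-\ha$ sharp at the cusp --- rather than merely some finite order --- is where the $L^2$-based $I_k$ formulation (as opposed to the classical $I_\infty$ one) earns its keep, and verifying that the Gagliardo--Nirenberg estimates survive the coordinate degeneration of the cuspidal model is the technical heart of the argument.
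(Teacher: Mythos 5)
Your overall architecture matches the paper's: push $f$ forward, use a $C^\infty$-algebra property in sinogram space to handle $F$, and push back with $\mcr^*\mci^{-1}$, with commutation lemmas transferring iterated regularity across each FIO. But your Sobolev bookkeeping has a concrete error that breaks the $L^\infty$ step. You claim $\mcr f$ lands in $H^{s'}$ with $s' = 1$; the actual gain for the Radon transform as an order $-\ha$ FIO with graph canonical relation between two $2$-manifolds is exactly $\ha$, so $s' = \ha$ (this is what Lemma \ref{iter-commute} gives). Your appeal to Sobolev embedding to get $L^\infty$ on the $2$-manifold $\mathcal{L}$ then fails: one needs $s' > 1$, and $\ha < 1$ (even $s' = 1$ would have been borderline insufficient). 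The paper avoids this entirely with a trivial observation: $\mcr$ of a compactly supported $L^\infty$ function is an integral over a bounded line segment, hence bounded, so $\mcr f \in L^\infty$ directly. Moreover, the paper then \emph{drops} to $L^2$-based iterated regularity before applying the $C^\infty$-algebra machinery (see Remark \ref{sob-order}), because the Melrose--Ritter/S\'a Barreto algebra results are stated for $L^2$; your proposal keeps a fictitious $s' = 1$ through the nonlinear step, which is not justified.

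The second gap is structural and concerns the cusp. You say "second-microlocal machinery" is needed to control new covectors, which is correct in spirit, but you do not identify the specific enlargement, and the plain cusp-conormal space $L^\infty I_k L^2(\mathcal{L};\mcm(\La_G))$ is provably \emph{not} a $C^\infty$ algebra (Lemma \ref{new-sing}, proved via Zworski's Fourier estimate): squaring a cusp-conormal distribution charges covectors over the cusp point $B$ that lie off $\La_G$. One must enlarge to the marked Lagrangian space $J_k(\mathcal{L};G) = I_kL^2(\mcm(\La_G,\Sigma)) + I_kL^2(\mcm(\La_B,\Sigma))$ with $\La_B = N^*B$ and $\Sigma = \La_G\cap\La_B$, which Melrose and S\'a Barreto showed is an algebra via resolution blow-up. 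Without committing to this enlargement there is simply no algebra in which $\mcr f$ and $F(\mcr f)$ both live near a cusp point, so the nonlinear step does not close. The paper then encodes both this and the transversal-intersection spaces into a single globally defined algebra $\mca$ before applying Lemmas \ref{transverse-backproj}, \ref{cusp-backproj}, and \ref{overall-backproj} to push back; your argument needs the analogue of that global assembly as well.
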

To obtain Theorem \ref{thm-main}, we notice that $f_{MA} = \mcr^*\mci^{-1} F(\mcr f)$ for $F(x) = -\ln(\sinh(\eps x)/\eps x)$ and $f = \alpha\chi_D \in I_{\infty}L^2(\mbr^2; \mcv(\gamma))$. 
It follows that, away from $\gamma,$ we have $f_{MA} \in I_\infty H^{-\ha}(\mbr; \mcv(\mcl))$. Hence the singular support of $f_{MA}$ away from $\gamma$ is contained in $\cup_{L\in\mcl}{L}$, and thus overall we have $\text{sing supp}(f_{MA})\subset(\cup_{L\in\mcl}{L})\cup\gamma$.

Note that the loss of $\ha$ order is due to using the $C^{\infty}$-algebra properties of $L^{\infty}I_kL^2$ and can likely be improved; see Remark \ref{sob-order}. On a related topic, it is worth mentioning that the metal artifact reduction method proposed in \cite{PUW1} should apply to the current setting. However, to justify the reduction effect, one needs to work harder to find the precise order of the artifacts which we do not pursue here.

  \section{Generation of cusp points from inflection points}\label{sec-cusp0}
	
In this section, we review some basic properties about the Radon transform, including its canonical relation, and apply the result to see how a cusp point in the sinogram arises from an inflection point in physical space. 
We recall that the Radon transform is usually defined as an operator $\mcr: C_0^\infty(\mbr^2)\rightarrow C^\infty(M)$ with $M \defeq \mathbb{R}\times\mathbb{S}^1$ by
\[
\mcr f(s, \theta) = \int_{x\cdot \theta = s} f(x)\, d\mathcal{H}^1(x), \quad x \in \mbr^2 
\]
where $d\mathcal{H}^1(x)$ is the 1-dimensional Hausdorff measure on the line $\{x\cdot \theta = s\}$. Here, $s\in \mbr, \theta\in \mbs^1$ and we can parametrize $\mbs^1$ as $\theta = (\cos \phi, \sin \phi), \phi \in (-\pi, \pi]$. From this definition one can derive many well-known properties of the Radon transform; see \cite{Quinto} for examples. 

Under this definition, the Radon transform satisfies the symmetry property $\mcr f(s,\theta) = \mcr f(-s,-\theta)$ stemming from the fact that the lines $\{x\cdot\theta = s\}$ and $\{x\cdot(-\theta) = -s\}$ are the same line. This suggests that one can instead directly define the Radon transform to map into the space of functions on the space $\mathcal{L}$ of lines in $\mbr^2$, topologized appropriately. In fact, for later arguments in Section \ref{sec-commute} (especially regarding the existence of parametrices) it will be more convenient for us to define $\mcr$ \emph{directly on this space of lines $\mathcal{L}$} (which we also call the ``sinogram space''), which will be a quotient of $M$. We discuss this viewpoint in Section \ref{sec-radon} and review the well-known properties of the Radon transform once redefined with respect to $\mathcal{L}$ before showing how this connects inflection points and cusps in Section \ref{sec-cusp}.
	
 \subsection{Radon transform preliminaries}\label{sec-radon} 

We begin by considering the set $\mathcal{L}$ of lines in $\mbr^2$. For each line $L\in\mathcal{L}$ we can write it in the form $L=\{x\in\mbr^2\,:\,x\cdot\theta = s\}$ for some $s\in\mathbb{R}$ and $\theta\in\mathbb{S}^1$; thus $\theta$ is a unit normal vector to the line, and $s$ is the (signed) distance from the origin with respect to this choice of $\theta$. Since $(s,\theta)$ and $(-s,-\theta)$ parametrize the same line, we can identify $\mathcal{L} = (\mathbb{R}\times\mathbb{S}^1)/\sim$ where $(s,\theta)\sim(-s,-\theta)$. Since $\mathbb{S}^1$ is itself a quotient, we can also identify this with $(\mathbb{R}_s\times\mathbb{R}_{\phi})/\sim_{\mathcal{L}}$ (with $\phi$ denoting the angle), where $(s,\phi)\sim_{\mathcal{L}}(s',\phi')$ iff $\phi'-\phi\in\pi\mathbb{Z}$ and $s' = (-1)^{(\phi'-\phi)/\pi}s$. Note that the projection $p:\mathbb{R}^2\rightarrow\mathcal{L}$ is a covering map, and the corresponding deck transformations are given by $f_k(s,\phi)=((-1)^ks,\phi+k\pi)$ where $k\in\mathbb{Z}$. Note that
\[(f_k)^*\left((\sigma\,ds+\eta\,d\phi)|_{f_k(s,\phi)}\right) = ((-1)^k\sigma\,ds+\eta\,d\phi)|_{(s,\phi)},\]
and for every $[(s,\phi)]\in\mathcal{L}$ the fiber of $T_{[(s,\phi)]}^*\mathcal{L}$ is isomorphic via pullback by $p$ to the fiber $T_{(s,\phi)}^*\mathbb{R}^2$ of any preimage $(s,\phi)$, with the isomorphisms consistent with the pullback property of the deck transformations above. It follows that we can identify $T^*\mathcal{L} = T^*\mathbb{R}^2/\sim_{T^*\mathcal{L}}$, where if we represent $(\sigma\,ds+\eta\,d\phi)|_{(s,\phi)}\in T^*\mathbb{R}^2$ by the coordinates $(s,\phi,\sigma,\eta)$, we have $(s,\phi,\sigma,\eta)\sim_{T^*\mathcal{L}}(s',\phi',\sigma',\eta')$ iff $(s,\phi)\sim_{\mathcal{L}}(s',\phi')$, $\sigma = (-1)^{(\phi'-\phi)/\pi}\sigma'$, and $\eta=\eta'$. Given these identifications, we will use $(s,\phi)$ and $(s,\phi,\sigma,\eta)$ as (local) coordinates for $\mathcal{L}$ and $T^*\mathcal{L}$, respectively.


Having established the space of lines $\mathcal{L}$, we now redefine the Radon transform with respect to it:
\begin{definition}
The Radon transform is the operator $\mcr :C_c^{\infty}(\mathbb{R}^2)\rightarrow C_c^{\infty}(\mathcal{L})$  defined by 
\[\mcr f(L) = \int_{L}{f(x)\,d\mathcal{H}^1(x)}.\] 
\end{definition}
This is essentially the same definition as the definition at the beginning of this section, except the operator now maps into the space of functions on $\mathcal{L}$ instead of functions on $M$.

With respect to the coordinates $(s,\phi)$ on $\mathcal{L}$ and $x$ on $\mathbb{R}^2$, its Schwartz kernel can be written as
\[K(s,\phi,x) = \delta(s-x\cdot \theta_{\phi}) = (2\pi)^{-1}\int_{\mathbb{R}}{e^{i(s-x\cdot \theta_{\phi})\lambda}\,d\lambda},\]
where $\theta_{\phi} = (\cos\phi,\sin\phi)$. Viewed as an operator to functions on $\mathbb{R}^2$, this is a Fourier Integral Operator associated to the 
 canonical relation
\begin{equation}
\label{can-rel} 
\begin{aligned}
C &= \{((s,\phi;\lambda,-\lambda x\cdot \theta_{\phi}^{\perp}),(x;\lambda \theta_{\phi}))\,:\,x\in\mathbb{R}^2, s = x\cdot \theta_{\phi},\lambda\in\mathbb{R}\backslash\{0\}\} \\
&=\{((s,\phi;\sigma,\eta),(x;\xi))\,:\,s = x\cdot \theta_{\phi}, \eta = -\sigma x\cdot \theta_{\phi}^{\perp},\xi = \sigma \theta_{\phi},\sigma\in\mathbb{R}\backslash\{0\}\} \\
&=\{((x\cdot \theta_{\phi},\phi;\sigma,-\sigma x\cdot\theta_{\phi}^{\perp}),(x;\sigma \theta_{\phi}))\,:\,x\in\mathbb{R}^2,\phi\in\mathbb{R},\sigma\in\mathbb{R}\backslash\{0\}\}
\end{aligned}
\end{equation}
where $\theta_{\phi}^{\perp} = (-\sin\phi,\cos\phi)$. Note that this is invariant under the deck transformation pullbacks $(f_k)^*$ (acting on the variables $(s,\phi;\sigma,\eta)$) and can thus indeed be identified as a subset of $T^*(\mathcal{L}\times\mbr^2)$. In H\"ormander's notation, $\mcr $ is an Fourier integral operator from $\mbr^2$ to $\mathcal{L}$, denoted by $\mcr \in {I^{-\ha}(\mathcal{L}\times \mbr^2, C)}$.

Furthermore, we have:
\begin{lemma}
The left and right projections $\pi_L: C\rightarrow T^*\mathcal{L}\backslash 0$ and $\pi_R: C\rightarrow T^*\mathbb{R}^2\backslash 0$ are bijective.
\end{lemma}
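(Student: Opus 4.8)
The plan is to use the explicit parametrization of $C$ furnished by \eqref{can-rel}. Write
\[
\Phi:\ (x,\phi,\sigma)\ \longmapsto\ \bigl((x\cdot\theta_{\phi},\phi;\sigma,-\sigma x\cdot\theta_{\phi}^{\perp}),\,(x;\sigma\theta_{\phi})\bigr),\qquad x\in\mbr^2,\ \phi\in\mbr,\ \sigma\in\mbr\setminus\{0\}.
\]
This maps onto $C$, and (as already noted below \eqref{can-rel}) it is invariant under the $\mbz$-action $(x,\phi,\sigma)\mapsto(x,\phi+k\pi,(-1)^k\sigma)$, hence descends to the quotient; the descended map is a bijection onto $C$, injectivity being immediate since the $\mbr^2$-component of $\Phi(x,\phi,\sigma)$ recovers $x$ and $\sigma\theta_{\phi}$, and the latter pins $(\phi,\sigma)$ down to exactly one $\mbz$-orbit. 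So $(x,\phi,\sigma)$ modulo this action are coordinates on $C$, and for each projection the task reduces to solving for these coordinates in terms of the target datum and checking the answer is independent of the chosen representative.

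For $\pi_R$: the target datum is $(x;\xi)$ with $\xi = \sigma\theta_{\phi}\neq 0$, so $x$ is read off directly, and the equation $\sigma\theta_{\phi} = \xi$ with $\sigma\ne 0$ and $\theta_{\phi}$ a unit vector has solution set consisting of a single $\mbz$-orbit, e.g.\ generated by $(\phi_0,|\xi|)$ with $\theta_{\phi_0}=\xi/|\xi|$. Hence there is exactly one point of $C$ over $(x;\xi)$, its first component being $(x\cdot\theta_{\phi},\phi;\sigma,-\sigma x\cdot\theta_{\phi}^{\perp})$ (which is orbit-invariant, i.e.\ well defined in $T^*\mathcal{L}$), and such a preimage exists for every nonzero $\xi$. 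Thus $\pi_R$ is a bijection onto all of $T^*\mbr^2\setminus 0$ --- the analytic shadow of the fact that the Radon transform detects every singularity of a compactly supported distribution.

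For $\pi_L$: the target datum is $(s,\phi;\sigma,\eta)$ with $\sigma\ne 0$, and since $\{\theta_{\phi},\theta_{\phi}^{\perp}\}$ is an orthonormal frame and a point of $C$ over it satisfies $x\cdot\theta_{\phi}=s$, $x\cdot\theta_{\phi}^{\perp}=-\eta/\sigma$, one recovers $x = s\,\theta_{\phi}-(\eta/\sigma)\,\theta_{\phi}^{\perp}$. A one-line computation using $\theta_{\phi+k\pi}=(-1)^k\theta_{\phi}$ and $\theta_{\phi+k\pi}^{\perp}=(-1)^k\theta_{\phi}^{\perp}$ shows this $x$ is unchanged under $(s,\phi,\sigma,\eta)\mapsto((-1)^ks,\phi+k\pi,(-1)^k\sigma,\eta)$, so $(x,\phi,\sigma)$ is a well-defined, and obviously unique, preimage. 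The only caveat is that the image of $\pi_L$ is the open dense set $\{\sigma\ne 0\}$ rather than all of $T^*\mathcal{L}\setminus 0$ (no point of $C$ has $\sigma=0$); the covectors with $\sigma=0,\ \eta\ne0$ lie outside $\pi_L(C)$ and hence never occur in $\WF(\mcr f)$ for $f\in\mce'(\mbr^2)$, so one works over $\{\sigma\ne0\}$ throughout (as is standard for the Radon FIO), and with that understood $\pi_L$ is a bijection as well.

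All the computations are elementary linear algebra; the one place needing care --- and the main thing I would watch --- is the bookkeeping around the two quotients, namely the deck-transformation $\mbz$-action defining $\mathcal{L}$ from $M$ and its lift to $T^*\mathcal{L}$ recorded before the definition of $\mcr$: one must check that the several representatives appearing when inverting $\pi_R$ really do collapse to a single point of $C$, and that the formula for $\pi_L^{-1}$ is representative-independent. Once these compatibilities are verified, bijectivity of both projections follows by direct substitution into \eqref{can-rel}.
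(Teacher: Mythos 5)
Your proof is correct and follows essentially the same approach as the paper's: you solve explicitly for the preimages of $\pi_R$ and $\pi_L$ from the equations defining $C$ in \eqref{can-rel}, and you carefully track the deck-transformation equivalence, which is exactly the point the paper's argument also hinges on (the ``ambiguities ... are identified under the equivalence relation $\sim_{T^*\mathcal{L}}$''). Your remark that $\pi_L(C)=\{\sigma\ne 0\}$ rather than all of $T^*\mathcal{L}\setminus 0$ is accurate and matches the paper's own follow-up sentence identifying $C$ with a bijection onto $T^*\mathcal{L}\setminus\{\sigma=0\}$.
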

This should be compared to the usual situation (see e.g. \cite{Quinto}) where $\pi_R$ is two-to-one (essentially due to the two-to-one identification of $M$ to $\mathcal{L}$).
\begin{proof}
Given $(s,\phi;\sigma,\eta)$ with $(\sigma,\eta)\ne(0,0)$, if $(x;\xi)$ satisfies $((s,\phi;\sigma,\eta),(x;\xi))\in C$, then necessarily $\sigma\ne 0$ (otherwise $\eta = -\sigma x\cdot \theta_{\phi}^{\perp}=0$). 
We must also have $\xi = \sigma\theta_{\phi}$, and the two equations $s = x\cdot \theta_{\phi}$ and $\eta = -\sigma x\cdot \theta_{\phi}^{\perp}$ combine to force $x = s\theta_{\phi}-\frac{\eta}{\sigma}\theta_{\phi}^{\perp}$. Thus, $(x;\xi)$ is uniquely determined by $(s,\phi;\sigma,\eta)$ if $((s,\phi;\sigma,\eta),(x;\xi))\in C$. 
Conversely, given $(x;\xi)$ with $\xi\ne 0$, from the equation $\xi=\sigma\theta_{\phi}$ we must have $|\sigma| = |\xi|$, and hence $\sigma = \pm|\xi|\implies \theta_{\phi} = \pm\frac{\xi}{|\xi|}$. For each choice of sign this determines $\phi$ up to $2\pi\mathbb{Z}$; moreover this also gives $s = \pm\frac{x\cdot\xi}{|\xi|}$ and $\eta = -x\cdot\xi^{\perp}$, where $\xi^{\perp} = (-\xi_2,\xi_1)$. The ambiguities in obtaining these solutions (i.e. choice of sign of $\sigma$ and choice of $\phi$) are identified under the equivalence relation $\sim_{T^*\mathcal{L}}$, i.e. all of these solutions (in $T^*\mathbb{R}^2$) correspond to the same element in $T^*\mathcal{L}$.
\end{proof}

The above injectivity argument also shows that the canonical relation can be viewed as a bijective function $C:T^*\mathbb{R}^2\backslash 0\rightarrow T^*\mathcal{L}\backslash\{\sigma = 0\}$, with
\[C(x,\xi) = \left[\left(\frac{x\cdot\xi}{|\xi|},\text{arg }\xi,|\xi|,-x\cdot\xi^{\perp}\right)\right]\]
where $\text{arg }\xi$ denotes the angle corresponding to the nonzero vector $\xi\in\mbr^2$, and
\[C^{-1}([s,\phi,\sigma,\eta]) = \left(s\cdot \theta_{\phi} - \frac{\eta}{\sigma}\theta_{\phi}^{\perp}, \sigma \theta_{\phi}\right).\]
The above argument also implies that
\[C^*\circ C\subset\Delta_{T^*\mathbb{R}^2\backslash 0}\subset(T^*\mathbb{R}^2\backslash 0)^2\quad\text{and}\quad C\circ C^*\subset\Delta_{T^*\mathcal{L}\backslash 0}\subset(T^*\mathcal{L}\backslash 0)^2\]
where $\Delta$ denotes the diagonal and $C^*$ is the transpose relation of $ C$ (corresponding to the inverse map $C^{-1}$ above).

The transpose relation will come up in considering an adjoint for $\mcr$. On $\mathcal{L}$ we can give a density $|ds\wedge d\phi|$ (possibly normalized say to $\frac{1}{\pi}|ds\wedge d\phi|$), in which case the adjoint $\mathscr{R}^*:C^{\infty}(\mathcal{L})\rightarrow C^{\infty}(\mathbb{R}^2)$ is given up to a multiplicative constant by
\[\mathscr{R}^*g(x) = \int_0^{\pi}{g([x\cdot \theta_\phi,\phi])\,d\phi}.\]
Note that $\mcr^*$ is then an FIO associated to the transpose relation $C^*$. Thus in the double fibration picture, see Guillemin \cite{Gu}, 
 \begin{center} 
\begin{tikzpicture}
  \matrix (m) [matrix of math nodes,row sep=1.5em,column sep=1em,minimum width=1em]
  {
      & C & \\
     T^*\mbr^2 &  & T^*\mathcal{L} \\};
  \path[-stealth]
    (m-1-2) edge node [left] {$\pi_L$} (m-2-1)
    (m-1-2) edge node [right] {$\pi_R$} (m-2-3);
\end{tikzpicture}
\end{center}
we have that the projections $\pi_L$ and $\pi_R$ are injective immersions.
So the double fibration satisfies the Bolker condition as interpreted in  ($A'$)  of \cite{Gu}. 
 The composition $\mcr^*\mcr$ is a pseudo-differential operator of order $-1$. As a result, we obtain from standard $L^2$ estimate of pseudo-differential operators that $\mcr: H^s(\mbr^2)\rightarrow H^{s+ \ha}(\mathcal{L})$ is bounded. 
 
We now consider functions $f$ with conormal singularities to the boundary $\gamma$ and see how the Radon transform transforms the singularities.  
Consider the conormal bundle $N^*\gamma.$ 
Let $\La  = C\circ N^*\gamma$. 
 It is known that if $D$ is a strictly convex domain, 
 then $\La = N^*S$ where $S$ is a one dimensional submanifold of $M$. This can be seen explicitly as follows. Let $\gamma(t) = (x_1(t), x_2(t)), t\in [a, b]$  and $t$ be the arc-length parameter. With respect to the frame determined by the coordinate system,  the curvature is given by $\kappa(t) = \dot x_1(t) \ddot x_2(t) - \dot x_2(t) \ddot x_1(t).$ Note that the curvature is not always non-negative. We recall that $\gamma$ is convex if and only if $\kappa(t)\geq 0$ or $\kappa(t) \leq 0$ on $[a, b]$, see \cite{Kl}. Also, $\gamma$ is strictly convex if $\kappa(t)\neq 0.$ 
 Using the parametrization of $\gamma$, we have  that 
 \beq
 N^*\gamma = \{(x_1(t), x_2(t), \xi_1(t), \xi_2(t)) \in T^*\mbr^2  :  \xi(t)\cdot \dot \gamma(t) = 0\}. 
 \eeq
From the second line of \eqref{can-rel}, we have that if $(s(t),\phi(t),\sigma(t),\eta(t)) = C(x(t),\xi(t))$ with $(x(t),\xi(t))\in N^*\gamma$, then
\[\xi(t) = \sigma(t)\theta_{\phi(t)}\implies \dot x_1(t)\cos\phi(t) + \dot x_2(t)\sin\phi(t) = 0\implies \tan\phi(t) = -\frac{\dot x_1(t)}{\dot x_2(t)}\]
since $\xi(t)\cdot\dot\gamma(t) = 0$. In particular, we have $(\cos\phi(t),\sin\phi(t)) = \pm (-\dot x_2(t),\dot x_1(t))$, as well as the fact that
\begin{equation}
\label{tang-line}
\text{the line }\{x\cdot\theta_{\phi(t)} = s(t)\}\text{ is tangent to }\gamma\text{ at }x(t).
\end{equation}
Furthermore, we have $s(t) = x_1(t)\cos\phi(t) + x_2(t)\sin\phi(t)$. Thus, let $S$ be the curve in $\mathcal{L}$ defined by
 \beq
\label{l-curve}
 S = \{[(s(t), \phi(t))] \in \mathcal{L} : s(t) =  x_1(t)\cos \phi(t) + x_2(t) \sin\phi(t), 
  \tan\phi(t)  = -\dot x_1(t)/\dot x_2(t), t \in [a, b]\}.
 \eeq
(Notice that if we defined $S$ as a subset of $M$ instead, then $S$ would be the union of two disjoint curves, which are identified as the same under $\sim$ in $\mathcal{L}.$ Furthermore, note that this definition remains invariant even if we choose a non-unit-speed parametrization $\gamma(t)$ for the curve.) 

For $(s(t),\phi(t))$ defined in the equations in \eqref{l-curve}, we have
$\dot\phi(t) = -\ddot x_1(t)\dot x_2(t) + \ddot x_2(t) \dot x_1(t) = \kappa(t)$ and $\dot s(t) = (-x_1(t)\sin\phi(t)  + x_2(t) \cos\phi(t) )\kappa(t) $.  If $\kappa(t)  = 0$, we see that $\dot s(t)  = \dot \phi(t)  = 0$ and this is where the curve $S$ may not be smooth. Otherwise, if $\kappa(t)\ne 0$, then $\dot\phi(t)\ne 0$. Thus, if there are no inflection points, then $S$ is a smoothly embedded curve, and in fact if $D$ is strictly convex then $S$ has no self-intersections, since a self-intersection corresponds to a straight line being tangent to $\gamma$ at two different points, which would be impossible for $D$ strictly convex.

We claim that $C\circ N^*\gamma = N^*S \backslash 0$. Indeed, we have
 \beqq
\label{canon-comp}
 \begin{gathered}
 C\circ N^*\gamma = \{[(s(t), \phi(t), \sigma(t), \eta(t))] \in T^*\mathcal{L}\backslash 0 : s(t) =  x_1(t)\cos \phi(t) + x_2(t) \sin\phi(t), \\
 \sigma(t)\theta_{\phi(t)}\cdot\dot\gamma(t) = 0, 
 \eta(t) = \sigma(t)(x_1(t)\sin\phi(t)-x_2(t)\cos\phi(t)), \sigma(t)\in\mathbb{R}\backslash\{0\} \}.
 \end{gathered}
 \eeqq
It is straightforward to verify that $\sigma(t)\dot s(t) + \eta(t)\dot\phi(t) = 0$ in the parametrization above. Since $[(s(t),\phi(t))]$ is a smooth parametrization of $S$, this implies that $[(s(t),\phi(t),\sigma(t),\eta(t))]\in N_{[(s(t),\phi(t)]}^*S$. On the other hand, since $S$ is a codimension one curve, it follows that $N^*S$ has one-dimensional fibers, and hence the above parametrization parametrizes all of $N^*S\backslash 0$ since $\sigma(t)$ can take on arbitrary real nonzero values.
 
 \subsection{Generation of cusp points}\label{sec-cusp}
Consider a non-strictly convex domain $D$. Assume that the boundary  is parametrized by $\gamma(t), t\in [a, b]$.  
Let $\kappa(t)$ be the curvature on $\gamma(t)$. By (A1), there are finitely many inflection points $p_i = \gamma(t_i),  t_i \in [a, b], i = 1, \cdots, N$. 
We split the curve $\gamma$ into disjoint unions of strictly convex curves. Let $\gamma_i = \gamma(t)|_{(t_i, t_{i+1})}, i = 1, 2, \cdots, N$. On each $\gamma_i$, the curvature $\kappa$ is nowhere vanishing. 
 Now we examine what happens at  $p_i$, that is when $\kappa = 0.$ Locally near $p_i$, we choose a parametrization $\gamma(t)$ and make an affine change of coordinates such that $\gamma(0) = p_i$, $x_1(t) = t$ for $t\in (-\delta, \delta)$ with $\delta > 0$ sufficiently small, and $\dot x_2(0) = 0$. Then 
 \beq
 \kappa(t) =  \dot x_1(t) \ddot x_2 (t) - \dot x_2 (t) \ddot x_1(t) =  \ddot x_2(t)
 \eeq
 vanishes at $t = 0$. Thus we can assume that $x_2(t) = t^3 h(t)$ where $h(t)$ is a smooth function on $(-\delta, \delta)$. We write $h(t) = \sum_{n = 0}^\infty h_n t^n$ as a Taylor expansion so that formally 
 \beqq\label{eq-curexp}
 \kappa(t) = \sum_{n = 1}^\infty \kappa_nt^n,\quad \kappa_n=(n+2)(n+1)h_{n-1}\text{ for }n\ge 1.
 \eeqq
 If $p_i$ is an inflection point, then $\kappa(t)$ changes sign across $t = 0.$ We call $p_i$ an inflection point of order $k$ for some $k$ odd if $\kappa_i = 0, i < k$ and $\kappa_k\neq 0$. The simplest case is when $x_2(t) = t^3 h(t), h(0) \neq 0$ (so that $p_i$ is an inflection point of order $1$), and this is called a simple inflection point. We analyze this case in this section but remark that the treatment for higher order cases are similar. 
 
 Without loss of generality, we assume that $h(t) > 0. $ Consider 
\[
\gamma = \{(x_1(t), x_2(t)) \in \mbr^2 : x_1(t) = t, x_2(t) = t^3h(t), h(t) > 0, t\in (-\delta, \delta)\}
\]
We denote by $\gamma_\pm$ the pieces of $\gamma$ where $\pm t > 0.$ Each $\gamma_\pm$ is strictly convex, hence we know that 
$C\circ N^*\gamma_\pm = N^*S_\pm\backslash 0$ and 
 \beq
 \begin{gathered}
 S_\pm = \left\{[(s(t), \phi(t))]\in \mathcal{L}: s =  t\cos \phi + t^3 h(t) \sin\phi, 
  \phi = \arctan\left(-\frac{1}{3t^2 h(t) + t^3 h'(t)}\right), \pm t > 0\right\}
\end{gathered}
 \eeq
Note in the parametrization above that as $\pm t\to 0^+$, we have $s\to 0$ and $\phi\to(\frac{\pi}{2})^+$. We now show:
\begin{lemma}
\label{cusp-sinogram}
$S_+ \cup S_-$ form a cusp at $(s,\phi) = (0,\pi/2)$, and furthermore $C\circ N^*\gamma$ is the closure of $N^*S_+\cup N^*S_-$.
\end{lemma}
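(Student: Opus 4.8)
The plan is to establish the two assertions separately: the cusp claim by an explicit asymptotic analysis of the parametrization of $S_\pm$ near $t=0$, and the Lagrangian identity by functoriality of the diffeomorphism $C$ from Section \ref{sec-radon}.

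For the cusp, I would first change variables to measure the deviation of $(s(t),\phi(t))$ from the reference point $(0,\pi/2)$. Writing $\dot x_2(t)=3t^2h(t)+t^3h'(t)=t^2g(t)$ with $g(t)=3h(t)+th'(t)$ smooth and $g(0)=3h(0)>0$, the relation $\tan\phi(t)=-1/(t^2g(t))$ forces $\phi(t)=\tfrac{\pi}{2}+\psi(t)$ with $\tan\psi(t)=t^2g(t)$; since $\arctan$ is smooth and vanishes to first order, $\psi(t)=t^2G(t)$ with $G$ smooth and $G(0)=3h(0)>0$. Plugging $\cos\phi=-\sin\psi$, $\sin\phi=\cos\psi$ into $s(t)=t\cos\phi(t)+t^3h(t)\sin\phi(t)$ and writing $\sin\psi(t)=t^2\widetilde G(t)$ with $\widetilde G(0)=G(0)$, I get $s(t)=t^3S_1(t)$ with $S_1$ smooth and $S_1(0)=h(0)-3h(0)=-2h(0)\ne0$. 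So in the coordinates $(s,\phi-\tfrac{\pi}{2})$ at the cusp point, $S_+\cup S_-\cup\{(0,\tfrac{\pi}{2})\}$ is the image of the smooth curve $t\mapsto(t^3S_1(t),\,t^2G(t))$ with $S_1(0)G(0)\ne0$. To conclude that this is an ordinary cusp I would reparametrize by $\tau=t\sqrt{G(t)}$ (a local diffeomorphism since $G(0)>0$), which makes $\phi-\tfrac{\pi}{2}=\tau^2$ exactly and $s=\tau^3C(\tau)$ with $C$ smooth, $C(0)\ne0$; then an even/odd splitting of $C$ in $\tau$ absorbs a smooth function of $\phi-\tfrac{\pi}{2}$ into a shifted coordinate $\bar s$, and a final smooth positive rescaling of the $(\phi-\tfrac{\pi}{2})$-axis brings the image into the normal form $\{\bar s^2=(\phi-\tfrac{\pi}{2})^3\}$ near the origin. (Alternatively one can just cite the classification of simple planar curve singularities.)

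For the Lagrangian identity, the point is that $C$ is a diffeomorphism $T^*\mbr^2\setminus0\to T^*\mathcal{L}\setminus\{\sigma=0\}$, hence commutes with closures. Since $\gamma=\gamma_+\sqcup\{p\}\sqcup\gamma_-$ with $p$ the inflection point, and $N^*\gamma\setminus0$ is locally the smooth $2$-manifold $\{(\gamma(t),\sigma(-t^2g(t),1)):|t|<\delta,\ \sigma\ne0\}$, the set $N^*\gamma\setminus0$ is the closure of $(N^*\gamma_+\setminus0)\cup(N^*\gamma_-\setminus0)$ in $T^*\mbr^2\setminus0$. Applying $C$ and using $C\circ N^*\gamma_\pm=N^*S_\pm\setminus0$ yields
\[C\circ N^*\gamma=C\big(\overline{(N^*\gamma_+\setminus0)\cup(N^*\gamma_-\setminus0)}\big)=\overline{(N^*S_+\setminus0)\cup(N^*S_-\setminus0)},\]
the closure now being taken in $T^*\mathcal{L}\setminus\{\sigma=0\}$. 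Finally, along $N^*S_\pm$ one has $\eta(t)=\sigma\,(x_1(t)\sin\phi(t)-x_2(t)\cos\phi(t))$ with the parenthesized factor bounded near $p$ (in fact $\to0$), so no limit point with $\sigma=0$, $\eta\ne0$ can occur; thus this closure coincides with the closure in $T^*\mathcal{L}\setminus0$, giving $C\circ N^*\gamma=\overline{N^*S_+\cup N^*S_-}$. Tracking the same limits also identifies the points added in the closure as exactly the covectors $[(0,\tfrac{\pi}{2},\sigma,0)]$, $\sigma\ne0$, i.e. $C(N^*_p\gamma\setminus0)$.

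I expect the main obstacle to be the one genuinely delicate step in the cusp part: turning ``$S_1(0),G(0)\ne0$'' into the coordinate-free conclusion that the curve has an ordinary cusp rather than a higher singularity. The non-vanishing of $S_1(0)$ and $G(0)$ pins the vanishing orders at exactly $3$ and $2$, but promoting this to the normal form requires the even/odd decomposition and reparametrization above (or the classification of $A_2$ singularities). Everything else --- the asymptotic formulas for $s(t),\phi(t)$ and the closure computation --- is routine once the diffeomorphism property of $C$ from Section \ref{sec-radon} is in hand.
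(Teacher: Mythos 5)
Your proof is correct, and it takes a route that is broadly similar in strategy but noticeably cleaner in execution than the paper's. For the cusp, both proofs introduce a rescaled parameter (your $\tau = t\sqrt{G(t)}$, the paper's $w = t(h(t)+th'(t)/3)^{1/2}$) to normalize the angle coordinate; the decisive difference is the choice of the second coordinate. You keep $\phi-\pi/2$, which makes $\phi-\pi/2 = \tau^2$ hold \emph{exactly} and yields $s = \tau^3 C(\tau)$ with $C$ genuinely smooth and $C(0)\ne 0$; you then finish by an even/odd split in $\tau$ to reach the $A_2$ normal form $\bar s^2 = (\phi-\pi/2)^3$. The paper instead changes variables to $z = -\cos\phi$ and derives $s = \pm H(z)z^{3/2}$ on $S_\pm$, where $H$ is only asserted to be continuous at $z=0$ (it is really smooth in $z^{1/2}$); this is sufficient to recognize a cusp informally, but it stops short of the explicit normal form. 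Your version buys a stronger, more precise statement (diffeomorphic equivalence to the model cusp) and avoids the half-integer-power bookkeeping.

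For the Lagrangian identity, the two arguments are essentially the same in content: the paper parametrizes $C\circ N^*\gamma$ explicitly, notes it coincides with $N^*S_\pm$ for $\pm t>0$, and reads off the $t=0$ fiber $\{s=0,\phi=\pi/2,\eta=0\}$; you phrase this as functoriality of the homeomorphism $C: T^*\mbr^2\setminus 0\to T^*\mathcal{L}\setminus\{\sigma=0\}$ with respect to closures, plus the observation that the local closure $N^*\gamma\setminus 0 = \overline{N^*\gamma_+\cup N^*\gamma_-}$ is automatic for the conormal bundle of a smooth curve. Your added remark that no limit points with $\sigma=0, \eta\ne 0$ occur (because $\eta/\sigma$ is bounded along $N^*S_\pm$) is a good catch, making precise why the closure in $T^*\mathcal{L}\setminus\{\sigma=0\}$ agrees with the closure in $T^*\mathcal{L}\setminus 0$; the paper elides this point. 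Both arguments are valid.
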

\begin{proof}
First, we introduce a new variable to simplify the calculation. Let $w =  t(h(t) + t h'(t)/3)^\ha$. Here, we can shrink $\delta$ so that $h(t) + t h'(t)/3 >  0$ on $(-\delta, \delta)$. By the inverse function theorem, we see that $t = w g(w)$ on some $(-\eps, \eps)$ with $g(w)$ smooth and $g(0) \neq 0$.  

Using $w$, we see that $\tan \phi = -1/(3w^2)$ and that 
\beq
w = (-3 \tan \phi)^{-\ha} \text{ when } t > 0  \text{ and } w = -(-3\tan \phi)^{-\ha} \text{ when } t < 0. 
\eeq
Now we find that  for $t > 0$ (i.e. on $S_+$) we have
\beq
\begin{gathered}
s = t \cos \phi + t^3 h(t) \sin \phi  
= wg(w)\cos\phi + w^3  g^3(w) h(wg(w))\sin\phi \\
= g(w) (1/3)^\ha \frac{(-\cos\phi)^{3/2}}{(\sin\phi)^\ha} +   g^3(w)  h(wg(w)) (1/3)^{\frac{3}{2}}\frac{(-\cos\phi)^{3/2}}{(\sin\phi)^\ha} 
\end{gathered}
\eeq 
For $\phi \rightarrow \pi/2+$, we change variables to $z = -\cos(\phi)$ so that $z\rightarrow 0+$. Thus, $\sin \phi = (1 - z^2)^{\ha}$ and this is smooth near $z = 0$. Moreover, we have $w = \pm\left(-\frac{\cos(\phi)}{3\sin(\phi)}\right)^{1/2} = \pm\frac{z^{1/2}}{(3(1-z^2)^{1/2})^{1/2}}$ for $\pm t>0$, and this is smooth on $z>0$ near $z=0$ and continuous up to $z=0$. Hence, on $S_+$ we have the equation 
$
s =   H(z) z^{3/2}
$
for a function $H$ which is smooth on $z>0$ near $z=0$ and continuous up to $z=0$, with $H(0)\ne 0$. For $t < 0$ (i.e. on $S_-$), the exact same argument applies, except with the insertion of a minus sign since now $w=-(-3\tan\phi)^{-1/2}$. Hence, we get $s = -H(z)z^{3/2}$ on $S_-$.  Overall we have
\[s = H(z)z^{3/2}, z>0\text{ on }S_+\quad\text{and}\quad s = -H(z)z^{3/2}, z>0\text{ on }S_-,\]
so the two curves form a cusp at $s= 0, z = 0.$  

Moreover, we have
 \beq
 \begin{gathered}
 C\circ N^*\gamma = \{(s(t), \phi(t), \sigma(t), \eta(t)): s(t) = t\cos \phi(t) + t^3h(t) \sin\phi(t), \\
\phi(t) = \arctan(-1/(3t^2h(t)+t^3h'(t))), \eta(t) = \sigma(t)(-t\sin\phi(t) + (3t^2h(t)+t^3h'(t))\cos\phi(t) ) \}.
 \end{gathered}
 \eeq
For $\pm t>0$, the above set parametrizes $N^*S_{\pm}$. At $t=0$ we obtain the set
 \beq
 \{ (s,\phi,\sigma,\eta)\,:\,s = 0,  
\phi = \pi/2, \eta = 0\}.
 \eeq
Therefore, $C\circ N^*\gamma = \text{closure of } N^*S_+\cup N^*S_-.$ 
\end{proof}
We remark that for higher order inflection points, we would get a similar result that 
\beq
\begin{gathered}
s =   H(z) z^{k/2}, z>0\text{ on }S_+\quad\text{and} \quad s =   -H(z) z^{k/2}, z>0\text{ on }S_-,
\end{gathered}
\eeq
where $k\geq 3$ is odd. 
We also remark that another possible way to see that $S_+, S_-$ form a cusp is to look at the Lagrangian $C\circ N^*\gamma$ directly and apply Arnold's classification theorem \cite{Ar}.

Finally, we briefly point out  what happens when $\kappa(p_i) = 0$ but $p_i$ is not an inflection point. Then $\kappa(t)$ does not change sign across $t = 0.$ This happens when $\kappa_i = 0, i < k$ and $\kappa_k\neq 0$ for some $k$ even in \eqref{eq-curexp}. The simplest case is when $x_2(t) = t^4h(t),  h(0) \neq 0$.   Again, we assume that $h(t) > 0.$ 
Consider 
\[
\gamma = \{(x_1(t), x_2(t)) \in \mbr^2 : x_1(t) = t, x_2(t) = t^4h(t), h(t) > 0, t\in (-\delta, \delta)\}
\]
We denote by $\gamma_\pm$ the pieces of $\gamma$ where $\pm t > 0.$ Then 
$C\circ N^*\gamma_\pm = N^*S_\pm\backslash 0$ and 
 \beq
 \begin{gathered}
 S_\pm = \left\{[(s(t), \phi(t))]\in \mathcal{L}: s =  t\cos \phi + t^4 h(t) \sin\phi, 
  \phi = \arctan\left(-\frac{1}{4t^3 h(t) + t^4 h'(t)}\right), \pm t > 0\right\}
\end{gathered}
 \eeq
 At $t = 0$, we check that the two curves meet at $(s, \phi) = (0, \pi/2).$ 
Let $w =  t(h(t) + t h'(t)/4)^{\frac 13}, t\in (-\delta, \delta)$. Again, we see that $t = w g(w)$ on some $(-\eps, \eps)$ with $g(w)$ smooth and $g(0) \neq 0$. Now we have $\tan (\phi) = -1/(4w^3)$ so that 
\beq
w^3 = -\frac{\cos \phi}{4\sin \phi}
\eeq
In particular, for $\phi \neq \pi/2$, $w$ is a smooth function of $\phi$. We deduce that on $S_\pm$ 
\beq
\begin{gathered}
s = t \cos \phi + t^4 h(t) \sin \phi = wg(w)\cos\phi + w^4  g^4(w) h(wg(w))\sin\phi
\end{gathered}
\eeq
is a smooth function of $\phi$. The two curves $S_\pm$ meet tangentially at $\phi = \pi/2$ (note that the union of the curves forms a curve which is $C^1$ but not $C^2$). Although we do not pursue this case here, we remark that it can be addressed using the method we discuss in this paper. 

 \section{Commuting Radon transform with Pseudodifferential operators}\label{sec-commute}
 In this section, we aim to find the singularities in $\mcr f$ when $f \in I_k L^2(\mbr^2; \mcv(\gamma))$. There is a nice microlocal description of the space, which is due to the microlocal completeness of $\gamma$ introduced in \cite{MR1}. For any manifold $X$ and any Lagrangian submanifold $\Lambda\subset T^*X$, let 
\beq
\mcm(\Lambda) \defeq \{A \in \Psi^1(X): \sigma_1(A) = 0 \text{ on } \Lambda\backslash 0\}
\eeq
This is also a Lie algebra. In the case of $\Lambda = N^*Y$ where $Y\subset X$ is a smooth submanifold, we have for any $A\in \mcm(N^*Y)$ that 
$\sigma_1(A)$ is a $S^0(T^*X)$-linear combination of symbols of the form $\sigma_1(V)$ where $V\in\mcv(Y)$. In particular, 
\beq
\mcm(N^*Y) = \Psi^0(X) \mcv(Y) + \Psi^0(X).
\eeq
As such, in analogy to the vector field-based iterated regularity spaces defined in Section \ref{sec-lag}, we can consider iterated regularity spaces defined with respect to certain classes of pseudodifferential operators:
\beq
I_kH^s(X; \mcm(\Lambda)) \defeq \{u\in H^s(X): A_1A_2\cdots A_j u \in H^s(X), \forall j \leq k, A_j \in \mcm(\Lambda)\}. 
\eeq
This space then agrees with $I_kH^s(X;\mcv(Y))$ defined in Section \ref{sec-lag} when $\Lambda = N^*Y$.

In general, we will be interested in describing the regularity of distributions with respect to certain families of vector fields or pseudodifferential operators, so it is convenient to be able to move these operators across the Radon transform to describe the regularity of the Radon transform of a function in terms of the regularity of the function itself.

We will be interested in distributions $u$ on $\mathbb{R}^2$ which are compactly supported in some ball $B_r$; this corresponds to the region $s^2+(\eta/\sigma)^2<r^2$ in $T^*\mathcal{L}$. This means that $WF(\mathscr{R}u)$ is contained in this set; moreover singularities outside this set will be sent outside $B_r$ by the adjoint $\mathscr{R}^*$.

To move $\Psi$DOs across $\mcr$, we construct (approximate) parametrices for $\mcr$:
\begin{lemma}
There exists an FIO $Q^L$ which is a left parametrix for $\mcr$, i.e. such that $Q^L\mcr - I$ is a smoothing operator on $\mbr^2$. On the other hand, for every $r>0$ there exists an FIO $Q^R_r$ which gives a microlocal right parametrix for $\mcr$ in $\{s^2+(\eta/\sigma)^2<r^2\}$, i.e. with the property that $\mcr Q^R_r = I + E_r$ where $E_r\in\Psi^0(\mathcal{L})$ satisfies $WF'(E_r)\subset\{s^2+(\eta/\sigma)^2\ge r^2\}$.
\end{lemma}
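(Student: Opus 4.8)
The plan is to construct $Q^L$ and $Q^R_r$ using the Bolker condition established above, i.e.\ the fact that $\pi_L$ and $\pi_R$ are injective immersions (bijective onto their natural targets), which makes $\mcr^*\mcr$ an elliptic $\Psi$DO of order $-1$. For the left parametrix, I would first argue that $\mcr^*\mcr\in\Psi^{-1}(\mbr^2)$ is elliptic (its principal symbol is a nonvanishing multiple of $|\xi|^{-1}$, computed via stationary phase from the Schwartz kernel or read off from the bijectivity of $\pi_R$). Hence there exists $G\in\Psi^1(\mbr^2)$ with $G(\mcr^*\mcr)-I$ smoothing. Setting $Q^L \defeq G\mcr^*$ gives an FIO (associated to $C^*$, composed with a $\Psi$DO, hence associated to $C^*$) such that $Q^L\mcr - I = G\mcr^*\mcr - I$ is smoothing on $\mbr^2$. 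This is the clean direction because the global injectivity of $\pi_R$ — the content of the Lemma proved above, which is exactly what fails in the usual $M$-picture — means no composition-of-FIO subtleties arise and $\mcr^*\mcr$ is a genuine (globally defined) pseudodifferential operator rather than an FIO with extra components.

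For the right parametrix the obstruction is that $\mcr$ is not surjective: its range misses the ``invisible'' covectors, which in our coordinates are exactly $\{\sigma = 0\}$, and more to the point $\mcr^*$ does not see all of $T^*\mcl$. Concretely, $\mcr\mcr^*\in\Psi^{-1}(\mcl)$ but it is \emph{not} elliptic — its principal symbol degenerates as $\sigma\to 0$. So I cannot simply invert $\mcr\mcr^*$. Instead I would localize: fix $r>0$ and choose $\chi\in C^\infty(T^*\mcl)$, homogeneous of degree $0$ in $(\sigma,\eta)$, equal to $1$ on a conic neighborhood of $\{s^2+(\eta/\sigma)^2 \le r^2/4\}\cap\{|\sigma|\gtrsim|\eta|\}$ and supported where $|\sigma|$ is bounded below and $s^2+(\eta/\sigma)^2 < r^2$; the key point is that on $\supp\chi$ the symbol of $\mcr\mcr^*$ is elliptic. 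Let $\Xi = \text{Op}(\chi)\in\Psi^0(\mcl)$. On $\supp\chi$ we can invert $\mcr\mcr^*$ microlocally: there is $H\in\Psi^1(\mcl)$ with $\Xi - H(\mcr\mcr^*)\Xi$ having wavefront set outside a slightly smaller cone. Then $Q^R_r \defeq \mcr^* H\, \Xi$ (suitably arranged, possibly iterating to absorb lower-order errors) satisfies $\mcr Q^R_r = \mcr\mcr^* H\Xi = \Xi + E_r'$ where $E_r'$ is smoothing microlocally on $\supp\chi$, so $\mcr Q^R_r = I + E_r$ with $E_r = (\Xi - I) + E_r'\in\Psi^0(\mcl)$ and $WF'(E_r)\subset\{s^2+(\eta/\sigma)^2\ge r^2\}$ as required. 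Here I use the observation recorded in the paragraph before the Lemma: functions supported in $B_r\subset\mbr^2$ have Radon data with wavefront set in $\{s^2+(\eta/\sigma)^2<r^2\}$, so the error $E_r$ is harmless for our purposes.

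The main obstacle is the right parametrix, specifically handling the degeneration of $\mcr\mcr^*$ at $\sigma = 0$: one must be careful that the cutoff $\chi$ is conic and chosen so that its support avoids $\{\sigma=0\}$ yet still contains the relevant region $\{s^2+(\eta/\sigma)^2<r^2\}$ once we also know (from compact support in $B_r$) that we only care about $|\eta/\sigma|<r$, which does \emph{not} by itself bound $\sigma$ below — so the statement is genuinely only a \emph{microlocal} right parametrix away from $\{\sigma$ small$\}$, and the formulation of the Lemma reflects exactly this by only asking $WF'(E_r)$ to be controlled rather than asking $E_r$ to be smoothing. A secondary technical point is verifying that all compositions ($G\mcr^*$, $\mcr^*H\Xi$) are legitimate FIO compositions; this follows from the Bolker/transversality condition $C\circ C^*\subset\Delta$ and $C^*\circ C\subset\Delta$ already recorded above, so that composing $\mcr$ or $\mcr^*$ with $\Psi$DOs stays within the FIO calculus associated to $C$ or $C^*$.
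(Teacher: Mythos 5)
Your construction of $Q^L$ is correct and matches the paper's in spirit: the paper uses the exact identity $\mcr^*\mcr = c|D|^{-1}$ and forms $Q^L = c^{-1}\chi_\infty(D)|D|\mcr^*$ (with $\chi_\infty$ a smooth cutoff killing the $\xi = 0$ singularity), while you invoke ellipticity of $\mcr^*\mcr$ to produce a parametrix $G$ and set $Q^L = G\mcr^*$; both arguments amount to the same thing.

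The right parametrix is where there is a real gap, and it is not the one you flag. You write $Q^R_r = \mcr^*H\Xi$ and then compute $\mcr Q^R_r = \mcr\mcr^*H\Xi$, treating $\mcr\mcr^*$ as a pseudodifferential operator whose symbol merely ``degenerates as $\sigma\to 0$.'' But $\mcr\mcr^*$ is not a well-defined composition at all: the projection from $C\times C^*\cap\Delta_{T^*\mbr^2}$ to $(T^*\mcl\setminus 0)^2$ fails to be proper (this is the identity $|x|^2 = s^2 + (\eta/\sigma)^2$ from the canonical relation---the $x$-preimage is unbounded as $|\eta/\sigma|\to\infty$). Concretely, $\mcr^*g$ for compactly supported $g$ decays only like $|x|^{-1}$ as $|x|\to\infty$, so the line integral defining $\mcr(\mcr^*g)$ diverges logarithmically. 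Inserting a $\Psi$DO cutoff $\Xi$ on the sinogram side does not repair this: $\Xi g$ is still just a compactly supported function on $\mcl$, and $\mcr^*\Xi g$ still has the slow $|x|^{-1}$ decay. (You would need to prove something much more delicate---e.g.\ that symbolic support away from $\sigma = 0$ forces vanishing $s$-moments and hence $|x|^{-2}$ decay of the backprojection---and you do not attempt this; it is also not what the Bolker condition gives you, which only controls the canonical-relation composition, not properness.) This is exactly the ``subtlety'' you hand-wave away in your closing paragraph by appealing to $C\circ C^*\subset\Delta$.

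The paper's fix is structurally different and should be incorporated: insert a physical-space cutoff $\chi_r\in C_c^\infty(\mbr^2)$ identically $1$ on $B_r$ between the two operators. Then $\chi_r\mcr^*$ is properly supported, $\mcr\chi_r\mcr^*$ is an honest element of $\Psi^{-1}(\mcl)$, and---this is the other thing your approach misses---its principal symbol is
\[\chi_r\left(s\theta_\phi - \tfrac{\eta}{\sigma}\theta_\phi^\perp\right)\times(\text{elliptic order }-1\text{ symbol}),\]
which is elliptic precisely on $\{s^2+(\eta/\sigma)^2<r^2\}$. The desired microlocal inverse $F_r$ (and hence $Q^R_r = \chi_r\mcr^*F_r$) then falls out directly, with the error $E_r$ automatically microsupported in $\{s^2+(\eta/\sigma)^2\ge r^2\}$ because that is where the cut-off symbol fails to be elliptic. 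Your $\Xi$-based localization has to be imposed by hand and its relation to the set $\{s^2+(\eta/\sigma)^2<r^2\}$ is extraneous, whereas the $\chi_r$-based localization is forced by the geometry and produces exactly the right conic set.
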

The last statement means that the full symbol of $E_r$ (with respect to any quantization) decays rapidly outside the conic set $\{s^2+(\eta/\sigma)^2\ge r^2\}$, so that in particular if a distribution $u$ satisfies $WF(u)\subset\{s^2+(\eta/\sigma)^2<r^2\}$, then $WF(E_ru) \subset WF'(E_r)\cap WF(u) = \emptyset$, i.e. $E_ru$ is smooth.

\begin{proof}
It is well known (e.g. see \cite{Quinto}) that $\mathscr{R}^*\mathscr{R} = c|D|^{-1}$ for some constant $c$, i.e. it is the operator on $\mathbb{R}^2$ corresponding to the Fourier multiplier $c/|\xi|^{-1}$. Hence we have $c^{-1}|D|\mathscr{R}^*\mathscr{R} = \text{Id}$. Note that $|D|$ is not quite a $\Psi$DO on $\mathbb{R}^2$ since the symbol $|\xi|$ is singular at $\xi = 0$; however if we have $\chi_{\infty}\in C^{\infty}(\mathbb{R}^2)$ with $\chi_{\infty}\equiv 0$ on $|\xi|<1/2$ and $\chi_{\infty}\equiv 1$ on $|\xi|>1$, then $\chi_{\infty}(\xi)|\xi|$ is a smooth symbol and hence $\chi_{\infty}(D)|D|$ is a $\Psi$DO, with $|D|$ differing from $\chi_{\infty}(D)|D|$ by a smoothing operator. Hence $Q^L:=c^{-1}\chi_{\infty}(D)|D|\mathscr{R}^*$ is an FIO which inverts $\mathscr{R}$ modulo a smoothing operator.

For the other direction, there is a slight subtlety in doing the same procedure due to the fact that the projection $C\times C^*\cap\Delta_{T^*\mathbb{R}^2}\rightarrow (T^*\mathcal{L}\backslash 0)^2$ is not proper: indeed, in the canonical relation we have the identity $|x|^2 = s^2 + \left(\frac{\eta}{\sigma}\right)^2$, and for every $(s,\varphi;\sigma,\eta)$ with $\sigma\ne 0$ there exists $(x;\xi)$ with $((s,\varphi;\sigma,\eta),(x;\xi))\in C$; hence any nonempty neighborhood (even those with compact closure) of $(s_0,\varphi_0,0,\eta)$ (with $\eta\ne 0$) contains points where $\sigma\ne 0$ but $\sigma/\eta$ is arbitrarily small, so it has a preimage under the projection where $|x|$ is arbitrarily large. To fix this issue, we note that we are interested in distributions in $\mathbb{R}^2$ which are compactly supported, say in $B_r$ for some $r$. We now let $\chi_r\in C_c^{\infty}(\mathbb{R}^2)$ be identically $1$ in (a neighborhood of) $B_r$. We then consider the operator $\mathscr{R}\chi_r\mathscr{R}^*$: note that this is a well-defined operator $C^{\infty}(\mathcal{L})\rightarrow C_c^{\infty}(\mathcal{L})$ since $\chi_r\mathscr{R}^*: C^{\infty}(\mathcal{L})\rightarrow C_c^{\infty}(\mathbb{R}^2)$. Moreover, $\chi_r\mathscr{R}^*$ is a properly supported FIO, and $\mathscr{R}\chi_r\mathscr{R}^*$ is a $\Psi$DO of order $-1$ on $\mathcal{L}$ with symbol
\[(s,\varphi,\sigma,\eta)\mapsto\chi_r\left(s\theta_{\varphi}-\frac{\eta}{\sigma}\theta_{\varphi}^{\perp}\right)\times(\text{elliptic order }-1\text{ symbol}).\]
In particular it is elliptic on $\{s^2+(\eta/\sigma)^2<r^2\}$, so it can be microlocally inverted on this set, i.e. there exists $F_r\in\Psi^1(\mathcal{L})$ such that $\mathscr{R}\chi_r\mathscr{R}^*F_r = I + E_r$ where $E_r\in\Psi^0(\mathcal{L})$ and $WF'(E_r)\subset{\{s^2+(\eta/\sigma)^2\ge r^2\}}$. Thus $Q^R_r:=\chi_r\mathscr{R}^*F_r$ gives a right inverse for $\mathscr{R}$ up to an error microlocally supported in $\{s^2+(\eta/\sigma)^2\ge r^2\}$.
\end{proof}

Thus, we have the following:

\begin{prop}
\label{r2tol}
Suppose $A\in\Psi^m(\mathbb{R}^2)$ and $WF'(A)\subset\{(x,\xi)\,:\,|x|<r\}$ for some $r>0$. Then there exists $\tilde{A}\in\Psi^m(\mathcal{L})$ such that $\mathscr{R}A = \tilde{A}\mathscr{R}$ modulo a smoothing operator. Moreover, we have $\sigma_m(\tilde{A}) = \sigma_m(A)\circ C^{-1}$.
\end{prop}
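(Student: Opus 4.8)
The plan is to conjugate $A$ through the parametrices constructed in the previous lemma. Since $A\in\Psi^m(\mathbb{R}^2)$ has $WF'(A)\subset\{|x|<r\}$, we may assume $A=\chi_r A\chi_r$ modulo a smoothing operator, where $\chi_r\in C_c^\infty(\mbr^2)$ is identically $1$ near $B_r$, so $A$ is properly supported and $\mcr A$ makes sense. Set $\tilde A\defeq \mcr A Q^L$, where $Q^L$ is the left parametrix with $Q^L\mcr=I+(\text{smoothing})$. Then
\[
\tilde A\mcr = \mcr A Q^L\mcr = \mcr A(I+\text{smoothing}) = \mcr A \mod \text{smoothing},
\]
which is the desired intertwining identity. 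It remains to check that $\tilde A$ is in fact a pseudodifferential operator of order $m$ on $\mcl$ (a priori it is only an FIO, being a composition $\mcr\circ A\circ Q^L$ of FIOs), and to compute its principal symbol.

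For the first point, I would argue at the level of canonical relations. The operator $\mcr\in I^{-1/2}(\mcl\times\mbr^2,C)$, $A$ is associated with $\Delta_{T^*\mbr^2}$, and $Q^L=c^{-1}\chi_\infty(D)|D|\mcr^*$ is associated with $C^*$ (the transpose relation, corresponding to the inverse map $C^{-1}$). Hence $\tilde A$ is associated with the composition $C\circ\Delta_{T^*\mbr^2}\circ C^*=C\circ C^*$, and by the computation in Section \ref{sec-radon} we have $C\circ C^*\subset\Delta_{T^*\mcl\backslash 0}$. A composition of FIOs whose canonical relation is (contained in) the diagonal is a $\Psi$DO; one must check the composition is transversal (or at least that the relevant projections are clean/proper), but since $C$ is the graph of the diffeomorphism $C:T^*\mbr^2\backslash 0\to T^*\mcl\backslash\{\sigma=0\}$ written explicitly in Section \ref{sec-radon}, the composition is as transversal as possible and no cleanness subtleties arise. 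The order bookkeeping is $-\tfrac12+m+(-\tfrac12)=m-1$ for the FIO order, but because the canonical relation is the diagonal (so the "excess"/fiber dimension shifts the FIO order convention to the $\Psi$DO order convention by $+1$, or equivalently $\mcr^*\mcr$ being order $-1$ rather than $-1/2-1/2$), $\tilde A$ is a $\Psi$DO of order $m$ on $\mcl$. The properness needed for $\mcr A Q^L$ to be well-defined is exactly what the cutoff $\chi_r$ in $Q^L$ (or the compact support of $WF'(A)$) provides, as in the proof of the previous lemma.

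For the principal symbol, I would use the symbol calculus for the composition, or more simply the intertwining identity together with ellipticity of $\mcr$ microlocally. Since $\tilde A\mcr=\mcr A$ modulo smoothing and $\mcr$ is elliptic as an FIO with canonical graph $C$, passing to principal symbols and using that $\sigma(\mcr A)(\zeta)=\sigma_{-1/2}(\mcr)(\zeta)\,\sigma_m(A)(C^{-1}\zeta)$ while $\sigma(\tilde A\mcr)(\zeta)=\sigma_m(\tilde A)(\zeta)\,\sigma_{-1/2}(\mcr)(\zeta)$ for $\zeta\in C$, and $\sigma_{-1/2}(\mcr)$ is nonvanishing there, we cancel to get $\sigma_m(\tilde A)=\sigma_m(A)\circ C^{-1}$ on $T^*\mcl\backslash\{\sigma=0\}$, which is all of $T^*\mcl$ away from the excluded set where things are supported. (Alternatively, this also follows directly from the stationary-phase/symbol composition formula for $C\circ\Delta\circ C^*$.)

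The main obstacle I anticipate is not any single deep step but the careful handling of the non-properness of the relevant projections — the same issue flagged in the construction of $Q^R_r$: the relation $C$ does not have proper projection to $T^*\mcl$ near $\sigma=0$, so one must be careful that the FIO compositions are all well-defined and that the "smoothing" errors really are smoothing rather than merely order $-\infty$ in a partial sense. The hypothesis $WF'(A)\subset\{|x|<r\}$, via the identity $|x|^2=s^2+(\eta/\sigma)^2$ on $C$, confines everything to the region $\{s^2+(\eta/\sigma)^2<r^2\}$ where $\sigma$ is bounded away from $0$ relative to $\eta$, which is exactly what makes the compositions proper; writing this out cleanly (and noting that it is also why one gets a genuine $\Psi$DO rather than something with wavefront set creeping toward $\sigma=0$) is the part that requires care.
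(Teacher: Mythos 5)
Your overall construction is the same as the paper's: set $\tilde A$ to be (essentially) $\mcr\,A\,Q^L$, use the canonical-relation composition $C\circ\Delta_{T^*\mbr^2}\circ C^*\subset\Delta_{T^*\mathcal{L}}$ to see it is a $\Psi$DO, verify the intertwining identity via $Q^L\mcr=I$ mod smoothing, and identify the principal symbol. (The paper sandwiches each factor explicitly, taking $\tilde A=(\mcr\chi)(\chi A\chi)(\chi Q^L)$ with $\chi\equiv 1$ on $B_r$, precisely to make each factor properly supported; your note that $A=\chi_r A\chi_r$ mod smoothing is the same device.) Your route to the symbol, via the intertwining identity plus microlocal ellipticity of $\mcr$, is a slightly different and arguably cleaner path than the paper's direct three-factor product at the composed canonical relations; both are fine.

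There is, however, a concrete error in your order bookkeeping. You assign $Q^L$ the FIO order $-\tfrac12$, obtain $-\tfrac12+m-\tfrac12=m-1$, and then invoke an ``excess''-type shift of $+1$ to get to $m$. But $Q^L=c^{-1}\chi_\infty(D)|D|\mcr^*$ is the composition of the order-$1$ $\Psi$DO $\chi_\infty(D)|D|$ with $\mcr^*\in I^{-1/2}$, so $Q^L\in I^{+1/2}$. Thus $-\tfrac12+m+\tfrac12=m$ directly, and no shift is needed or present: $C$ is a canonical graph, the composition is transversal, so orders add and the FIO order equals the $\Psi$DO order for a diagonal canonical relation. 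Your own ``check'' actually disproves the shift: $\mcr^*\mcr$ having order $-1$ is exactly $-\tfrac12+(-\tfrac12)=-1$, not $-\tfrac12-\tfrac12+1$. Separately, you write of ``the cutoff $\chi_r$ in $Q^L$'' as what saves properness, but $Q^L$ contains no spatial cutoff (that is $Q^R_r$); the correct reason is the one you give in parentheses, namely the compact spatial microsupport of $A$, which the paper makes explicit by the $\chi$-sandwich in the definition of $\tilde A$.
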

In other words, we can always move $\Psi$DOs on $\mathbb{R}^2$ across the Radon transform.

\begin{proof}
Let $\chi\in C_c^{\infty}(\mathbb{R}^2)$ satisfy $\chi\equiv 1$ on $B_r$. Take $\tilde{A} = (\mathscr{R}\chi)(\chi A\chi)(\chi Q^L)$. Note that each of the terms is a properly supported FIO, and hence their composition is an FIO whose canonical relation is the diagonal relation on $T^*\mathcal{L}$, i.e. a $\Psi$DO. To calculate the principal symbol, we evaluate the principal symbol of each operator at the appropriate canonical relations to get
\begin{align*}\sigma(\tilde{A})(s,\phi,\sigma,\eta) &= \sigma(\mathscr{R}\chi)((s,\phi,\sigma,\eta),C^{-1}(s,\phi,\sigma,\eta)) \cdot\sigma(\chi A\chi)(C^{-1}(s,\phi,\sigma,\eta)) \\
&\cdot\sigma(\chi Q^L)(C^{-1}(s,\phi,\sigma,\eta),(s,\phi,\sigma,\eta)).
\end{align*}
Note that if $(x,\xi) = C^{-1}(s,\phi,\sigma,\eta)$ and $|x|\ge r$, then the middle term vanishes since $WF'(A)\subset\{|x|<r\}$. On the other hand, for $(s,\phi,\sigma,\eta)$ where the corresponding $(x,\xi)$ satisfies $|x|<r$, we then have $\chi(x) = 1$, and hence
\begin{align*} &\sigma(\mathscr{R}\chi)((s,\phi,\sigma,\eta),C^{-1}(s,\phi,\sigma,\eta))\cdot\sigma(\chi Q^L)(C^{-1}(s,\phi,\sigma,\eta),(s,\phi,\sigma,\eta)) \\
&=\sigma(\mathscr{R})((s,\phi,\sigma,\eta),C^{-1}(s,\phi,\sigma,\eta))\cdot\sigma(Q^L)(C^{-1}(s,\phi,\sigma,\eta),(s,\phi,\sigma,\eta)) \\
&=1
\end{align*}
since $Q^L\mcr = I$ up to smoothing. Hence $\sigma(\tilde{A}) = \sigma(\chi A\chi)\circ C^{-1} = \sigma(A)\circ C^{-1}$. Moreover, we have
\[\tilde{A}\mathscr{R} = \mathscr{R}AQ^L\mathscr{R} + \mathscr{R}(\chi^2A\chi^2-A)Q^L\mathscr{R} = \mathscr{R}A + \text{smoothing}\]
since $Q^L\mathscr{R} = I$ on $\mathbb{R}^2$ modulo smoothing, while $\chi^2A\chi^2-A\in\Psi^{-\infty}(\mathbb{R}^2)$ since $\chi\equiv 1$ on $WF'(A)$.
\end{proof}




As a corollary, we can always move $\Psi$DOs on $\mathbb{R}^2$ across the filtered backprojection as well. This implies that iterated regularity statements regarding distributions in the image of the filtered backprojection in physical space can be rephrased in terms of iterated regularity statements on the sinogram space.

\begin{cor}
\label{backprojcomm}
For the filtered backprojection $\mcr^*\mci^{-1} = c\mcr^*|D_s|$, for any $A\in\Psi^m(\mathbb{R}^2)$ with $WF'(A)\subset\{|x|<r\}$, there exists $\tilde{A}\in\Psi^m(\mathcal{L})$ such that $A\mcr^*\mci^{-1} = \mcr^*\mci^{-1}\tilde{A}$
modulo a smoothing operator. Moreover, we have $\sigma_m(\tilde{A}) = \sigma_m(A)\circ C^{-1}$.
\end{cor}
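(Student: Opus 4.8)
The plan is to read this off from Proposition~\ref{r2tol} by passing to adjoints and then conjugating by the filter $\mci^{-1}$. Since $A^*\in\Psi^m(\mbr^2)$ has the same microsupport as $A$, in particular $WF'(A^*)\subset\{|x|<r\}$, Proposition~\ref{r2tol} applied to $A^*$ produces $\hat B\in\Psi^m(\mathcal{L})$ with $\mcr A^* = \hat B\,\mcr$ modulo a smoothing operator and $\sigma_m(\hat B)=\sigma_m(A^*)\circ C^{-1}$. Taking formal adjoints of this intertwining identity (the adjoint of a smoothing operator being smoothing) gives $A\,\mcr^* = \mcr^*\hat B^*$ modulo smoothing. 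Set $B:=\hat B^*\in\Psi^m(\mathcal{L})$; since the principal symbol of a formal adjoint is the complex conjugate of the principal symbol, and precomposition by the real map $C^{-1}$ commutes with conjugation, we obtain $\sigma_m(B)=\overline{\sigma_m(\hat B)}=\overline{\sigma_m(A^*)}\circ C^{-1}=\sigma_m(A)\circ C^{-1}$.

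It then remains to move $B$ across $\mci^{-1}=c|D_s|$. The operator $|D_s|$ is not literally a $\Psi$DO on $\mathcal{L}$, as its symbol $|\sigma|$ is singular along $\{\sigma=0\}$; but, exactly as with $|D|$ on $\mbr^2$ in the proof of the parametrix lemma, we may split it as $|D_s|=\chi_\infty(D_s)|D_s| + (\text{a Fourier multiplier in }s\text{ supported near }\{\sigma=0\})$, where $\chi_\infty$ is a cutoff in $\sigma$ vanishing near $\sigma=0$ and equal to $1$ for $|\sigma|$ large, so that $\Lambda:=c\,\chi_\infty(D_s)|D_s|$ is a genuine $\Psi$DO of order $1$ on $\mathcal{L}$, elliptic on $\{\sigma\ne 0\}$. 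The multiplier supported near $\{\sigma=0\}$, composed with $\mcr^*$ on the left, is a smoothing operator: such a multiplier produces a function that is band-limited in $s$, hence has wavefront set contained in $\{\sigma=0\}$, and $\mcr^*$ annihilates wavefront there since $C^{-1}$ sends $\{\sigma=0\}$ to the zero section of $T^*\mbr^2$ (equivalently, out to $|x|=\infty$). Hence $\mcr^*\mci^{-1}=\mcr^*\Lambda$ modulo smoothing. Moreover $WF'(B)\subset\{\sigma\ne 0\}$, again because the microsupport of $B$ is transported from that of $A$ by $C$ and $C$ takes values in $\{\sigma\ne 0\}$; so on $WF'(B)$ the operator $\Lambda$ is elliptic and admits a microlocal parametrix, and conjugating gives $B\Lambda = \Lambda\tilde A$ modulo smoothing with $\tilde A\in\Psi^m(\mathcal{L})$ and $\sigma_m(\tilde A)=\sigma_m(B)=\sigma_m(A)\circ C^{-1}$.

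Assembling the pieces, $A\,\mcr^*\mci^{-1} = \mcr^* B\Lambda = \mcr^*\Lambda\tilde A = \mcr^*\mci^{-1}\tilde A$ modulo a smoothing operator, which is the assertion. I expect the only genuinely delicate point to be the bookkeeping around the singularity of $\mci^{-1}$ along $\{\sigma=0\}$ and the cutoffs needed to keep all compositions proper — precisely the subtlety already confronted in the right-parametrix half of the preceding lemma — so that every error term is honestly smoothing on the compactly supported data of interest rather than merely negligible microlocally inside $\{s^2+(\eta/\sigma)^2<r^2\}$. An essentially equivalent and slightly more direct route is to define $\tilde A:=\mcr A\,\mcr^*\mci^{-1}$ outright: then $\mcr A\,\mcr^*$ is a $\Psi$DO of order $m-1$ on $\mathcal{L}$ (its canonical relation being $C\circ C^*\subset\Delta_{T^*\mathcal{L}}$, with principal symbol obtained by composing principal symbols along $C$), so $\tilde A$ has order $m$ and $\sigma_m(\tilde A)=\sigma_m(A)\circ C^{-1}$, while the intertwining identity $\mcr^*\mci^{-1}\tilde A=A\,\mcr^*\mci^{-1}$ follows from $\mcr^*\mci^{-1}\mcr=I$ modulo smoothing; this incurs the same properness and $\{\sigma=0\}$ considerations.
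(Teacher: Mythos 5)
Your overall strategy — apply Proposition~\ref{r2tol} to $A^*$, take adjoints, and then conjugate across $\mci^{-1}$ after cutting off the singularity of $|\sigma|$ — is the same as the paper's; the paper just stays in the adjoint picture throughout, setting $\tilde A^* = \chi(D)|D_s|\tilde B\chi(D)|D_s|^{-1}$, whereas you take adjoints first and conjugate $B=\hat B^*$ by $\Lambda$. These are equivalent presentations. Your alternative closing suggestion $\tilde A := \mcr A\mcr^*\mci^{-1}$ (using $\mcr^*\mci^{-1}\mcr = I$) is a cleaner route than the paper's, though it faces the identical difficulty.

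The one genuine error is the cutoff. You take $\chi_\infty = \chi_\infty(\sigma)$ to be a function of $\sigma$ alone, vanishing near $\sigma=0$ and $\equiv 1$ for $|\sigma|$ large, and claim that $\Lambda := c\,\chi_\infty(D_s)|D_s|$ is a $\Psi$DO of order $1$ on $\mathcal{L}$. It is not: on $T^*\mathcal{L}$ the fiber variables are $(\sigma,\eta)$, and while $\chi_\infty(\sigma)|\sigma|$ is smooth, its $\sigma$-derivatives do not satisfy symbolic estimates. For instance $\partial_\sigma^2\bigl(\chi_\infty(\sigma)|\sigma|\bigr)$ is a nonzero bounded function supported in the transition region $\tfrac12<|\sigma|<1$, independently of $\eta$; a symbol of order $1$ would require this to decay like $(1+|\sigma|+|\eta|)^{-1}$ as $|\eta|\to\infty$, which fails. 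The analogy with $|D|$ on $\mbr^2$ breaks down precisely here: there, $|\xi|$ is singular only at the single point $\xi=0$, so a compactly-supported cutoff $\chi_\infty(\xi)$ near the origin fixes it; on $\mathcal{L}$, the symbol $|\sigma|$ is singular along the whole hyperplane $\{\sigma=0\}$, and excising a strip $|\sigma|<1$ leaves the resulting function non-symbolic in the $\eta$-direction. The paper's fix is to use a \emph{conic} cutoff $\chi(\sigma,\eta)$ supported in $\{|\eta/\sigma|<2r,\,|\sigma|>1\}$ and $\equiv 1$ on $\{|\eta/\sigma|<r,\,|\sigma|>2\}$; then $\chi(\sigma,\eta)|\sigma|^{\pm1}$ are genuine symbols because on the support one has $|\sigma|\gtrsim|(\sigma,\eta)|$. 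This is also where you should sharpen "$WF'(B)\subset\{\sigma\ne 0\}$": what Proposition~\ref{r2tol} actually gives (since $\tilde B$ is cut off by $\chi$ with support in $B_r$ before and after) is $WF'(B)\subset\{s^2+(\eta/\sigma)^2<r^2\}$, a set \emph{conically} bounded away from $\{\sigma=0\}$, and that uniform bound $|\eta/\sigma|<r$ is exactly what makes the insertion of the conic cutoffs around $\tilde B$ (equivalently $B$) smoothing. Without it, "$\sigma\neq 0$" alone does not suffice.

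Relatedly, when you discard $\mcr^*\bigl(\mci^{-1}-\Lambda\bigr)$, the remainder $(1-\chi_\infty(\sigma))|\sigma|$ is again not a symbol on $T^*\mathcal{L}$, so it is not immediately a well-behaved operator; with the conic cutoff the clean statement is that $(1-\chi(D))$ is microlocally trivial on the conic set $\{|\eta/\sigma|<r,|\sigma|>2\}$ containing $WF'(B)$, whence $B(1-\chi(D))|D_s|$ and $(1-\chi(D))|D_s|B$ are smoothing, without needing to argue about band-limitedness in $s$ or wavefront sets in $\{\sigma=0\}$ at all.
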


\begin{proof}
It suffices to solve for the adjoint $\tilde{A}^*$ in the adjoint equation $|D_s|\mcr A^* = \tilde{A}^*|D_s|\mcr$. By Proposition \ref{r2tol}, we can find $\tilde{B}\in\Psi^m(\mathcal{L})$ such that $\tilde{B}\mcr = \mcr A^*$ modulo smoothing, and moreover $\sigma_m(\tilde{B}) = \sigma_m(A^*)\circ C^{-1} = \sigma_m(A)\circ C^{-1}$. In particular, $WF(\tilde{B})\subset\{s^2+(\eta/\sigma)^2<r^2\}$. We would then like to let $\tilde{A}^* = |D_s|\tilde{B}|D_s|^{-1}$, since then $\tilde{A}^*|D_s|\mcr = |D_s|\tilde{B}\mcr = |D_s|\mcr A^*$ up to smoothing; however there is a subtle problem in that $|D_s|$ and $|D_s|^{-1}$ are not quite pseudodifferential operators on $\mathcal{L}$ (the ``symbols'' $|\sigma|^{\pm 1}$ fail to be symbolic in a conic neighborhood of $\sigma = 0$). Nonetheless, the non-symbolic behavior is away from where $\tilde{B}$ is microlocally supported. Hence, if $\chi(\sigma,\eta)$ (viewed as a function on $T^*\mathcal{L}$ independent of $(s,\phi)$) is supported in $\{|\eta/\sigma|<2r,|\sigma|>1\}$ and is identically $1$ on $\{|\eta/\sigma|<r,|\sigma|>2\}$ (note that this notion makes sense on $T^*\mathcal{L}$), then $\chi(\sigma,\eta)|\sigma|^{\pm 1}$ are symbols on $T^*\mathcal{L}$, and the corresponding $\Psi$DOs $\chi(D)|D_s|$ and $\chi(D)|D_s|^{-1}$ satisfy that $\chi(D)|D_s|\tilde{B}\chi(D)|D_s|^{-1}$ differs from $|D_s|\tilde{B}|D_s|^{-1}$ by a smoothing operator. Thus, letting $\tilde{A}^* = \chi(D)|D_s|\tilde{B}\chi(D)|D_s|^{-1}$, we have that $|D_s|\mcr A^* = \tilde{A}^*|D_s|\mcr$ modulo a smoothing operator, with $\sigma_m(\tilde{A}^*) = \sigma_m(B) = \sigma_m(A)\circ C^{-1}$; this gives $\sigma_m(\tilde{A}) = \sigma_m(A)\circ C^{-1}$ as well.
\end{proof}

For the other direction, we use the approximate right inverse $Q^{R}_r$ for $r$ sufficiently large:

\begin{prop}
\label{ltor2}
Suppose $\tilde{A}\in\Psi^m(\mathcal{L})$. Then for any $r>0$ there exists $A_r\in\Psi^m(\mathbb{R}^2)$ such that $\mathscr{R}A_r = \tilde{A}\mathscr{R} + \tilde{E}_r$ where 
$\tilde{E}_r:L_c^{\infty}(\mathbb{R}^2)\rightarrow\mathcal{D}'(\mathcal{L})$ satisfies $WF(\tilde{E}_ru)\subset\{s^2+(\eta/\sigma)^2\ge r^2\}\cap WF(\mathscr{R}u)$. Moreover $\sigma_m(A_r)|_{\{|x|<r\}} = \sigma_m(\tilde{A})\circ C|_{\{|x|<r\}}$, and the projection of $WF'(A_r)$ onto the base is compactly supported.
\end{prop}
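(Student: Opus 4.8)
The plan is to mirror the construction in Proposition \ref{r2tol}, but going in the opposite direction using the microlocal right parametrix $Q^R_r$ in place of the left parametrix $Q^L$. Given $\tilde A\in\Psi^m(\mathcal L)$, first cut off on the sinogram side: choose $\chi\in C_c^\infty(\mathbb R^2)$ equal to $1$ on a neighborhood of $B_r$, and recall that $\chi Q^R_r$, $\mcr\chi$, and $\chi\mcr^*$ are all properly supported FIOs. The natural candidate is
\[
A_r \defeq (\chi Q^R_r)(\tilde A)(\mcr\chi)
\]
— a composition of properly supported FIOs whose canonical relation is $C^*\circ(\text{diagonal})\circ C\subset \Delta_{T^*\mathbb R^2}$ (by the identity $C\circ C^*\subset\Delta_{T^*\mathcal L\backslash 0}$ recorded after the projection lemma), hence $A_r\in\Psi^m(\mathbb R^2)$, and the base projection of $WF'(A_r)$ is compactly supported because of the $\chi$ factors. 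The principal symbol computation is routine: at a point $(x,\xi)$ with $|x|<r$ one gets $\chi(x)=1$, the two $\mcr$-type factors contribute $\sigma(\mcr)\cdot\sigma(Q^R_r)$ evaluated along $C$ which equals $1$ modulo the negligible contribution of $E_r$ (supported in $s^2+(\eta/\sigma)^2\ge r^2$, hence away from $|x|<r$), so $\sigma_m(A_r)|_{\{|x|<r\}}=\sigma_m(\tilde A)\circ C|_{\{|x|<r\}}$.

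Next I would verify the intertwining relation with its error term. Compute
\[
\mcr A_r = \mcr(\chi Q^R_r)\tilde A(\mcr\chi) = \mcr Q^R_r\tilde A\mcr\chi + \mcr(\chi-1)Q^R_r\tilde A\mcr\chi.
\]
Using $\mcr Q^R_r = I + E_r$ from the parametrix lemma, the first term is $\tilde A\mcr\chi + E_r\tilde A\mcr\chi$. Then write $\mcr\chi = \mcr + \mcr(\chi-1)$; since $u$ is supported in $B_r$ and $\chi\equiv 1$ there, $\mcr(\chi-1)u$ is actually zero (not merely smoothing) when applied to such $u$ — here it is cleanest to state the conclusion as an identity on $L^\infty_c(\mathbb R^2)$, or equivalently to insert a second cutoff and note $(\chi-1)u=0$. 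Thus on compactly-supported-in-$B_r$ inputs, $\mcr A_r u = \tilde A\mcr u + \tilde E_r u$ with $\tilde E_r u = E_r\tilde A\mcr u + \mcr(\chi-1)Q^R_r\tilde A\mcr\chi u$. The first piece has wavefront set in $WF'(E_r)\cap WF(\mcr u)\subset\{s^2+(\eta/\sigma)^2\ge r^2\}\cap WF(\mcr u)$ by the last statement of the parametrix lemma; the second piece — where a sinogram distribution gets cut off outside $B_r$ by $(\chi-1)$, then hit by $\mcr$ — must be shown to have wavefront set in the same region, which follows from the support considerations discussed in the paragraph before the parametrix lemma (singularities of $\mcr^*$-type / $Q^R_r$-type operators applied to things supported away from $B_r$ land in $\{|x|\ge r\}$, which on the sinogram side is $\{s^2+(\eta/\sigma)^2\ge r^2\}$). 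This gives the stated bound $WF(\tilde E_r u)\subset\{s^2+(\eta/\sigma)^2\ge r^2\}\cap WF(\mcr u)$, including the mapping property $\tilde E_r:L^\infty_c(\mathbb R^2)\to\mathcal D'(\mathcal L)$.

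The main obstacle, and the reason the statement is phrased with an honest error $\tilde E_r$ rather than ``modulo smoothing,'' is the non-properness of the composition $C\times C^*\cap\Delta\to (T^*\mathcal L\backslash 0)^2$ already flagged in the proof of the parametrix lemma: one cannot invert $\mcr$ globally on $T^*\mathcal L$, only microlocally on $\{s^2+(\eta/\sigma)^2<r^2\}$, because the fiber identity $|x|^2 = s^2+(\eta/\sigma)^2$ degenerates as $\sigma\to 0$. So I need to be careful that all the FIO compositions above are legitimate (each factor properly supported, so compositions make sense and symbol calculus applies) and that the leftover $E_r$ and $Q^R_r$ errors are tracked with their precise wavefront supports rather than discarded. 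A secondary technical point is that $Q^R_r = \chi_r\mcr^*F_r$ involves $F_r\in\Psi^1$ and the pseudodifferential-order bookkeeping through $\tilde A\in\Psi^m$ must give $A_r\in\Psi^m$ exactly — this is automatic from the orders $\mcr\in I^{-1/2}$, $\mcr^*\in I^{-1/2}$, $F_r\in\Psi^1$ since $-1/2+m+1-1/2 = m$, but worth checking. Everything else (the principal symbol evaluation, the diagonal canonical relation) is a direct transcription of the argument in Proposition \ref{r2tol}.
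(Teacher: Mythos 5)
Your approach is essentially the paper's, but you carry over cutoff machinery from Proposition \ref{r2tol} that is not needed here, and in the course of handling the extra pieces it creates, you slip in a hypothesis that is not in the statement. The paper takes simply $A_r = Q^R_r\tilde A\mcr$, with no additional $\chi$'s. The extra cutoffs were needed in Proposition \ref{r2tol} because there $A$ was an arbitrary $\Psi$DO on $\mbr^2$ and the left parametrix $Q^L = c^{-1}\chi_\infty(D)|D|\mcr^*$ carries no spatial cutoff; one had to insert $\chi$ to get properly supported compositions. Here the needed spatial localization is already built into $Q^R_r = \chi_r\mcr^*F_r$ through the factor $\chi_r$, so $Q^R_r\tilde A\mcr$ is already a legitimate $\Psi$DO with compactly supported base projection of $WF'$, and the identity $\mcr Q^R_r = I + E_r$ gives immediately $\mcr A_r = \tilde A\mcr + E_r\tilde A\mcr$. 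Thus $\tilde E_r = E_r\tilde A\mcr$ is the only error, and $WF(\tilde E_ru)\subset WF'(E_r\tilde A)\cap WF(\mcr u)\subset WF'(E_r)\cap WF(\mcr u)$ follows from the $\Psi$DO calculus with no support restriction on $u$ whatsoever.

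By contrast, your $A_r = (\chi Q^R_r)\tilde A(\mcr\chi)$ introduces the two extra error terms $\tilde A\mcr(\chi-1)u$ and $\mcr(\chi-1)Q^R_r\tilde A\mcr\chi u$, and it is in disposing of the first that your argument has a genuine gap: you assert that $u$ is supported in $B_r$ so that $(\chi-1)u=0$, but that is not a hypothesis of this Proposition. The statement quantifies over all $u\in L_c^\infty(\mbr^2)$; the restriction to support in $B_r$ is imposed later, in Lemma \ref{iter-commute}, not here. Your construction can in fact be salvaged without that assumption --- both extra terms do have wavefront set in $\{s^2+(\eta/\sigma)^2\ge r^2\}\cap WF(\mcr u)$, because $(\chi-1)$ localizes to $\{|x|\ge r\}$, whose image under $C$ is $\{s^2+(\eta/\sigma)^2\ge r^2\}$, while $\mcr$ being an elliptic FIO gives $WF(\mcr v)=C\circ WF(v)\subset C\circ WF(u) = WF(\mcr u)$ for $v = (\chi-1)u$, etc. --- but this is precisely the bookkeeping the paper's choice avoids. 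The structural point your transcription of Proposition \ref{r2tol} misses is this asymmetry between the two parametrix directions: the right parametrix already carries its own cutoff.
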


\begin{proof}
Take $A_r = Q^R_r\tilde{A}\mathscr{R}$. Then $\mathscr{R}A_r = (\mathscr{R}Q^R_r)\tilde{A}\mathscr{R} = (I+E_r)\tilde{A}\mathscr{R}$. Furthermore, the symbol of $A_r$ is given by
\[\sigma(A_r)(x,\xi) = \sigma(Q^R_r)((x,\xi),C(x,\xi))\cdot\sigma(\tilde{A})(C(x,\xi))\cdot\sigma(\mcr)(C(x,\xi),(x,\xi)).\]
Since $Q^R_r\mcr - \text{Id}$ is microlocally trivial on $B_r$, we have $\sigma(Q^R_r)((x,\xi),C(x,\xi))\cdot\sigma(\mathscr{R})(C(x,\xi),(x,\xi)) = 1$ for $x\in B_r$, and hence the above expression just equals $\sigma(\tilde{A})(C(x,\xi))$ for $x\in B_r$. Note as well that for any $u$ we have $WF(E_r\tilde{A}\mathscr{R}u)\subset WF'(E_r)\cap WF(\mathscr{R}u)\subset{\{s^2+(\eta/\sigma)^2\ge r^2\}} \cap WF(\mcr u)$. Moreover, since $Q^R_r$ contains a term of the form $\chi_r(x)$, which is compactly supported, it follows that $A_r$ is microlocally trivial outside $\{(x,\xi)\,:\,x\in\text{supp }\chi_r\}$, and hence $WF'(A_r)$ projects into a compact region in the base.
\end{proof}




Now we are ready   to describe iterated regularity of $\mcr f$ in terms of iterated regularity of $f$:

\begin{lemma}
\label{iter-commute}
Suppose $\Lambda\subset T^*\mbr^2$ is Lagrangian, and let $\tilde\Lambda = C\circ\Lambda$ be the image of $\Lambda$ under the canonical relation $C$. Suppose $u\in I_kL^2(\mbr^2;\Lambda)$ is compactly supported. Then $\mcr u\in I_kH^{\ha}(\mathcal{L};\tilde\Lambda)$.
\end{lemma}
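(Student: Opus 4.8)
The plan is to prove the statement by induction on $k$, using Proposition \ref{ltor2} to peel off one factor of a test product on $\mathcal{L}$ at a time and transfer it to $\mbr^2$. The base case $k=0$ is just the mapping property $\mcr:H^0(\mbr^2)\to H^{\ha}(\mathcal{L})$ recorded above: for compactly supported $u\in L^2$ we get $\mcr u\in H^{\ha}(\mathcal{L})$, and $\mcr u$ is again compactly supported. For the inductive step, assume the statement holds with $k-1$ in place of $k$ for \emph{every} compactly supported element of $I_{k-1}L^2(\mbr^2;\Lambda)$, and let $u\in I_kL^2(\mbr^2;\Lambda)$ be supported in a ball $B_\rho$. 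Since $u\in I_{k-1}L^2(\mbr^2;\Lambda)$ as well, the inductive hypothesis already gives $\tilde A_1\cdots\tilde A_j\mcr u\in H^{\ha}(\mathcal{L})$ for every $j\le k-1$ and all $\tilde A_i\in\mcm(\tilde\Lambda)$, so it remains only to show $\tilde A_1\cdots\tilde A_k\mcr u\in H^{\ha}(\mathcal{L})$ for a product of $k$ such operators (which we may take properly supported).

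The first step is to strip off $\tilde A_k$. Fix $r>\rho$ and apply Proposition \ref{ltor2} to $\tilde A_k\in\Psi^1(\mathcal{L})$: there is a properly supported $A_r\in\Psi^1(\mbr^2)$ with $\mcr A_r=\tilde A_k\mcr+\tilde E_r$, with $\sigma_1(A_r)|_{\{|x|<r\}}=\sigma_1(\tilde A_k)\circ C|_{\{|x|<r\}}$, and with $WF(\tilde E_r v)\subset\{s^2+(\eta/\sigma)^2\ge r^2\}\cap WF(\mcr v)$. Because $\mathrm{supp}\,u\subset B_\rho$ we have $WF(\mcr u)\subset C(WF(u))\subset\{s^2+(\eta/\sigma)^2=|x|^2\le\rho^2\}$, and since $r>\rho$ this forces $WF(\tilde E_r u)=\emptyset$; combined with properness, $\tilde E_r u\in C_c^\infty(\mathcal{L})$. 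Thus $\tilde A_k\mcr u=\mcr A_r u-\tilde E_r u$ with a smooth, compactly supported error. Next pick $\psi\in C_c^\infty(\mbr^2)$ with $\psi\equiv 1$ near $B_\rho$ and $\mathrm{supp}\,\psi\subset\{|x|<r\}$, so $A_r u=(A_r\psi)u$. On $\mathrm{supp}\,\psi$, the symbol identity above together with the fact that $C$ is a bijection $T^*\mbr^2\backslash 0\to T^*\mathcal{L}\backslash\{\sigma=0\}$ with $C^{-1}(\tilde\Lambda)=\Lambda$ shows $\sigma_1(A_r)=\sigma_1(\tilde A_k)\circ C=0$ on $\Lambda\backslash 0$; multiplying by $\psi$ kills the symbol off $\mathrm{supp}\,\psi$, so $A_r\psi\in\mcm(\Lambda)$.

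Now set $v:=(A_r\psi)u=A_r u$. Since $A_r\psi\in\mcm(\Lambda)$ and $u\in I_kL^2(\mbr^2;\Lambda)$, applying any $\le k-1$ further operators from $\mcm(\Lambda)$ to $v$ is the same as applying $\le k$ of them to $u$, so $v\in I_{k-1}L^2(\mbr^2;\Lambda)$, and $v$ is compactly supported. By the inductive hypothesis $\mcr v\in I_{k-1}H^{\ha}(\mathcal{L};\tilde\Lambda)$, hence $\tilde A_1\cdots\tilde A_{k-1}\mcr v\in H^{\ha}(\mathcal{L})$. Combining,
\[\tilde A_1\cdots\tilde A_k\mcr u=\tilde A_1\cdots\tilde A_{k-1}\mcr v\;-\;\tilde A_1\cdots\tilde A_{k-1}\tilde E_r u\in H^{\ha}(\mathcal{L}),\]
the first term by the previous line and the second because it is smooth and compactly supported. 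This closes the induction. (That $\mathrm{supp}\,v$ may be a larger ball than $B_\rho$ is harmless: the inductive hypothesis covers all compactly supported elements of $I_{k-1}L^2$, and only $k$ steps occur.)

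The main obstacle is the bookkeeping around the right parametrix used in Proposition \ref{ltor2}: $Q_r^R$ is only a \emph{microlocal} right inverse, good away from $\{\sigma=0\}$, so one must exploit the compact support of $u$ to choose $r$ beyond its support radius in order to render the parametrix error $\tilde E_r u$ genuinely smooth and compactly supported. A second point that needs care is checking that the principal-symbol condition defining $\mcm(\Lambda)$ actually transfers across the canonical relation — this is exactly where $C^{-1}(\tilde\Lambda)=\Lambda$ and $\sigma_1(A_r)=\sigma_1(\tilde A_k)\circ C$ on $\{|x|<r\}$ are used — and then upgrading ``$\sigma_1$ vanishes on $\Lambda$ near $\mathrm{supp}\,u$'' to a genuine element of $\mcm(\Lambda)$ via the cutoff $\psi$. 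The remaining issues (properness of the operators, the mild growth of supports under iteration) are routine.
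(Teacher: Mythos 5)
Your proof is correct and uses exactly the same mechanism as the paper: transfer each $\tilde A\in\mcm(\tilde\Lambda)$ to a $\Psi$DO on $\mbr^2$ via the microlocal right parametrix (Proposition~\ref{ltor2}), kill the parametrix error $\tilde E_r u$ by choosing $r$ larger than the support radius so that $WF(\mcr u)$ avoids $WF'(E_r)$, observe that the transferred symbol vanishes on $\Lambda$ because $\sigma_1(\tilde A)\circ C|_{\Lambda}=\sigma_1(\tilde A)|_{\tilde\Lambda}=0$, and iterate. The only substantive organizational difference is that you run an induction on $k$ with a fresh parametrix and a fresh cutoff $\psi$ at each step, whereas the paper fixes one $r$ and peels the operators off in a single pass. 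Your cutoff $\psi$ (with $\psi\equiv 1$ near $\mathrm{supp}\,u$ and $\mathrm{supp}\,\psi\subset\{|x|<r\}$) is a genuine refinement: Proposition~\ref{ltor2} only guarantees the symbol identity $\sigma_1(A_r)=\sigma_1(\tilde A)\circ C$ on $\{|x|<r\}$, so $A_r$ itself need not lie in $\mcm(\Lambda)$ unless $\Lambda$ projects into $B_r$ (which is automatic for $\Lambda=N^*\gamma$ with $\gamma$ compact, the case the paper actually uses, but not for general $\Lambda$ as in the lemma's statement); multiplying by $\psi$ yields an honest element of $\mcm(\Lambda)$ agreeing with $A_r$ on $\mathrm{supp}\,u$, which is the cleaner way to make that step rigorous.
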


\begin{proof}
We want to consider the regularity of $\tilde{A}_1\tilde{A}_2\dots\tilde{A}_j\mcr u$ where $\tilde{A}_i\in\mcm(\tilde\Lambda)$. Suppose $\text{supp }u\subset B_r$. By Proposition \ref{ltor2}, we can find $A_i\in\Psi^1(\mbr^2)$ with $\mcr A_i = \tilde{A}_i\mcr + \tilde{E}_i$ where $WF(\tilde{E}_iv)\subset\{s^2+(\eta/\sigma)^2\ge r^2\}\cap WF(\mathscr{R}v)$ for all $v$. Since we also have $\sigma_1(A_i) = \sigma_1(\tilde{A}_i)\circ C$ and $\sigma_1(\tilde{A}_i)|_{C\circ\Lambda} = \sigma_1(\tilde{A}_i)|_{\tilde\Lambda} = 0$, it follows that in fact $A_i\in\mcm(\Lambda)$, and thus $A_{j'}A_{j'+1}\dots A_ju\in L^2(\mbr^2)$ for all $j'<j$. We have
\[\tilde{A}_j\mcr u = \mcr A_j u + \tilde{E}_ju,\quad WF'(\tilde{E}_ju)\subset \{s^2+(\eta/\sigma)^2\ge r^2\}\cap WF(\mathscr{R}u) = \emptyset\]
since $WF(\mathscr{R}u)\subset\{s^2+(\eta/\sigma)^2< r^2\}$ due to $\text{supp }u\subset B_r$. Thus, $\tilde{A}_j\mcr u = \mcr A_ju$ up to a \emph{smooth} error. Similarly, since $WF(A_{j'}A_{j'+1}\dots A_ju)\subset\{|x|<r\}$ for all $j'<j$, by induction we have $\tilde{A}_{j'}\tilde{A}_{j'+1}\dots\tilde{A_j}\mcr u = \mcr A_{j'}A_{j'+1}\dots A_ju$ up to a smooth error for all $1\le j'<j$. Taking $j' = 1$ gives the desired result upon noting that $\mcr:L^2(\mbr^2)\to H^{\ha}(\mathcal{L})$.
\end{proof}

In the case that $\Lambda = N^*\gamma$ where $\gamma = \partial D$ and $D$ is strictly convex, we have $C\circ N^*\gamma = N^*S\backslash 0$ is itself a conormal bundle of a smooth submanifold. Then $I_kH^{\ha}(\mathcal{L};N^*S) = I_kH^{\ha}(\mathcal{L};\mcv(S))$, so as a corollary we have:
\begin{cor}
\label{conv-ik}
If $\gamma = \partial D$ is the boundary of a strictly convex domain in $\mbr^2$ and $f\in {L^{\infty}I_kL^2(\mbr^2;\mcv(\gamma))}$ is compactly supported, then $\mcr f\in L^{\infty}I_kH^{\ha}(\mathcal{L};\mcv(S))$.
\end{cor}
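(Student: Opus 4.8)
The plan is to obtain the corollary by specializing Lemma \ref{iter-commute} to $\Lambda = N^*\gamma$, identifying $C\circ N^*\gamma$ with a genuine conormal bundle using strict convexity, and then adding a one-line estimate for the $L^\infty$ component.

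First I would note that $f\in L^\infty I_kL^2(\mbr^2;\mcv(\gamma))$ with compact support lies, in particular, in $I_kL^2(\mbr^2;N^*\gamma)$ and is compactly supported, so Lemma \ref{iter-commute} applies with $\Lambda = N^*\gamma$ and gives $\mcr f\in I_kH^{\ha}(\mathcal{L};\tilde\Lambda)$ with $\tilde\Lambda = C\circ N^*\gamma$. Next I would invoke strict convexity: by the computation in Section \ref{sec-radon}, when $D$ is strictly convex the curve $S\subset\mathcal{L}$ is a smoothly embedded closed curve (it has no self-intersection, since a self-intersection would correspond to a bitangent line to $\gamma$, impossible for strictly convex $D$) and $C\circ N^*\gamma = N^*S\backslash 0$. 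Since the pseudodifferential-operator-based iterated regularity space $I_kH^s(\mathcal{L};\mcm(N^*S))$ coincides with the vector-field-based space $I_kH^s(\mathcal{L};\mcv(S))$, this yields $\mcr f\in I_kH^{\ha}(\mathcal{L};\mcv(S))$.

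It then remains only to check boundedness of $\mcr f$. If $\supp f\subset B_r$ and $\|f\|_{L^\infty}<\infty$, then for every line $L$ we have $|\mcr f(L)| = |\int_{L\cap B_r}f\,d\mathcal{H}^1|\le 2r\,\|f\|_{L^\infty}$, so $\mcr f\in L^\infty(\mathcal{L})$ (indeed supported in $\{|s|<r\}$). Combining, $\mcr f\in L^\infty\cap I_kH^{\ha}(\mathcal{L};\mcv(S)) = L^\infty I_kH^{\ha}(\mathcal{L};\mcv(S))$, which is the claim.

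The only delicate ingredient is the identification $C\circ N^*\gamma = N^*S$, and this is exactly where strict convexity is used: it is precisely this step that breaks down at an inflection point, where $S$ acquires a cusp (Lemma \ref{cusp-sinogram}) and $C\circ N^*\gamma$ is no longer the conormal bundle of a smooth submanifold — the situation the remainder of the paper is devoted to analyzing. Granting that identification (already established in Section \ref{sec-radon}), the corollary follows directly from Lemma \ref{iter-commute} together with the elementary bound above, with no further work required.
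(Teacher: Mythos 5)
Your proof is correct and follows essentially the same route as the paper: apply Lemma \ref{iter-commute} with $\Lambda = N^*\gamma$, use strict convexity to identify $C\circ N^*\gamma = N^*S\backslash 0$ as a genuine conormal bundle (which is the observation the paper makes immediately before stating the corollary), and note the elementary $L^\infty$ bound on $\mcr f$. The paper's proof is a one-line invocation of the same two ingredients, so you have simply written it out in more detail.
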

The proof follows directly from Lemma \ref{iter-commute} along with the observation that $\mcr$ maps $L^{\infty}$ compactly supported functions on $\mbr^2$ to $L^{\infty}$ compactly supported functions on $\mathcal{L}$.

Next, consider the non-strictly convex case. We consider the model case near one inflection point $p_0$ on $\gamma$. We parametrize $\gamma$ locally near $p_0$ by $\gamma(t), t\in (-\delta, \delta), \delta > 0$ such that $p_0 = \gamma(0).$ Then we set $
\gamma_- = \gamma(t)|_{(-\delta, 0)}, \ \ \gamma_+ = \gamma(t)|_{(0, \delta)}.
$ 
Each $\gamma_\pm$ is strictly convex thus $\La_\pm = C\circ N^*\gamma_\pm = N^*S_\pm\backslash 0$. We know that $S_\pm$ form a cusp at $q_0$ the cusp point. Now, we need to introduce a Lagrangian distribution space for the cusp that allows us to analyze singularities of $\mcr f$ when $f$ is conormal to $\gamma.$   These spaces are introduced in Melrose \cite{Me}
 and  we follow the presentations in S\'a Barreto \cite[Section 3 and 5.2]{Sa}.

We consider the cusp $G = S_+\cup S_-$ in the sinogram. According to Arnold's result \cite{Ar}, it suffices to work with the model case that $G=\{(x, y)\in \mbr^2: x^2 = y^3\}$. Let $B = \{x = y = 0\}$ be the singular locus of $G$.  
The conormal bundle $\La_G = \text{closure of }\{N^*(G\backslash B)\}$ is a smooth closed Lagrangian submanifold and 
\beqq\label{eq-laG}
\La_G = \{(x, y, \xi, \eta)\in T^*\mbr^2: 4\eta^2 - 9y \xi^2 = 0, 3x\xi + 2y \eta = 0 \}
\eeqq
in which $(x, y, \xi, \eta)$ are the coordinates on $T^*\mbr^2$. 
We have the following result, again from Lemma \ref{iter-commute}:
 \begin{lemma}\label{lm-conormal2}
For $f \in  L^\infty I_k L^2(\mbr^2; \mcv(\gamma))$ supported near the inflection point $p_0$, we have $\mcr f\in L^\infty I_kH^{\ha}(\mbr^2; \mcm(\La_G))$.
 \end{lemma}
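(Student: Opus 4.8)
The plan is to apply Lemma \ref{iter-commute} with $\Lambda = N^*\gamma$ (restricted to a neighborhood of the inflection point $p_0$), noting that by Lemma \ref{cusp-sinogram} the image $C\circ N^*\gamma$ equals the closure of $N^*S_+\cup N^*S_-$, which is precisely the smooth Lagrangian $\La_G$ associated with the cusp $G = S_+\cup S_-$. Thus the main point is to recognize that, once we work with the $\Psi$DO-based iterated regularity spaces $I_kH^s(X;\mcm(\Lambda))$ introduced at the start of Section \ref{sec-commute}, the statement $\mcr f\in I_kH^{\ha}(\mathcal{L};\tilde\Lambda)$ from Lemma \ref{iter-commute} reads exactly as $\mcr f\in I_kH^{\ha}(\mathcal{L};\mcm(\La_G))$ in this setting, since $\tilde\Lambda = C\circ N^*\gamma = \La_G$. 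The $L^\infty$ part is handled as in Corollary \ref{conv-ik}: the Radon transform maps compactly supported $L^\infty$ functions on $\mbr^2$ to compactly supported $L^\infty$ functions on $\mathcal{L}$ (an elementary integral bound, since integrating an $L^\infty$ compactly supported function along a line gives a bounded quantity, and the support in $\mathcal{L}$ is compact because $\supp f$ is bounded).

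Concretely, I would proceed as follows. First, observe that $f\in L^\infty I_kL^2(\mbr^2;\mcv(\gamma)) \subset I_kL^2(\mbr^2; N^*\gamma)$ is compactly supported, so Lemma \ref{iter-commute} applies directly and yields $\mcr f\in I_kH^{\ha}(\mathcal{L};C\circ N^*\gamma)$. Second, invoke Lemma \ref{cusp-sinogram}: since $f$ is supported near the single inflection point $p_0$ (and $\gamma$ there is the model cubic curve), we have $C\circ N^*\gamma = \overline{N^*S_+\cup N^*S_-} = \La_G$, the smooth Lagrangian \eqref{eq-laG} conormal to the model cusp. Hence $\mcr f\in I_kH^{\ha}(\mathcal{L};\mcm(\La_G))$ by definition of the $\Psi$DO-based iterated regularity spaces (the class $\mcm(\La_G)$ being well defined since $\La_G$ is a smooth Lagrangian). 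Third, add back the $L^\infty$ bound: $\mcr f\in L^\infty(\mathcal{L})$ with compact support because $f$ is bounded and compactly supported, so intersecting with the previous space gives $\mcr f\in L^\infty I_kH^{\ha}(\mathcal{L};\mcm(\La_G))$.

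The main (and essentially only) obstacle is bookkeeping about localization and about the identification $C\circ N^*\gamma = \La_G$. One must be careful that Lemma \ref{iter-commute} was phrased for a genuine Lagrangian $\Lambda$ and its image $C\circ\Lambda$; here $N^*\gamma$ over the model piece of $\gamma$ is Lagrangian, and its image under the bijective canonical relation $C$ is $\La_G$ only after taking closure, so one should check that the regularity conclusion is insensitive to the (measure-zero, conic) discrepancy between $N^*S_+\cup N^*S_-$ and its closure $\La_G$ — this is automatic since $\mcm(\La_G)$ is defined via vanishing of the principal symbol on $\La_G\setminus 0$, and the vanishing on the open dense part $N^*S_\pm$ extends by continuity. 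A minor additional point is to confirm that cutting off $f$ to a neighborhood of $p_0$ (so that only the one inflection point is relevant) does not affect the conormality class, which follows because multiplication by a smooth cutoff preserves $L^\infty I_kL^2(\mbr^2;\mcv(\gamma))$. Beyond these remarks the statement is an immediate corollary of Lemma \ref{iter-commute} together with the cusp computation of Lemma \ref{cusp-sinogram}.
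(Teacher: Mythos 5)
Your proof is correct and takes essentially the same route as the paper: the paper introduces the lemma with ``We have the following result, again from Lemma~\ref{iter-commute}'' and supplies no separate argument, the intended proof being exactly your combination of Lemma~\ref{iter-commute} (with $\Lambda = N^*\gamma$ near $p_0$), the identification $C\circ N^*\gamma = \overline{N^*S_+\cup N^*S_-} = \La_G$ from Lemma~\ref{cusp-sinogram}, and the $L^\infty$-preservation observation already used in Corollary~\ref{conv-ik}. Your bookkeeping remarks about localization and the closure of $N^*S_+\cup N^*S_-$ are sensible and in line with the paper's treatment.
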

From these local descriptions, we can piece them together to describe a space to which $\mcr f$ (as well as $F(\mcr f)$ for a smooth function $F$) belongs if $f\in L^\infty I_kL^2(\mbr^2;\mcv(\gamma))$ where $\gamma$ satisfies assumptions (A1) and (A2). This will be done in the next section.

\section{The nonlinear interactions}\label{sec-non}
We analyze the singularities in $F(\mcr f)$ where $F$ is a smooth function. Our goal is to find a Lagrangian distribution space $\mca$ which contains both $\mcr f$ and $F(\mcr f)$. It suffices to find a space which contains $\mcr f$ and is also a $C^\infty$ algebra, so that then $F(\mcr f)$ belongs there as well.   
We recall that we assumed $\gamma = \cup_{i = 0}^N \overline \gamma_i$ in which $\gamma_i$ are disjoint open curves and strictly convex. The inflection points are denoted by $p_i, i = 1, 2, \cdots N. $ We  denote codimension one submanifolds $S_i \subset \mathcal{L}$ such that $C\circ N^*\gamma_i = N^*S_i\backslash 0. $   

When $f$ is conormal to $\gamma$, we know that $\mcr f$ is conormal to $S_i$ and  singular at the possible cusp points. To find the $C^\infty$ algebra $\mca$, we need to find out how the singular support of $\mcr f$ could possibly meet. We observe that it suffices to find $\mca$ locally because the action of the smooth function $F$ is local. Recall that we made the following assumption to avoid technical discussions related to multiple interactions: 
 any straight line $L \subset \mbr^2$ which is tangent to $\gamma$  is either tangent to (at most) two strictly convex pieces $\gamma_i, \gamma_j$, or tangent to $\gamma$ at one inflection point $p_i$.   

\subsection{The distribution space away from cusp points}
First of all, if $S_i, S_j$ intersect, they must intersect transversally which can be seen as follows. Suppose $S_i, S_j$ intersect at $p_{ij}$. Then by observation \eqref{tang-line}, the corresponding line $L$ in $\mbr^2$ must be tangent to both $\gamma_i$ and $\gamma_j$. Suppose that the intersection is not transversal and hence tangential. Then the (co)normal vectors of $S_i$ and $S_j$ are linearly dependent at $p_{ij}$, and in particular $N^*S_i\cap N^*S_j$ is a one-dimensional linear space. However, since the canonical relation $C$ is bijective outside the zero section, this can happen only if $N^*\gamma_i\cap N^*\gamma_j$ is one dimensional, which implies that $\gamma_i$ intersects $\gamma_j$ tangentially, contradicting our geometric assumption on $D$.


Now we look for the $C^\infty$ algebra near the intersection $S_i\cap S_j = \{p_{ij} \}$ which is a finite point set.  Locally, it suffices to consider the model case on $\mbr^2$ see \cite{MR1}, where $S_i = \{x = 0\}, S_j = \{y = 0\}$. Then the Lie algebra $\mcv(S_i, S_j)$ is spanned by $x\p_x, y\p_y$. We will consider the intersection of $I_kL^2(\mbr^2; \mcv(S_i, S_j))$  
with $L^\infty$, denoted by $L^\infty I_kL^2(\mbr^2; \mcv(S_i, S_j)) $. From  \cite{MR1}, we know that this is a $C^\infty$ algebra. Moreover, the space is microlocally complete, and we can decompose the space into
\beq
L^\infty I_kL^2(\mbr^2; \mcv(S_i, S_j)) = L^\infty I_kL^2(\mbr^2; \mcv(S_i, p_{ij}))  +  
L^\infty I_kL^2(\mbr^2; \mcv(S_j, p_{ij})).  
\eeq
Moreover, we note that
\[L^\infty I_kL^2(\mbr^2;\mcv(S_i,S_j))\supsetneq L^\infty I_kL^2(\mbr^2;\mcv(S_i))+L^\infty I_kL^2(\mbr^2;\mcv(S_j)).\]
Note that our X-ray data $\mcr f$ will locally belong to spaces of the form on the right-hand side, though nonlinear compositions need not return it there, which is why we are interested in using the space on the left-hand side. See \cite{PUW1} for an example of essentially similar behavior.


\begin{remark}
\label{sob-order}
Note that our data $\mcr f$ actually belongs to $H^{\ha}$-based iterated regularity spaces, so we are sacrificing Sobolev order in using the $L^2$-based spaces. Nonetheless, we still recover the qualitative aspects of the nonlinear interactions, so we will ignore the optimal Sobolev order in this paper.
\end{remark}

\subsection{The distribution space near cusp points}
When the cusp is involved, we cannot use the space $I_kL^2(\mbr^2; \mcm(\La_G))$ because  this is not an algebra as we show below.  
We shall use the {\em marked Lagrangian distribution} introduced by Melrose \cite{Me}, see also S\'a Barreto \cite{Sa}. 
 If $\La$ is a smooth closed embedded conic Lagrangian submanifold of $T^*\mbr^n\backslash 0$ and $\Sigma\subset \La$ is a smooth embedded conic hypersurface in $\La$, let 
\beq
\mcm(\La, \Sigma) = \{A\in \Psi^1(\mbr^n): \sigma_1(P) = 0 \text{ on $\La$ and $H_p$ is tangent to $\Sigma$}\}
\eeq
Then the marked Lagrangian distribution is defined as 
\beq
I_kL^2_\loc(\mbr^n, \mcm(\La, \Sigma)) = \{u\in L^2_\loc(\mbr^n): \mcm(\La, \Sigma)^j u\subset L^2_\loc, j\leq k.\}
\eeq
It suffices to consider  the model case that the cusp  $G =\{(x, y)\in \mbr^2: x^2 = y^3\}$. Let $B = \{x = y = 0\}$ be the singular locus of $G$. Let $\La_B = N^*B\backslash 0 =  \{(x, y, \xi, \eta): x = y = 0, (\xi, \eta)\neq 0\}$. Then $\La_G$ defined in \eqref{eq-laG} does not intersect $\La_B$ cleanly and 
\beq
\Sigma \defeq \La_G \cap \La_B = \{x = y = \eta = 0\}
\eeq
is a line in both $\La_G$ and $\La_B.$ We define
\beq
J_{k}(\mbr^2; G) \defeq I_k L^2_\loc(\mbr^2, \mcm(\La_G, \Sigma)) + I_k L^2_\loc(\mbr^2, \mcm(\La_B, \Sigma)).
\eeq
Using two blow-ups, Melrose \cite{Me} showed that this is a $C^\infty$ algebra. Here, we follow the non-homogeneous blow-ups introduced in   S\'a Barreto \cite[Section 3 and Section 5.2]{Sa}. 
First of all, we use $(x_1, x_2)$ for local coordinates on $\mbr^2$ near the cusp point $B = (0, 0)$. Then we blow up the cusp point using $x_1 = r w_1^2, x_2 = r w_2^3$ where $(w_1, w_2)$ is on 
\beq
S_{3-2}^1 \defeq \{(w_1, w_2): w_1^4 + w_2^6 = 1\}.
\eeq
We obtain the blown up space $X_{3-2} = S_{3-2}^1\times [0, \infty)$. Let $\beta: X_{3-2} \rightarrow \mbr^2$ be the blow down map. Then we let $G^{(1)} = \beta^{-1}(G)$ be the lift of the cusp to the blown-up space. Let $\mcw_G$ be the Lie algebra of vector fields in $X_{3-2}$ tangent to $G^{(1)}$ and to $\p X_{3-2}$. It is proved in  \cite[Theorem 5.2]{Sa} that 
\beq
\beta^*: J_k(\mbr^2; G) \rightarrow I_k L^2_{loc}(X_{3-2}, \mcw_G)
\eeq
is an isomorphism. Then it follows from Gagliardo-Nirenberg inequalities that the space is a $C^\infty$ algebra. 

S\'a Barreto showed in \cite{Sa} that $I_kL^2(\mbr^2; \mcm(\La_G)) \subset J_k(\mbr^2; G)$. However, it seems not known whether $L^\infty I_kL^2(\mbr^2; \mcm(\La_G))$ itself is a $C^\infty$ algebra. We clarify it below and leave the proof to the final section. This shows that the nonlinear interaction of cusp type singularities could produce new singularities, and it is necessary to work with a larger space. 
\begin{lemma}
\label{new-sing} 
For $k\ge 3$, there exist $u \in L^\infty I_kL^2(\mbr^2; \mcm(\La_G))$ such that $u^2\not\in I_kL^2(\mbr^2; \mcm(\La_G))$.
\end{lemma}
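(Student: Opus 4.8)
The plan is to exhibit, for all $k\ge 3$ at once, a single real $u$ which is a classical conormal (Lagrangian) distribution associated to the smooth Lagrangian $\La_G$, localized near the cusp point $B$ and of the borderline order at which it is still bounded, and then to show that $u^2$ already fails to have third--order iterated regularity with respect to $\mcm(\La_G)$. The underlying point is that $\mcm(\La_G)$ only detects singularities lying along $\La_G$, whereas squaring a distribution which near $B$ is conormal to the two branches of the cusp produces an extra singularity at $B$ of $\La_B$-type --- attached to the cusp tip rather than conormal to $\La_G$.

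First I would construct $u$. Since $\La_G$ is a smooth closed conic Lagrangian whose base projection is the curve $G$ (Section~\ref{sec-cusp0}, and \cite{Me, Sa}), there exist classical Lagrangian distributions $u\in I^{m}(\mbr^2,\La_G)$; away from $B$ such a $u$ is an ordinary conormal distribution to the smooth curve $G\backslash B$, and near $B$ it glues smoothly across $\Sigma$. For any $A\in\mcm(\La_G)$, vanishing of $\sigma_1(A)$ on $\La_G$ gives $Au\in I^{m}(\mbr^2,\La_G)$ with no loss of order, so iterating, $A_1\cdots A_ju\in I^m(\mbr^2,\La_G)\subset L^2_{loc}$ for all $j$ once $m$ is sufficiently negative. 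Taking $m$ at the threshold for which $I^m(\mbr^2,\La_G)\subset L^\infty$ and replacing $u$ by $u+\bar u$ to make it real, we obtain $u\in L^\infty\cap I_\infty L^2(\mbr^2;\mcm(\La_G))$, hence $u\in L^\infty I_kL^2(\mbr^2;\mcm(\La_G))$ for every $k$; moreover $u^2\in J_k(\mbr^2;G)$ since $J_k(\mbr^2;G)$ is a $C^\infty$ algebra containing $I_kL^2(\mbr^2;\mcm(\La_G))$. So it only remains to decide where $u^2$ sits relative to $I_3L^2(\mbr^2;\mcm(\La_G))$.

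Next I would isolate the new singularity of $u^2$ and show it is not detected by $\mcm(\La_G)$ at level three. Over $y>0$ one has $\La_G=N^*G_+\sqcup N^*G_-$, the conormals to the two smooth branches $G_\pm=\{x=\pm y^{3/2}\}$, which are tangent to one another (and to $\{x=0\}$) at $B$; correspondingly $u=v_++v_-$ there with $v_\pm$ conormal to $G_\pm$, so $u^2=v_+^2+v_-^2+2v_+v_-$. The terms $v_\pm^2$ stay conormal to $G_\pm$, hence to $\La_G$, but the cross term $2v_+v_-$, being a product of conormal distributions to two \emph{tangent} curves, carries --- besides a piece conormal to $\La_G$ --- a component $w$ attached to the tangency point $B$, of $\La_B$-type. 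This is the tangential analogue of the creation of new singularities in the transversal case of \cite{PUW1} and in the theory of nonlinear wave equations \cite{MR1, MR2}; the cleanest way to see and quantify $w$ is to blow up $B$ via the map $\beta:X_{3-2}\to\mbr^2$ (following \cite[Sec.~3, 5.2]{Sa}): $\beta^*u$ is conormal to $G^{(1)}$ and regular at the front face $\p X_{3-2}$, whereas $\beta^*(u^2)=(\beta^*u)^2$ acquires a nonzero component conormal to $\p X_{3-2}$ of strictly worse order, and $w$ is the push--forward of that component. Now, modulo $\Psi^0(\mbr^2)$, $\mcm(\La_G)$ is generated over $\Psi^0(\mbr^2)$ by $Q_1=3xD_x+2yD_y$ and $Q_2=|D|^{-1}(4D_y^2-9yD_x^2)$, whose symbols are the two defining functions of $\La_G$ in \eqref{eq-laG} suitably homogenized, so $I_kL^2(\mbr^2;\mcm(\La_G))=\{v\,:\,Q_{i_1}\cdots Q_{i_j}v\in L^2_{loc},\ j\le k,\ i_\ell\in\{1,2\}\}$; on $w$, which is attached to $B$ but not conormal to $\La_G$, the generator $Q_2$ --- which does not lie in $\mcm(\La_G,\Sigma)$, its Hamilton field being transverse to the marking $\Sigma$ --- no longer acts tangentially, and a computation with the model form of $w$ (equivalently, reading off the order of the $\p X_{3-2}$-conormal part of $\beta^*(u^2)$) shows that some length--three word $Q_{i_1}Q_{i_2}Q_{i_3}w$ is not in $L^2_{loc}$, while the conormal--to--$\La_G$ part of $u^2$ stays in $L^2_{loc}$ under that word. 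Hence $u^2\notin I_3L^2(\mbr^2;\mcm(\La_G))$, and since $I_kL^2(\mbr^2;\mcm(\La_G))\subset I_3L^2(\mbr^2;\mcm(\La_G))$ for $k\ge3$, the same $u$ serves for all such $k$.

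The hard part is the quantitative bookkeeping tying together the last two claims: one must track the order of $u$ through the nonlinear product and through the \emph{non-homogeneous} blow--up $\beta$ (using the Gagliardo-Nirenberg-type inequalities of \cite{MR1, Sa}) so as to compute the order of the $\La_B$-component $w$ at $\p X_{3-2}$, and then verify that this order lands in precisely the range for which three --- and not one or two --- applications of the generators $Q_1,Q_2$ are needed to leave $L^2$; this last point is what accounts for the hypothesis $k\ge 3$. Keeping careful track of the marking $\Sigma$, i.e. of the difference between $\mcm(\La_G)$ and $\mcm(\La_G,\Sigma)$, is essential here, since it is exactly $Q_2$ (which is not tangent to $\Sigma$) that renders the new singularity $w$ visible.
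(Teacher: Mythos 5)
The proposal is correct in its high-level strategy but contains a genuine gap at exactly the step that does the work. The paper proves the lemma by producing a concrete $v$ of the form \eqref{cusp-conorm}, invoking Zworski's Fourier-transform asymptotics (Lemma~\ref{zw-lemma}) to show that $\mathcal{F}(v^2)$ fails to decay rapidly in a direction $(\epsilon,1)$ transversal to $\La_G$ over the cusp point $B$, deducing via Lemmas~\ref{schwartz-infinity} and~\ref{fourier-wf} that $(0,(\epsilon,1))\in WF(v^2)\setminus\La_G$, and concluding that $v^2$ cannot be a classical Lagrangian distribution associated to $\La_G$. You instead propose to split $u=v_++v_-$ near $B$, attribute the new singularity to the cross term $2v_+v_-$ of tangentially conormal pieces, lift to the non-homogeneous blow-up $X_{3-2}$, and read off an order-counting argument with the generators $Q_1$, $Q_2$ of $\mcm(\La_G)$. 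That is a different and in principle more structural route, and if it worked it would pin down the threshold $k=3$ more explicitly than the paper does (the paper, as written, in fact only establishes failure of $I_kL^2$-regularity for some $k$, not a specific one).

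However, the step you label ``the hard part'' --- that some length-three word $Q_{i_1}Q_{i_2}Q_{i_3}w\notin L^2_{\loc}$, ``shown by a computation with the model form of $w$'' --- is precisely the entire content of the lemma and is not carried out; the claim that the order of the $\p X_{3-2}$-conormal component of $\beta^*(u^2)$ can simply be ``read off'' hides the stationary-phase estimate that Zworski's Lemma~4 provides. There is also an imprecision in the decomposition: the branches $G_\pm=\{x=\pm y^{3/2},\,y>0\}$ are not closed submanifolds, and the Lagrangian $\La_G$ does not literally split as a disjoint union of their conormals near $B$ --- the branches merge along the marking $\Sigma$, so $u$ is not a sum of two distributions each conormal to a smooth closed curve, and the control of $v_\pm$ (and hence of $v_\pm^2$ and $v_+v_-$) in a punctured neighborhood of $B$ must degenerate as one approaches $B$. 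Making ``$v_\pm^2$ stays conormal to $\La_G$'' and ``$2v_+v_-$ produces a $\La_B$-component of computable order'' rigorous is exactly the kind of uniform-in-scale estimate that Zworski's convolution calculation supplies; without an actual estimate (or a citation of one), the argument is a plausible road map but not a proof.
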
 
The proof relies on an estimate of the Fourier transform of the square of a Lagrangian distribution associated to $\La_G$ first proven in \cite{Zw}; see Section \ref{sec-new} for more details.

Finally, we consider how cusps could possibly interact with other cusps or curves. Indeed, note that we cannot have two different cusps sharing a common cusp point, since by \eqref{tang-line} this would mean that the corresponding line is tangent to two different inflection points, which is ruled out by our assumption. Similarly, if a cusp $G = S_{i, +}\cup S_{i, -}$ intersects another curve $S_j$ at the cusp point, then there is a line tangent to $\gamma$ at the inflection point and a strictly convex piece of $\gamma$ which is also ruled out by our assumption.  

\subsection{The algebra $\mca$}
We now define our $C^{\infty}$ algebra as follows: let $\mca$ consist of all compactly supported $L^{\infty}(\mathcal{L})$ functions $g$ which satisfy the properties that
\[
\begin{gathered}
\text{if }\chi\in C_c^{\infty}(\mathcal{L})\text{ satisfies that }\text{supp }\chi\text{ is contained in some open set diffeomorphic to }\mbr^2\\
\text{and contains no cusp points and exactly one intersection point }p_{ij}\in S_i\cap S_j
\end{gathered}
\]
then $\chi g\in I_kL^2(\mbr^2; \mcv(S_i, S_j))$ (where we identify $\mbr^2$ with a neighborhood of $\text{supp }\chi$), while if instead
\[\begin{gathered}\text{supp }\chi\text{ is contained in some open set diffeomorphic to }\mbr^2,\text{contains no intersection points},\\
\text{and contains exactly one cusp point corresponding to the cusp }G
\end{gathered}\]
then $\chi g\in J_k(\mbr^2;G)$.

Note that $\mca$ really is a $C^{\infty}$ algebra. Indeed, for any $F\in C^{\infty}(\mathbb{R})$ and $g\in\mca$, if $\chi$ satisfies the first condition above, then there exists $\tilde\chi\in C_c^{\infty}(\mathcal{L})$ which satisfies the same support properties as $\chi$, but with the additional property that $\tilde\chi\equiv 1$ on $\text{supp }\chi$. We then have $\chi F(g) = \chi F(\tilde\chi g)$. Since $\tilde\chi g\in L^{\infty}I_kL^2(\mbr^2;\mcv(S_i,S_j))$ (by the support properties of $\tilde\chi$), and the latter is a $C^{\infty}$ algebra, it follows that $F(\tilde\chi g)$ is in $L^{\infty}I_kL^2(\mbr^2;\mcv(S_i,S_j))$ as well. A similar argument takes care of the case when $\chi$ satisfies the second condition.

The main use of this algebra is to provide a space to which $\mcr f$ belongs, and hence a space where the nonlinear composition $F(\mcr f)$ belongs as well:

\begin{lemma}
\label{art-alg}
If $f\in L^{\infty}I_kL^2(\mbr^2;\mcv(\gamma))$, then $\mcr f\in\mca$, and hence $F(\mcr f)\in\mca$ for any $C^{\infty}$ function $F$.
\end{lemma}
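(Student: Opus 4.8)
The plan is to verify that $\mcr f$ satisfies the two local membership conditions in the definition of $\mca$, and then invoke the fact, established in the paragraph following the definition of $\mca$, that $\mca$ is a $C^\infty$ algebra, so that $F(\mcr f)\in\mca$ automatically once $\mcr f\in\mca$. That $\mcr f$ is compactly supported in $\mathcal{L}$ and lies in $L^\infty(\mathcal{L})$ follows since $f$ is compactly supported and in $L^\infty$, and $\mcr$ preserves both properties (as noted after Corollary \ref{conv-ik}); so the real content is the two localized conormal/marked-Lagrangian statements.

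First I would fix $\chi\in C_c^\infty(\mathcal{L})$ whose support lies in an open set diffeomorphic to $\mbr^2$ containing no cusp points and exactly one transversal intersection $p_{ij}\in S_i\cap S_j$. Using a partition of unity on $\gamma$ subordinate to the decomposition $\gamma = \cup_{i=0}^N\overline\gamma_i$, write $f = \sum f_\ell$ with each $f_\ell\in L^\infty I_kL^2(\mbr^2;\mcv(\gamma_\ell))$ compactly supported near a strictly convex piece (plus pieces supported near inflection points, which by assumption (A2) cannot contribute singularities meeting the region near $p_{ij}$ — any tangent line through a point of $S_i\cap S_j$ is tangent to two non-inflection pieces). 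By Corollary \ref{conv-ik}, each $\mcr f_\ell$ lies in $L^\infty I_kH^{\ha}(\mathcal{L};\mcv(S_\ell))\subset L^\infty I_kL^2(\mathcal{L};\mcv(S_\ell))$. Microlocally near $p_{ij}$ the only pieces that are singular are $\mcr f_i$ and $\mcr f_j$, and away from $p_{ij}$ (but inside $\supp\chi$) these are each conormal to a single smooth curve; so $\chi\,\mcr f = \chi\,\mcr f_i + \chi\,\mcr f_j + (\text{smooth})$, and since $I_kL^2(\mbr^2;\mcv(S_i,S_j))\supset I_kL^2(\mbr^2;\mcv(S_i)) + I_kL^2(\mbr^2;\mcv(S_j))$, we conclude $\chi\,\mcr f\in I_kL^2(\mbr^2;\mcv(S_i,S_j))$, as required.

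For the second condition, fix $\chi$ supported in a region diffeomorphic to $\mbr^2$ containing exactly one cusp point, corresponding to a cusp $G = S_{i,+}\cup S_{i,-}$ arising from a simple inflection point $p_i$, and no transversal intersection points. Again decompose $f$ via partition of unity; the only piece contributing a singularity in this region is the piece $f_i$ of $f$ supported near $p_i$ (the assumption (A2) rules out another curve $S_j$ or another cusp meeting the cusp point, as discussed in the excerpt). By Lemma \ref{lm-conormal2}, $\mcr f_i\in L^\infty I_kH^{\ha}(\mbr^2;\mcm(\La_G))\subset I_kL^2(\mbr^2;\mcm(\La_G))$, and by the result of S\'a Barreto quoted in the excerpt, $I_kL^2(\mbr^2;\mcm(\La_G))\subset J_k(\mbr^2;G)$. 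Hence $\chi\,\mcr f = \chi\,\mcr f_i + (\text{smooth})\in J_k(\mbr^2;G)$. This gives $\mcr f\in\mca$; since $\mca$ is a $C^\infty$ algebra, $F(\mcr f)\in\mca$ for every $F\in C^\infty(\mbr)$.

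I expect the main obstacle to be the bookkeeping that makes the localization rigorous: one must be careful that cutting off $f$ near $p_i$ or near a convex piece and then applying $\mcr$ is compatible with cutting off the result near the corresponding cusp point or curve in $\mathcal{L}$ — i.e., that the tails of $\mcr f_\ell$ entering $\supp\chi$ from other pieces $\ell$ are genuinely smooth there. This is exactly where assumptions (A1) and (A2) are used: they guarantee that the singular supports $S_\ell$ (and cusps) meeting any small neighborhood in $\mathcal{L}$ are exactly the ones accounted for in the definition of $\mca$, with at most a double interaction and no coincidence of a cusp with another curve or cusp. Verifying this matching of singular supports, via \eqref{tang-line} relating tangent lines to intersections of the $S_\ell$, is the one genuinely geometric point; the rest is an application of Corollary \ref{conv-ik}, Lemma \ref{lm-conormal2}, and the algebra property of $\mca$.
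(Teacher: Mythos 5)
Your proof is correct and takes essentially the same approach as the paper: decompose $f$ via a partition of unity subordinate to $\gamma = \cup\overline{\gamma_i}$ into pieces supported either near a strictly convex arc or near an inflection point, apply Corollary \ref{conv-ik} and Lemma \ref{lm-conormal2} to place each transformed piece in the relevant local conormal or marked-Lagrangian space, and conclude via the $C^{\infty}$-algebra property of $\mca$. Your version is slightly more explicit in verifying the $\chi$-cutoff conditions on the sinogram side and in invoking S\'a Barreto's inclusion $I_kL^2(\mcm(\La_G))\subset J_k$, both of which are left implicit in the paper's shorter argument, but the substance is identical.
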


\begin{proof}
Recall the decomposition $\gamma = \cup{\overline{\gamma_i}}$ where each $\gamma_i$ is a strictly convex curve. Take a partition of unity $1 = \sum{\chi_i} + \sum{\chi_{ij}}$ on $\mbr^2$, where $\text{supp }\chi_{ij}$ is contained in a neighborhood of an inflection point in $\overline{\gamma_i}\cap\overline{\gamma_j}$, and $(\text{supp }\chi_i\cap\gamma)\subset\gamma_i$. Then $f = \sum{f_i} + \sum{f_{ij}}$ where $f_i = \chi_if$ and $f_{ij} = \chi_{ij}f$. We then have $f_i\in L^{\infty}I_kL^2(\mbr^2;\mcv(\gamma_i))$, and hence $\mcr f_i\in L^{\infty}I_kH^{1/2}(\mathcal{L};\mcv(S_i))\subset\mca$ by Lemma \ref{conv-ik}. Similarly, $f_{ij}$ is supported near an inflection point, so for the corresponding cusp point $G\in\mathcal{L}$, we have $\mcr f_{ij}\in L^{\infty}I_kH^{1/2}(\mathcal{L};\mcm(\La_G))\subset\mca$. It follows that $\mcr f = \sum{\mcr f_i} + \sum{\mcr f_{ij}}\in\mca$, and hence so is $F(\mcr f)$ by the $C^{\infty}$ algebra property of $\mca$.
\end{proof}
\begin{remark}
While, away from the cusp points, we can write $\mcr f$ as a sum of pieces each belonging to $L^{\infty}I_kH^{1/2}(\mathcal{L};\mcv(S_i))$, the same is not true once we apply our nonlinear composition $F$, mainly due to the interactions near the intersections $S_i\cap S_j$. This is one reason why we used the spaces $L^{\infty}I_kL^2(\mathcal{L};\mcv(S_i,S_j))$ in the construction of our $C^{\infty}$ algebra $\mca$; this is similar to the behavior investigated in \cite{PUW1}.
\end{remark}

\section{Characterization of the artifacts}\label{sec-char}
At last, we consider the term $\mcr^*\mci^{-1}F(\mcr f)$ when $F(\mcr f)\in \mca.$  It suffices to study the localized problem where $F(\mcr f)$ is supported either near a cusp   or near a transversal intersection. We continue using the notations in Section \ref{sec-non}. 

We first consider the local situation near a transversal intersection where $S_i$ and $S_j$ intersect at one point $p_{ij} = [(s_0,\phi_0)]\in\mathcal{L}$. Recall that the algebra $\mca$ locally resembles the space
\beq
L^\infty I_kL^2(\mbr^2; \mcv(S_i, S_j)) = L^\infty I_kL^2(\mbr^2; \mcv(S_i, p_{ij}))  +  
L^\infty I_kL^2(\mbr^2; \mcv(S_j, p_{ij}))  
\eeq
near $p_{ij}$. Note that by \eqref{can-rel} we have that for $(x,\xi)\in T^*\mbr^2\backslash 0$ we have
\[C(x,\xi)\in N^*\{[(s_0,\phi_0)]\}\iff s_0 = x\cdot\theta_{\phi_0}\text{ and }\xi\text{ is parallel to }\theta_{\phi_0}\text{, i.e. }(x,\xi)\in N^*\{x\cdot\theta_{\phi_0} = s_0\}.\]
Thus, $C^{-1}\circ N^*p_{ij} = N^*L$, where $L$ is a line tangent to $\gamma$ at $q_L, e_L \in \gamma$ which are non-inflection points.
We assumed that the tangency is of finite order. For such $L$, we consider a Lagrangian distribution space
 \beq
 \begin{gathered}
\mcb^s_k(\mbr^2; L) \defeq I_k H^s(\mbr^2; \mcv(L, \gamma))\\
= I_k H^s(\mbr^2; \mcv(\gamma, e_L)) + I_k H^s(\mbr^2; \mcv(L, e_L)) 
+ I_k H^s (\mbr^2; \mcv(\gamma, q_L)) + I_k H^s(\mbr^2; \mcv(L, q_L)) 
\end{gathered}
 \eeq
 It is possible to  modify the proof in \cite{MR1, Sa} and show that 
$L^\infty \cap \mcb_k^s $ is a $C^\infty$ algebra for suitable $s$. However, we will not pursue it here as it is not needed.

\begin{lemma}
\label{transverse-backproj}
Suppose $g\in L^{\infty}(\mathcal{L})$ is supported so that $\text{supp }g$ is compact, does not contain any cusp points, and contains exactly one intersection point $L\in\mathcal{L}$, corresponding to $S_i$ and $S_j$. If furthermore we have $g\in I_kL^2(\mathcal{L}; \mcv(S_i, S_j))$, then $\mcr^*\mci^{-1}g \in 
\mcb^{-\ha}_k(\mbr^2; L)$. 
\end{lemma}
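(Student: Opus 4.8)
The plan is to transport the iterated regularity of $g$ through $\mcr^*\mci^{-1}$ by commuting vector fields across it via Corollary~\ref{backprojcomm}, i.e.\ to run the argument of Lemma~\ref{iter-commute} backwards (with Corollary~\ref{backprojcomm} in place of Proposition~\ref{ltor2}). Since $g\in L^\infty I_kL^2(\mathcal{L};\mcv(S_i,S_j))$ and this space is microlocally complete with the decomposition recalled in Section~\ref{sec-non}, I would first write $g=g_i+g_j$ with $g_i\in L^\infty I_kL^2(\mathcal{L};\mcv(S_i,p_{ij}))$ and $g_j\in L^\infty I_kL^2(\mathcal{L};\mcv(S_j,p_{ij}))$; by symmetry it suffices to treat $g_i$. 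Recall from \eqref{tang-line} and the bijectivity of $C$ that $C$ carries $N^*\gamma_i$ onto $N^*S_i$ and $N^*L$ onto $N^*p_{ij}=T^*_{p_{ij}}\mathcal{L}$, where $L$ is the common tangent line to $\gamma$ at $q_L\in\gamma_i$ and $e_L\in\gamma_j$. It is then enough to show that for every finite family $V_1,\dots,V_l\in\mcv(\gamma,L)$ (which we may take with compactly supported coefficients) with $l\le k$, one has $V_1\cdots V_l\,\mcr^*\mci^{-1}g_i\in H^{-\ha}_{\mathrm{loc}}(\mbr^2)$, for this is precisely the assertion $\mcr^*\mci^{-1}g_i\in I_kH^{-\ha}(\mbr^2;\mcv(\gamma,L))=\mcb^{-\ha}_k(\mbr^2;L)$; the unbounded portion of $L$ only carries conormal regularity to a smooth line and causes no trouble.

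For the commuting step, each $V_m$ is a first order differential operator, hence an element of $\Psi^1(\mbr^2)$ with $WF'(V_m)$ projecting to a compact set in $x$, so Corollary~\ref{backprojcomm} produces $\tilde V_m\in\Psi^1(\mathcal{L})$ with $V_m\mcr^*\mci^{-1}=\mcr^*\mci^{-1}\tilde V_m$ modulo a smoothing operator and $\sigma_1(\tilde V_m)=\sigma_1(V_m)\circ C^{-1}$. Since $V_m$ is tangent to both $\gamma$ and $L$, its principal symbol vanishes on $N^*\gamma\cup N^*L$, so $\sigma_1(\tilde V_m)$ vanishes on $C(N^*\gamma)\cup C(N^*L)$, in particular on $N^*S_i$ and on the part of $T^*_{p_{ij}}\mathcal{L}$ with $\sigma\ne0$; and since the operators furnished by Corollary~\ref{backprojcomm} are microlocally trivial near $\{\sigma=0\}$ (being built by composition with $\mcr$, whose canonical relation has image in $\{\sigma\ne0\}$), $\sigma_1(\tilde V_m)$ in fact vanishes on all of $N^*S_i\cup T^*_{p_{ij}}\mathcal{L}$. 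Because $p_{ij}\in S_i$ gives a clean intersection, the space $I_kL^2(\mathcal{L};\mcv(S_i,p_{ij}))$ has the microlocal description as the $L^2$ functions stable under $k$-fold application of order-one operators whose symbol vanishes on $N^*S_i\cup T^*_{p_{ij}}\mathcal{L}$ (microlocal completeness, \cite{MR1}). Consequently $\tilde V_1\cdots\tilde V_l g_i\in L^2(\mathcal{L})$, and after passing to properly supported representatives it is compactly supported, so $\mcr^*\mci^{-1}\tilde V_1\cdots\tilde V_l g_i\in H^{-\ha}_{\mathrm{loc}}(\mbr^2)$ by the mapping property $\mcr^*\mci^{-1}\colon L^2_{\mathrm{comp}}(\mathcal{L})\to H^{-\ha}_{\mathrm{loc}}(\mbr^2)$. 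Telescoping the relations $V_m\mcr^*\mci^{-1}=\mcr^*\mci^{-1}\tilde V_m+(\text{smoothing})$ through the $l$-fold product gives this term plus finitely many remainders, each containing a smoothing factor applied to a distribution $\tilde V_{m+1}\cdots\tilde V_l g_i\in H^{-(l-m)}$ and hence smooth; so $V_1\cdots V_l\,\mcr^*\mci^{-1}g_i\in H^{-\ha}_{\mathrm{loc}}$, and adding the symmetric contribution of $g_j$ finishes the proof.

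I expect the difficulty to be technical rather than conceptual. The delicate points are: verifying carefully that the operators supplied by Corollary~\ref{backprojcomm} are microlocally trivial near $\sigma=0$, so that $\tilde V_m$ really annihilates $g_i$ along the whole point-conormal $T^*_{p_{ij}}\mathcal{L}$ and not merely over $\{\sigma\ne0\}$; keeping careful track of properly supported representatives and of the smoothing errors accumulated through the $l$-fold product, so that the remainder terms genuinely land in $C^\infty$; and the clean-intersection microlocal completeness for the pair $(S_i,p_{ij})$. A lesser nuisance is the bookkeeping near the unbounded line $L$ and near the second tangency point $e_L$, where $\mcr^*\mci^{-1}g_i$ is conormal only to the smooth line $L$; this is also why the conclusion is naturally a local, $H^s_{\mathrm{loc}}$-type statement.
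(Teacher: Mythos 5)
Your proof is correct and follows essentially the same strategy as the paper's: decompose $g=g_i+g_j$ via the microlocally complete splitting of $I_kL^2(\mathcal{L};\mcv(S_i,S_j))$, transport the testing operators across $\mcr^*\mci^{-1}$ using Corollary~\ref{backprojcomm}, observe that the transported operators land in $\mcm(N^*S_i\cup N^*p_{ij})$, and telescope together with the mapping property $\mcr^*\mci^{-1}:L^2_{\mathrm{comp}}\to H^{-\ha}_{\mathrm{loc}}$. The only (minor, and arguably cleaner) difference is that you test $\mcr^*\mci^{-1}g_i$ directly against $V_m\in\mcv(\gamma,L)$ rather than against the larger class $\mcm(N^*\gamma_i\cup N^*L)$ used in the paper, thereby bypassing the additional decomposition of $I_kH^{-\ha}(\mbr^2;\mcv(\gamma_i,L))$ that the paper invokes; you also explicitly flag the point, implicit in the paper, that the cutoff $\chi(\sigma,\eta)$ in the construction of $\tilde A$ makes it microlocally trivial near $\{\sigma=0\}$, so that $\sigma_1(\tilde V_m)$ genuinely vanishes on all of $T^*_{p_{ij}}\mathcal{L}$ and not merely on the image $C(N^*L)$.
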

\bpf
By the decomposition above, it suffices to show
\[u\in I_kL^2(\mathcal{L};\mcv(S_i,p_{ij}))\implies \mcr^*\mci^{-1}u\in I_kH^{-\ha}(\mbr^2,\mcv(\gamma_i,L))\]
since by \cite{MR1} the latter space equals $I_kH^{-\ha}(\mbr^2;\mcv(\gamma, e_L)) + I_kH^{-\ha}(\mbr^2;\mcv(L,e_L))$. 
By microlocal completeness \cite{MR1}, 
we have that 
\[I_kL^2(\mathcal{L};\mcv(S_i,p_{ij})) = I_kL^2(\mathcal{L};N^*(S_i,p_{ij})),\]
where the latter submanifold of $T^*\mathcal{L}$ is defined as
\[N^*(S_i,p_{ij}) := \{(s,\phi,\sigma,\eta)\in T^*\mathcal{L}\,:\,\sigma_1(V)(s,\phi,\sigma,\eta) = 0\text{ for all }V\in\mcv(S_i,p_{ij})\}\]
so in this case
\[N^*(S_i,p_{ij}) = N^*S_i\cup N^*p_{ij}.\]
Note that
\[C^{-1}(N^*S_i\cup N^*p_{ij}) = N^*\gamma_i\cup N^*L,\]
and again by microlocal completeness we have
\[I_kH^{-\ha}(\mathcal{L};\mcv(\gamma_i,L)) = I_kH^{-\ha}(\mathcal{L};N^*(\gamma_i,L)) = I_kH^{-\ha}(\mathcal{L};N^*\gamma_i\cup N^*L).\]
By Corollary \ref{backprojcomm}, for any $A\in\mcm(N^*\gamma_i\cup N^*L)$ we can find $\tilde{A}\in\Psi^1(M)$ such that $A\mcr^*\mci^{-1} = \mcr^*\mci^{-1}\tilde{A}$ up to smoothing error, with $\sigma_1(\tilde{A}) = \sigma_1(A)\circ C^{-1}$, so in particular $\sigma_1(\tilde{A})|_{N^*S_i\cup N^*p_{ij}} = \sigma_1(A)\circ C^{-1}|_{C(N^*\gamma_i\cup N^*L)}\equiv 0$, i.e. $\tilde{A}\in\mcm(N^*S_i\cup N^*p_{ij})$. Thus, if we wish to test $\mcr^*\mci^{-1}u$ against $A_1A_2\dots A_k$ with all $A_{l}\in\mcm(N^*\gamma_i\cup N^*L)$, we note that
\[A_1A_2\dots A_k\mcr^*\mci^{-1}u = \mcr^*\mci^{-1}\tilde{A}_1\tilde{A}_2\dots\tilde{A}_ku + \text{smooth} \in H^{-\ha}\]
since $\tilde{A}_1\tilde{A}_2\dots\tilde{A}_ku\in L^2$ as all $\tilde{A}_{l}\in\mcm(N^*(S_i,p_{ij}))$, and $\mcr^*\mci^{-1}$ maps from $L^2$ to $H^{-\ha}$.
\epf

Next, consider the situation near a cusp $G$ with cusp point $B$. Recall that locally $\mca$ resembles the algebra $L^{\infty}J_k(\mathcal{L};G)$ near $B$. Suppose $p$ is an inflection point and $C\circ N^*\gamma_\pm = N^*S_\pm$ and $S_\pm$ form a cusp $G$. The Lagrangian submanifolds involved near the cusp are $\La_G = N^*S_+\cup N^*S_-$ and $\La_B = N^*B$. We find that 
\beq
C^{-1}\circ \La_{G} = N^*\gamma_+\cup N^*\gamma_-
\eeq
Next, assume locally near $p$ that $\gamma_\pm = (t, t^3 h(t)), \pm t> 0$. Then $B = (0, \pi/2)$ and we find that 
\beq
\begin{gathered}
C^{-1}\circ\La_B = N^*(\{x\cdot\theta_{\pi/2} = 0\}) = N^*(\{x_2 = 0\}).
 \end{gathered}
\eeq
Note that the line $x_2 = 0$ which is the tangent line of $\gamma$ at $p$. We denote this tangent line by $L$. 
For this case, we consider conormal distribution space 
\beq
\mcc_k^s(\mbr^2; p) = I_kH^s(\mbr^2; \mcv(L, \gamma)).
\eeq

\begin{lemma}
\label{cusp-backproj}
If $g\in L^\infty(\mathcal{L})$ is supported so that $\text{supp }g$ contains no intersection points $p_{ij}$ and only contains one cusp point $G$, and furthermore $g\in J_k(\mathcal{L};G)$, then $\mcr^*\mci^{-1}g \in 
\mcc_k^{-\ha}(\mbr^2; p)$. 
\end{lemma}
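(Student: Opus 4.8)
The plan is to follow the proof of Lemma \ref{transverse-backproj} almost verbatim, with the transversal‑intersection space replaced by the marked Lagrangian space $J_k(\mathcal{L};G)$, and the identity $C^{-1}\circ N^*(S_i,p_{ij})=N^*\gamma_i\cup N^*L$ replaced by the computations of $C^{-1}\La_G$ and $C^{-1}\La_B$ recorded just before the lemma. First I would write $g=g_G+g_B$ with $g_G\in I_kL^2_{\loc}(\mcm(\La_G,\Sigma))$ and $g_B\in I_kL^2_{\loc}(\mcm(\La_B,\Sigma))$, and, multiplying by a cutoff equal to $1$ near $\supp g$ (which preserves these spaces), take $g_G,g_B$ compactly supported. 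It then suffices to check that $V_1\cdots V_j\,\mcr^*\mci^{-1}g\in H^{-\ha}_{\loc}$ for all $V_1,\dots,V_j\in\mcv(L,\gamma)$ with $j\le k$.

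The geometry I need is the content of the paragraphs preceding the lemma together with assumption (A2): $C^{-1}\La_G$ is the closure of $N^*\gamma_+\cup N^*\gamma_-$, i.e.\ the conormal bundle $N^*\gamma$ of the full smooth curve, and $C^{-1}\La_B=N^*L$ where $L$ is the tangent line to $\gamma$ at the inflection point $p$; since $L$ meets $\gamma$ only at $p$ and to finite order, $N^*\gamma\cap N^*L=N^*_p\gamma=N^*_pL$, so that $C^{-1}\Sigma=C^{-1}(\La_G\cap\La_B)=N^*_p\gamma=N^*_pL$, a conic line contained in the region $\{\sigma\ne 0\}$ on which $C$ restricts to a symplectomorphism onto $T^*\mathcal{L}\setminus\{\sigma=0\}$. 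The second ingredient is the elementary but crucial observation that every $V\in\mcv(L,\gamma)$ vanishes at $p$: in the coordinates of Section \ref{sec-cusp} with $p=0$, $L=\{x_2=0\}$, $\gamma=\{x_2=x_1^3h(x_1)\}$, $h(0)\ne 0$, tangency to $L$ forces the $\p_{x_2}$‑coefficient of $V$ to be divisible by $x_2$, while along $\gamma$ the velocity component $\dot x_2$ vanishes to second order at the origin; matching these then forces the $\p_{x_1}$‑coefficient of $V$ to vanish at the origin as well (higher‑order inflections are identical). Hence the flow of $V$ fixes $p$ and preserves $\gamma$ and $L$, so its canonical lift, generated by $H_{\sigma_1(V)}$, preserves $N^*_p\gamma$ and $N^*_pL$; together with $\sigma_1(V)|_{N^*\gamma}=0$ and $\sigma_1(V)|_{N^*L}=0$ this shows $V\in\mcm(N^*\gamma,N^*_p\gamma)\cap\mcm(N^*L,N^*_pL)$.

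Next I would commute. By Corollary \ref{backprojcomm} there exist $\tilde V_i\in\Psi^1(\mathcal{L})$ with $V_i\,\mcr^*\mci^{-1}=\mcr^*\mci^{-1}\tilde V_i$ modulo smoothing and $\sigma_1(\tilde V_i)=\sigma_1(V_i)\circ C^{-1}$. Since $C$ is a symplectomorphism on the relevant region, $\sigma_1(\tilde V_i)$ vanishes on $C(N^*\gamma)=\La_G$ and on $C(N^*L)=\La_B$, and $H_{\sigma_1(\tilde V_i)}=C_*H_{\sigma_1(V_i)}$ is tangent to $C(N^*_p\gamma)=C(N^*_pL)=\Sigma$; thus $\tilde V_i\in\mcm(\La_G,\Sigma)\cap\mcm(\La_B,\Sigma)$. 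Therefore
\[V_1\cdots V_j\,\mcr^*\mci^{-1}g=\mcr^*\mci^{-1}\!\left(\tilde V_1\cdots\tilde V_j\,g_G\right)+\mcr^*\mci^{-1}\!\left(\tilde V_1\cdots\tilde V_j\,g_B\right)+(\text{smooth}),\]
and as $g_G\in I_kL^2_{\loc}(\mcm(\La_G,\Sigma))$, $g_B\in I_kL^2_{\loc}(\mcm(\La_B,\Sigma))$ and $j\le k$, both $\tilde V_1\cdots\tilde V_j\,g_G$ and $\tilde V_1\cdots\tilde V_j\,g_B$ lie in $L^2_{\loc}$. Applying the mapping property $\mcr^*\mci^{-1}\colon L^2\to H^{-\ha}$ then gives $V_1\cdots V_j\,\mcr^*\mci^{-1}g\in H^{-\ha}_{\loc}$, completing the argument.

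The hardest part is the middle paragraph — checking that the canonical relation transports the marked Lagrangian operator classes correctly. This has two pieces: (a) that $C$ carries $\mcm(N^*\gamma,N^*_p\gamma)$ and $\mcm(N^*L,N^*_pL)$ into $\mcm(\La_G,\Sigma)$ and $\mcm(\La_B,\Sigma)$, which is a symplectic‑invariance statement but hinges on the geometric identity $C^{-1}\Sigma=N^*_p\gamma=N^*_pL$; and (b) that $\mcv(L,\gamma)$ lies inside both marked classes, which rests on the computation that tangency to both $L$ and $\gamma$ at the high‑order contact point $p$ forces a vector field to vanish there. A secondary bookkeeping point, exactly as in Lemmas \ref{iter-commute} and \ref{transverse-backproj}, is the localization needed to invoke Corollary \ref{backprojcomm} with compactly microsupported operators; one handles it by cutting off near the compact set — a neighborhood of $\gamma$ together with a bounded arc of $L$ — where the nontrivial singular structure of $\mcr^*\mci^{-1}g$ lives, the remainder of $L$ being controlled by plain conormality to $L$.
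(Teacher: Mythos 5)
Your proof follows the same overall route as the paper's: commute pseudodifferential operators across the filtered backprojection via Corollary~\ref{backprojcomm}, and show the commuted operators land in the two marked classes $\mcm(\La_G,\Sigma)$, $\mcm(\La_B,\Sigma)$ so that $J_k$ is preserved. The one substantive difference is the verification that $H_{\sigma_1(\tilde V)}$ is tangent to the marking $\Sigma$. The paper argues that since $\sigma_1(\tilde A)$ vanishes on both $\La_B$ and $\La_G$, the Hamilton field is tangent to both, ``thus tangent to $\La_B\cap\La_G=\Sigma$''. But because $\La_B$ and $\La_G$ do not meet cleanly (as the paper itself notes just before defining $\Sigma$), at a point of $\Sigma$ one has $T\La_B\cap T\La_G = \text{span}(\p_\xi,\p_\eta)\supsetneq \text{span}(\p_\xi) = T\Sigma$, so the stated implication is not literally valid for an arbitrary vector. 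The conclusion does hold — the constraint that $\sigma_1(\tilde A)$ vanishes on \emph{both} Lagrangians kills the offending $\p_\eta$ component of $H_{\sigma_1(\tilde A)}$ at $\Sigma$ — but this requires a short computation the paper leaves implicit. Your argument sidesteps this entirely by the elementary but crucial observation that any $V\in\mcv(L,\gamma)$ must vanish at the tangency point $p$ (the paper never makes this point), whence the canonical lift of $V$ preserves the fiber $T_p^*\mbr^2$ and therefore $N^*_p\gamma=N^*_pL=C^{-1}\Sigma$; pushing through $C$ gives tangency of $H_{\sigma_1(\tilde V)}$ to $\Sigma$ directly. This is a more transparent and self-contained justification of the marking condition, and in fact shows the stronger containment $\tilde V\in\mcm(\La_G,\Sigma)\cap\mcm(\La_B,\Sigma)$ cleanly. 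Both proofs are correct; yours fills in a detail that the paper elides.
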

\bpf
The proof is similar to that of Lemma \ref{transverse-backproj}. 
Consider $A \in \mcv(L, \gamma)$.  In particular, $\sigma_1(A)$ vanishes on $N^*L\cup N^*\gamma$.  
Applying Corollary \ref{backprojcomm}, we get $A \mcr^*\mci^{-1} = \mcr^* \mci^{-1} \tilde A$ modulo a smoothing operator,  where $\tilde A$ is a pseudo-differential operator of order $1$ on $\mathcal{L}$ with $\sigma(\tilde{A}) = \sigma(A)\circ C^{-1}$. In particular, $\sigma_1(\tilde A)$ vanishes on $C \circ N^*L = \La_B$ and $C\circ N^*\gamma = \La_G$. Since $\La_B$ and $\La_G$ are Lagrangian, it follows that the Hamilton vector field $H_{\sigma_1(\tilde A)}$ is tangent to both $\La_B$ and $\La_G$, thus tangent to $\La_B\cap \La_G = \Sigma$ which is the marking. Therefore, $\tilde A$ belongs to $\mcm(\La_B, \Sigma) + \mcm(\La_G, \Sigma)$. The rest of the proof is the same as in Lemma \ref{transverse-backproj}.
\epf

Finally, to conclude the general case from the local cases considered above,  we consider $\mcb_k^s = \sum_{L\in \mcl} \mcb_k^s(\mbr^2; L)$ where the summation is over all lines $L\in \mcl$ which is tangent at two non-inflection points, as well as $\mcc^s_{k} = \sum \mcc^s_k(\mbr^2; p)$ where the summation is over all inflection points on $\gamma.$ We also let $\mcd^s_k(\mcl) = \mcb^s_k + \mcc^s_k.$ We now show:
\begin{lemma}
\label{overall-backproj}
If $g\in\mca$, then $\mcr^*\mci^{-1}g\in\mcd^s_k(\mcl)$.
\end{lemma}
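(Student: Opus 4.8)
The strategy is to decompose $g\in\mca$ using a partition of unity on $\mathcal{L}$ adapted to the finitely many special points (intersection points $p_{ij}$ and cusp points), apply the two local lemmas just proven, and reassemble. Concretely, the first step is to choose a finite open cover of a compact neighborhood of $\text{supp }g$ by open sets $\{U_m\}$ with a subordinate partition of unity $1 = \sum_m \chi_m$, arranged so that each $U_m$ is diffeomorphic to $\mbr^2$ and falls into one of three types: (i) $U_m$ contains no cusp point and exactly one intersection point $p_{ij}$; (ii) $U_m$ contains no intersection point and exactly one cusp point corresponding to a cusp $G$; (iii) $U_m$ contains no special points at all. Such a cover exists because the set of intersection points and cusp points is finite (by assumptions (A1), (A2)) and, by the discussion at the end of Section \ref{sec-non}, no two special points coincide and cusp points are disjoint from intersection points, so they can be separated.

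The second step is the local analysis on each piece. For $U_m$ of type (i), the defining property of $\mca$ gives $\chi_m g\in I_kL^2(\mbr^2;\mcv(S_i,S_j))$, so Lemma \ref{transverse-backproj} yields $\mcr^*\mci^{-1}(\chi_m g)\in\mcb^{-\ha}_k(\mbr^2;L)\subset\mcd^s_k(\mcl)$ for the associated line $L$ (which lies in $\mcl$ since $L$ is tangent to $\gamma$ at two non-inflection points by the identification $C^{-1}\circ N^*p_{ij}=N^*L$). For $U_m$ of type (ii), the defining property of $\mca$ gives $\chi_m g\in J_k(\mathcal{L};G)$, so Lemma \ref{cusp-backproj} yields $\mcr^*\mci^{-1}(\chi_m g)\in\mcc^{-\ha}_k(\mbr^2;p)\subset\mcd^s_k(\mcl)$ for the inflection point $p$ whose tangent line $L$ lies in $\mcl$. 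For $U_m$ of type (iii), $\chi_m g$ is, microlocally near $\text{supp }\chi_m$, conormal to at most one of the smooth hypersurfaces $S_i$ (or to none, if $\text{supp }\chi_m$ misses every $S_i$), so $\chi_m g\in I_kL^2(\mathcal{L};\mcv(S_i))$ and an argument identical to (the easier half of) Lemma \ref{transverse-backproj}—commuting $\mcm(N^*\gamma_i)$-operators across $\mcr^*\mci^{-1}$ via Corollary \ref{backprojcomm}—gives $\mcr^*\mci^{-1}(\chi_m g)\in I_kH^{-\ha}(\mbr^2;\mcv(\gamma_i))$, which is contained in every $\mcb^{-\ha}_k(\mbr^2;L)$ and $\mcc^{-\ha}_k(\mbr^2;p)$ and hence in $\mcd^{-\ha}_k(\mcl)$; if $\text{supp }\chi_m g$ is microlocally trivial it contributes a smooth term.

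The third and final step is to sum: $\mcr^*\mci^{-1}g=\sum_m\mcr^*\mci^{-1}(\chi_m g)$, a finite sum of elements of $\mcd^{-\ha}_k(\mcl)$, hence in $\mcd^{-\ha}_k(\mcl)$. One should also note the mild compatibility issue that the pieces $\chi_m g$ were analyzed via local identifications of $U_m$ with $\mbr^2$, but the conclusions $\mcb^{-\ha}_k(\mbr^2;L)$ etc.\ are intrinsically defined in terms of the global objects $L$ and $\gamma$ in $\mbr^2$ (via $C^{-1}$), so the reassembly is consistent; this is the kind of bookkeeping that is routine but must be stated. \textbf{The main obstacle} is ensuring that the partition of unity genuinely realizes the three-type decomposition—i.e.\ verifying that the special points can be microlocally isolated and that near a cusp point one can arrange $\text{supp }\chi_m$ to meet no transversal intersection and vice versa. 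This rests entirely on assumption (A2) and on the geometric observations in Section \ref{sec-non} (no line is tangent at an inflection point and a convex piece simultaneously, no line is tangent at two inflection points), so once those are invoked the obstacle dissolves; the remaining work is the standard patching already packaged in Lemmas \ref{transverse-backproj} and \ref{cusp-backproj} together with Corollary \ref{backprojcomm}.
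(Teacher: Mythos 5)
Your proof is correct and takes essentially the same approach as the paper: decompose $g$ with a partition of unity adapted to the finitely many special points, apply Lemmas \ref{transverse-backproj} and \ref{cusp-backproj} (plus the trivial case) locally, and sum. Your explicit type-(iii) case for pieces containing no special point is a slightly more careful bookkeeping of what the paper states tersely, but it does not constitute a different argument.
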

\begin{proof}
A partition of unity partitions $g = \sum{g_L}$ where each $g_L$ satisfies the assumptions of either Lemma \ref{transverse-backproj} or \ref{cusp-backproj}. Thus, $\mcr^*\mci^{-1}g_L\in \mcd^s_k(\mcl)$ for each $L$, and hence $\mcr^*\mci^{-1}g = \sum{\mcr^*\mci^{-1}g_L}\in\mcd^s_k(\mcl)$.
\end{proof}
\begin{proof}[Proof of Theorem \ref{thm-main1}]
By Lemma \ref{art-alg}, we have $F(\mcr f)\in\mca$, so by Lemma \ref{overall-backproj}, we have \\ $\mcr^*\mci^{-1}(F(\mcr f))\in\mcd^s_k(\mcl)$, as desired.
\end{proof}
 

\section{Generation of singularities at cusp points}\label{sec-new}

The study of nonlinear interaction of singularities has a rich history especially for nonlinear wave equations , see Beals \cite{Bea} for a review. Generation of new singularities has been considered in various scenarios, for example interaction of a cusp and plane in Zworski \cite{Zw},  interaction of swallowtails in Joshi-S\'a Barreto \cite{JoSa}.  Here, we study the cusp interactions and prove Lemma \ref{new-sing}. 

The main technical tool is the following lemma from \cite{Zw}:
\begin{lemma}[\cite{Zw}, Lemma 4]
\label{zw-lemma}
Suppose that $v\in\mathcal{S}'(\mbr^2)$ satisfies
\[\mathcal{F}(v)(\xi) = \chi(\xi_1)f(\xi_2/\xi_1)e^{-iH(\xi)}|\xi|^{-\rho}\]
where $H(\xi_1,\xi_2) = \xi_2^3/\xi_1^2$, $\chi$ is even and is identically zero on $\{|\xi|<1\}$ and identically $1$ on $\{|\xi|>2\}$, $f\in C_c^{\infty}(\mbr)$, and $\rho>2$. Then, for sufficiently small $\epsilon$ we have
\[\mathcal{F}(v^2)(\epsilon\xi_2,\xi_2) = c_{\pm}(\rho)(f(0)^2|\xi_2|^{-3\rho+5/2}+O(|\xi_2|^{-3\rho+3/2})),\]
with $c_{\pm}(\rho)\ne 0$.
\end{lemma}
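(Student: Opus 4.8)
The plan is to write $\mathcal{F}(v^2)=(2\pi)^{-2}\,\mathcal{F}(v)\ast\mathcal{F}(v)$ and extract the asymptotics of this convolution evaluated along the ray $\xi\mapsto(\epsilon\xi_2,\xi_2)$. Set $\lambda=\xi_2$ and $\zeta=(\epsilon\lambda,\lambda)$. Away from $\{\chi=0\}$ the integrand $\mathcal{F}(v)(\xi)\,\mathcal{F}(v)(\zeta-\xi)$ carries amplitude $f(\xi_2/\xi_1)\,f\!\left(\tfrac{\lambda-\xi_2}{\epsilon\lambda-\xi_1}\right)|\xi|^{-\rho}|\zeta-\xi|^{-\rho}$ and phase $-\Theta(\xi)$, where $\Theta(\xi):=H(\xi)+H(\zeta-\xi)$. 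Since $f\in C_c^\infty$ is supported where $\xi_2/\xi_1$ and $(\lambda-\xi_2)/(\epsilon\lambda-\xi_1)$ are both small, the effective region of integration has $|\xi_1|$ large; as $\chi$ removes $|\xi|<1$ and the decay at infinity is $|\xi|^{-2\rho}$ with $2\rho>2$, the convolution converges absolutely, so $v\in C^0\cap L^\infty$ and $v^2$, $\mathcal{F}(v^2)$ are unambiguous.

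First I would perform one-dimensional stationary phase in $\xi_2$ with $\xi_1$ held fixed. The equation $\partial_{\xi_2}\Theta=0$ has two roots, of which $\xi_2=\xi_1/\epsilon$ lies outside $\operatorname{supp}f$ once $\epsilon$ is small (it would force $\xi_2/\xi_1=1/\epsilon$), so only $\xi_2^\ast:=\lambda\xi_1/(2\xi_1-\epsilon\lambda)$ contributes. Writing $p^\ast:=\xi_2^\ast/\xi_1=\lambda/(2\xi_1-\epsilon\lambda)$ and $q^\ast:=(\lambda-\xi_2^\ast)/(\epsilon\lambda-\xi_1)$, direct computation gives the clean identities $\partial_{\xi_2}^2\Theta(\xi_2^\ast)=6\lambda/(\xi_1(\xi_1-\epsilon\lambda))$, $\Theta(\xi_2^\ast)=\lambda^3/(2\xi_1-\epsilon\lambda)^2=\lambda(p^\ast)^2$, and — the crucial structural input — $q^\ast=-p^\ast$. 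Stationary phase then replaces the $\xi_2$-integral, up to relative error $O(\lambda^{-1})$, by $c\,\big(\xi_1(\xi_1-\epsilon\lambda)\big)^{1/2}\lambda^{-1/2}e^{\mp i\pi/4}e^{-i\lambda(p^\ast)^2}f(p^\ast)f(-p^\ast)\big(|\xi^\ast||\zeta-\xi^\ast|\big)^{-\rho}$ with $\xi^\ast=(\xi_1,\xi_2^\ast)$; and since $|\xi^\ast|=|\xi_1|(1+(p^\ast)^2)^{1/2}$ and, by $q^\ast=-p^\ast$, $|\zeta-\xi^\ast|=|\xi_1-\epsilon\lambda|(1+(p^\ast)^2)^{1/2}$, while $\xi_1(\xi_1-\epsilon\lambda)=\tfrac{\lambda^2}{4(p^\ast)^2}(1-\epsilon^2(p^\ast)^2)$, the whole non-oscillatory factor equals $c\,f(0)^2\,\lambda^{1/2-2\rho}|p^\ast|^{2\rho-1}\big(1+O((p^\ast)^2)\big)$ — an even function of $p^\ast$, the cancellation of the would-be $O(p^\ast)$ term in $f(p^\ast)f(-p^\ast)$ being precisely the content of $q^\ast=-p^\ast$.

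Next I would reparametrize the remaining $\xi_1$-integral. As $\xi_1$ ranges over the two unbounded intervals on which the amplitude is supported, $p^\ast$ sweeps a symmetric punctured interval about $0$; changing variables from $\xi_1$ to $p^\ast$ and then to $u=\lambda(p^\ast)^2$ (legitimate because the integrand is now even in $p^\ast$, so a genuine smooth function of $(p^\ast)^2=u/\lambda$) turns the integral into $\mathcal{F}(v^2)(\epsilon\lambda,\lambda)=c_\pm\,\lambda^{5/2-3\rho}\int_0^\infty u^{\rho-2}\Phi(u/\lambda)e^{-iu}\,du$, where $\Phi$ is smooth, compactly supported, and $\Phi(0)=f(0)^2$. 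Finally, for any $a>-1$ and $\Phi\in C_c^\infty$ one has $\int_0^\infty u^a\Phi(u/\lambda)e^{-iu}\,du\to\Phi(0)\,\Gamma(a+1)e^{-i\pi(a+1)/2}$ (the regularized value of $\int_0^\infty u^ae^{-iu}du$) as $\lambda\to\infty$, with error $O(\lambda^{-1})$: split $\Phi(u/\lambda)=\mu(u)+(\Phi(u/\lambda)-\mu(u))$ with $\mu\in C_c^\infty$ equal to $\Phi(0)$ near $0$, treat the second piece by repeated integration by parts against the nonvanishing phase $-u$ (all boundary terms vanishing, and each derivative on $\Phi(u/\lambda)$ producing a factor $\lambda^{-1}$), and Taylor-expand $\Phi$ at $0$. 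Applying this with $a=\rho-2$ gives $\mathcal{F}(v^2)(\epsilon\lambda,\lambda)=c_\pm\,\Gamma(\rho-1)e^{-i\pi(\rho-1)/2}\,f(0)^2\,\lambda^{5/2-3\rho}+O(\lambda^{3/2-3\rho})$; absorbing the constant into $c_\pm(\rho)$ — the $\pm$ recording $\operatorname{sgn}\xi_2$, which enters through the sign of $p^\ast$ and the stationary-phase factor $e^{\mp i\pi/4}$ — yields the claim, and $c_\pm(\rho)\ne0$ because $\Gamma(\rho-1)\ne0$ while the remaining factors have unit modulus.

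The main obstacle is the care required in the oscillatory estimates: one must verify that the second $\xi_2$-critical point and the boundary of $\operatorname{supp}f$ contribute nothing (here $f\in C_c^\infty$, so repeated integration by parts kills all boundary terms), that the ranges of $\xi_1$ away from the relevant scale $|\xi_1|\sim\lambda^{3/2}$ are of lower order (there the phase $\lambda(p^\ast)^2$ is either large, so one integrates by parts, or the amplitude $\sim|\xi_1|^{1-2\rho}$ has already decayed), and — most delicately — that the error is genuinely $O(\lambda^{3/2-3\rho})$, i.e. of relative order $\lambda^{-1}$ and not the naive $\lambda^{-1/2}$. This last point is exactly where $q^\ast=-p^\ast$ is used: it forces the reduced amplitude to be even in $p^\ast\sim\lambda^{-1/2}$, so that it is a smooth function of $u=\lambda(p^\ast)^2$ and no half-integer-power correction arises.
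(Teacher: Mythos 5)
Your proposal is correct and follows essentially the same route as the proof sketched in the paper (and carried out in Zworski's Lemma 4): write $\mathcal{F}(v^2)$ as a convolution, apply one-dimensional stationary phase in $\xi_2$, and reduce to the model integral $c|\xi_2|^{-2\rho+3/2}\int|\tau|^{2\rho-3}F_0^{\epsilon}(\tau^2)e^{-i\xi_2\tau^2}\,d\tau$ — your variable $p^{\ast}$ plays the role of $\tau$, and your identity $q^{\ast}=-p^{\ast}$ is exactly the mechanism that makes the reduced amplitude a smooth function of $\tau^2$, which is the key point in both arguments. The computations of the critical point, Hessian, and phase value check out, and your treatment of the final Fresnel-type integral correctly produces the $|\xi_2|^{-3\rho+5/2}$ leading term with nonvanishing coefficient.
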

Here we take the following convention for the Fourier transform:
\[\mathcal{F}(u)(\xi) = \int_{\mbr^n}{e^{-ix\cdot\xi}u(x)\,dx}.\]
The proof is by writing the above Fourier transform as a two-dimensional convolution integral and, after appropriate rescaling, applying stationary phase in one of the variables to get an integral of the form $c|\xi_2|^{-2\rho+3/2}\int_{\mathbb{R}}{|\tau|^{2\rho-3}F_0^{\epsilon}(\tau^2)e^{-i\xi_2\tau^2}\,d\tau}$ where $F_0^{\epsilon}$ is smooth with $F_0^{\epsilon}(0) = f(0)^2$, from which the above asymptotics follow. (Note that applying stationary phase to the entire integral would have resulted in no stationary points of the phase, but also would not have provided a lower bound for the derivative needed to apply stationary phase on non-compact regions.) We emphasize that the primary calculation in this section is provided in \cite{Zw}, and that the remainder of this section is using this result to prove Lemma \ref{new-sing}.

Note that for $v$ in the hypothesis of Lemma \ref{zw-lemma} we have
\begin{equation}
\label{cusp-conorm}
v(x) = (2\pi)^{-2}\int_{\mbr^2}{e^{i(x\cdot\xi-H(\xi))}b(\xi)\,d\xi},\quad b(\xi) = \chi(\xi_1)f(\xi_2/\xi_1)|\xi|^{-\rho}.
\end{equation}
It is easy to check that the phase function $(x,\xi)\mapsto x\cdot\xi - H(\xi)$ (away from $\xi_1 = 0$) parametrizes the Lagrangian
\[\Lambda = \{(x,\xi)\in T^*\mbr^2\backslash 0\,:\,3x_1\xi_1+2x_2\xi_2 = 0, 3\xi_2^2-x_2\xi_1^2 = 0\} = \overline{N^*\left(\left\{\left(\frac{x_1}{2}\right)^2=\left(\frac{x_2}{3}\right)^3\right\}\backslash 0\right)}\]
so that distributions of the form \eqref{cusp-conorm} are distributions conormal to the cusp $\{(x_1/2)^2 = (x_2/3)^3\}$. We will take $G$ to be the above cusp, with $B$ the cusp point $x=0$, and $\La_G$ to be the Lagrangian submanifold above.

Note that since $\rho>2$, the integrand in \eqref{cusp-conorm} is integrable, and hence $v\in L^\infty(\mbr^2)$. Thus overall we have $v\in L^\infty I_kL^2(\mbr^2;\mcm(\La_G))$ for all $k$. It thus suffices to show that the estimate of $\mathcal{F}(v^2)$ from Lemma \ref{zw-lemma} allows us to conclude that $v^2\not\in L^\infty I_kL^2(\mbr^2;\mcm(\La_G))$ for some $k$.

Note that $\La_G\cap\{x=0\} = \{x=0,\xi_2=0\}$, and if $u\in L^\infty I_kL^2(\mbr^2;\mcm(\La_G))$ for all $k$, then $u$ is a classical Lagrangian distribution associated to $\La_G$ and thus has wavefront set contained in $\La_G$. It thus suffices to show the following:
\begin{lemma}
\label{v2wf}
Let $v$ be in \eqref{cusp-conorm}. For sufficiently small $\epsilon>0$, we have $(0,(\epsilon,1))\in WF(v^2)$.
\end{lemma}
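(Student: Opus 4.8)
The plan is to read off the wavefront set statement directly from the asymptotic estimate of $\mathcal{F}(v^2)$ provided by Lemma \ref{zw-lemma}, using the standard characterization of the wavefront set in terms of decay (or lack thereof) of the Fourier transform after multiplication by a cutoff. First I would recall that $(x_0,\xi_0)\notin WF(w)$ iff there is $\varphi\in C_c^\infty$ with $\varphi(x_0)\ne 0$ such that $\mathcal{F}(\varphi w)(\xi)$ decays rapidly in an open cone around $\xi_0$. Since here $x_0 = 0$ and $v^2$ is already compactly microlocalized near $\La_G$ in a neighborhood of $B$ (indeed $v\in L^\infty$ with $\mathcal{F}(v)$ supported where $\xi_2/\xi_1$ lies in a compact set), the multiplication by a cutoff $\varphi$ supported near $0$ only changes $\mathcal{F}(v^2)$ by a rapidly decaying term in the relevant cone; so it suffices to work with $\mathcal{F}(v^2)$ itself and show it fails to decay rapidly along the ray $\{(\epsilon\xi_2,\xi_2):\xi_2>0\}$ (equivalently, in any open cone around the direction $(\epsilon,1)$).

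The key step is then purely bookkeeping with the exponent. Lemma \ref{zw-lemma} gives $\mathcal{F}(v^2)(\epsilon\xi_2,\xi_2) = c_\pm(\rho)\big(f(0)^2|\xi_2|^{-3\rho+5/2}+O(|\xi_2|^{-3\rho+3/2})\big)$ with $c_\pm(\rho)\ne 0$, provided $\rho>2$. Choosing $f$ with $f(0)\ne 0$, the leading term is a nonzero multiple of $|\xi_2|^{-3\rho+5/2}$, which is \emph{not} rapidly decaying (it is a fixed negative power of $|\xi_2|$, while the error is an even smaller power, hence does not cancel it for $|\xi_2|$ large). Therefore $\mathcal{F}(v^2)$ does not decay rapidly along that ray, so $(0,(\epsilon,1))\in WF(v^2)$. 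I would phrase this carefully: rapid decay in an open cone around $(\epsilon,1)$ would force, in particular, rapid decay along the ray $\xi = \xi_2(\epsilon,1)$ as $\xi_2\to\infty$, contradicting the polynomial lower bound $|\mathcal{F}(v^2)(\epsilon\xi_2,\xi_2)|\gtrsim |\xi_2|^{-3\rho+5/2}$ valid for $\xi_2$ large.

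The one point requiring a little care — and the main (minor) obstacle — is justifying that inserting the spatial cutoff $\varphi$ does not destroy the lower bound: that is, that $\mathcal{F}(\varphi v^2)(\epsilon\xi_2,\xi_2)$ still has the same leading asymptotics. This follows because $v^2\in L^\infty I_kL^2(\mbr^2;\mcm(\La_G))$ for $k$ large enough away from $B$ away from the issue — more precisely, because outside any neighborhood of $B$ the distribution $v^2$ is a classical conormal distribution associated to $\La_G$ (it is a square of such a distribution, smooth away from $G$ and conormal near the smooth part of $G$), so $(1-\varphi)v^2$ has wavefront set contained in $\La_G$ away from the fiber over $B$, and in particular its Fourier transform decays rapidly in the cone around $(\epsilon,1)$ (which, via the parametrization of $\La_G$, does not correspond to any point of $\La_G$ lying over $\text{supp}(1-\varphi)$ near $B$). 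Hence $\mathcal{F}(\varphi v^2)$ and $\mathcal{F}(v^2)$ agree up to rapid decay along the ray in question, and the polynomial lower bound transfers. With this, the conclusion $(0,(\epsilon,1))\in WF(v^2)$, and hence (by the remarks preceding the lemma) $v^2\notin L^\infty I_kL^2(\mbr^2;\mcm(\La_G))$ for $k$ large, follows.
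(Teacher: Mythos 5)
Your overall strategy is the same as the paper's: use Lemma \ref{zw-lemma} to see that $\mathcal{F}(v^2)$ fails to decay along the ray in direction $(\epsilon,1)$, and then convert this into a wavefront set statement. However, there is a genuine gap in the step you flag as ``the main (minor) obstacle,'' and it is not minor.

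You claim that $(1-\varphi)v^2$ has Fourier transform decaying rapidly in a cone around $(\epsilon,1)$ because its wavefront set is contained in $\La_G$ away from the fiber over $B$, and ``$(\epsilon,1)$ does not correspond to any point of $\La_G$ lying over $\text{supp}(1-\varphi)$ near $B$.'' But $\text{supp}(1-\varphi)$ is exactly the region \emph{away} from $B$, and there $\La_G$ \emph{does} contain the direction $(\epsilon,1)$: the Lagrangian is $\La_G = \{x = H'(\xi)\}$, so the point $(H'(\epsilon,1),(\epsilon,1)) = \big((-2\epsilon^{-3},3\epsilon^{-2}),(\epsilon,1)\big)$ lies in $\La_G$, and for $\epsilon$ small this base point is far from $0$ and hence in $\text{supp}(1-\varphi)$. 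So your parenthetical justification concerns the wrong region (``near $B$'' is irrelevant once you have cut off near $B$), and the inclusion $WF((1-\varphi)v^2)\subset\La_G$ is, by itself, entirely compatible with non-decay of $\mathcal{F}((1-\varphi)v^2)$ along $(\epsilon,1)$. A second, related issue: even with full control of $WF((1-\varphi)v^2)$, you cannot infer decay of its Fourier transform in a cone without knowing that $(1-\varphi)v^2$ decays at spatial infinity; you do not address this.

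The missing ingredient is precisely the paper's Lemma \ref{schwartz-infinity}: $v$ (and hence $v^2$) is Schwartz outside a compact set $B_M$, by a non-stationary phase argument exploiting the boundedness of $H'$ on the support of the amplitude $b$. Once you have this, choosing $\varphi\equiv 1$ on $B_M$ makes $(1-\varphi)v^2$ genuinely Schwartz, so $\mathcal{F}(\varphi v^2)$ and $\mathcal{F}(v^2)$ have the same leading asymptotics along the ray; and it simultaneously rules out the far-away candidate singularity at $x = H'(\epsilon,1)$, which is outside $B_M$ once $\epsilon$ is small. This is exactly how the paper pins the singularity down to $x=0$: apply Lemma \ref{fourier-wf} (equivalent to your cutoff argument, but with the Schwartz-at-infinity hypothesis made explicit) to get $(x,(\epsilon,1))\in WF(v^2)$ for \emph{some} $x$, then observe that away from $B$ the only candidate from $\La_G$ is $H'(\epsilon,1)$, which is excluded by Schwartz decay, leaving $x=0$. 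Your proposal essentially needs to add Lemma \ref{schwartz-infinity} and use it in place of the claim about $\La_G$ ``near $B$.''
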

(By $(0,(\epsilon,1))$ we mean the point $x=0,\xi=(\epsilon,1)$ in $T^*\mbr^2$.) Indeed, this would imply that $WF(v^2)$ would not be contained in $\La_G$, and hence $v^2$ would not be in $I_kL^2(\mbr^2;\mcm(\La_G))$ for some $k$. To do so, we first show:

\begin{lemma}
\label{schwartz-infinity}
Let $v$ be in \eqref{cusp-conorm}. Then $v$ is Schwartz outside a compact region, i.e. there is a compact set outside of which $v$ agrees with a Schwartz function.
\end{lemma}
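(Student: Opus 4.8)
The plan is to show that $v$, given by the oscillatory integral \eqref{cusp-conorm}, decays rapidly (with all derivatives) outside a bounded neighborhood of the cusp $G$, which is the standard statement that a conormal distribution is smooth — indeed Schwartz here, since the amplitude is integrable — away from the submanifold to which it is conormal. The key point is that the phase $\Phi(x,\xi) = x\cdot\xi - H(\xi)$ with $H(\xi) = \xi_2^3/\xi_1^2$ has no stationary points in $\xi$ when $x$ is bounded away from the set $\{(x_1/2)^2 = (x_2/3)^3\}$ (for $\xi_1 \neq 0$, which is all that the cutoff $\chi(\xi_1)$ retains), so one can integrate by parts in $\xi$.

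First I would compute $\partial_\xi \Phi = x - \partial_\xi H(\xi)$, where $\partial_{\xi_1} H = -2\xi_2^3/\xi_1^3$ and $\partial_{\xi_2} H = 3\xi_2^2/\xi_1^2$. Since $H$ is homogeneous of degree $1$, the map $\xi \mapsto \partial_\xi H(\xi)$ is homogeneous of degree $0$, hence its image is a fixed curve $\Gamma = \{\partial_\xi H(\xi) : \xi_1 \neq 0\}$ in the $x$-plane; a direct parametrization (setting $r = \xi_2/\xi_1$) gives $\Gamma = \{(-2r^3, 3r^2) : r \in \mbr\}$, which is precisely (up to the affine scaling built into $G$) the cusp curve $G$. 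Thus for $x$ in a compact set $K$ with $\operatorname{dist}(x, G) \ge \delta > 0$, we have $|x - \partial_\xi H(\xi)| \ge \delta$ uniformly in $\xi$ with $\xi_1 \neq 0$. Second, I would introduce the first-order operator $\mcl = \frac{(x - \partial_\xi H)\cdot \nabla_\xi}{i|x-\partial_\xi H|^2}$, which satisfies $\mcl e^{i\Phi} = e^{i\Phi}$, and integrate by parts $N$ times in \eqref{cusp-conorm}. Each application of the transpose $\mcl^t$ either differentiates the amplitude $b(\xi) = \chi(\xi_1)f(\xi_2/\xi_1)|\xi|^{-\rho}$ — which, being a symbol of order $-\rho$ with $\chi$ supported in $|\xi|\gtrsim 1$, gains a power of $|\xi|^{-1}$ per $\xi$-derivative — or differentiates the coefficient $\frac{x-\partial_\xi H}{|x-\partial_\xi H|^2}$; the latter, by the degree-$0$ homogeneity of $\partial_\xi H$, produces at worst bounded factors (uniformly for $x\in K$) times negative powers of $|\xi|$ coming from $\xi$-derivatives of $\partial_\xi H$, which is degree $-1$ in $\xi$. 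The upshot is that after $N$ integrations by parts the integrand is bounded by $C_N (1+|\xi|)^{-\rho-N}$, uniformly for $x$ in $K$ with $\operatorname{dist}(x,G)\ge\delta$; choosing $N$ large makes the integral converge absolutely and shows $v$ is bounded there, and differentiating \eqref{cusp-conorm} in $x$ brings down factors of $\xi$ which are again absorbed by taking $N$ even larger, giving rapid decay of all derivatives. Finally, to upgrade ``rapid decay outside any neighborhood of $G$'' to ``Schwartz outside a compact set,'' I would note that $G$ is itself non-compact but lives in a narrow region; pick a cutoff $\psi\in C^\infty(\mbr^2)$ equal to $1$ on a bounded set containing the relevant part of $G$ (say where the interaction takes place) and supported near $G$, and observe that $(1-\psi)v$ is, by the above, Schwartz on all of $\mbr^2$, while $\psi v$ is compactly supported; hence $v$ agrees with the Schwartz function $(1-\psi)v$ outside the compact set $\operatorname{supp}\psi$.

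The main obstacle — really the only nontrivial bookkeeping — is verifying the uniform-in-$x$ lower bound $|x - \partial_\xi H(\xi)| \ge \delta$ together with uniform control of all $\xi$-derivatives of $\frac{x-\partial_\xi H(\xi)}{|x-\partial_\xi H(\xi)|^2}$ on the relevant region, and checking that the cutoff $\chi(\xi_1)$ genuinely removes the locus $\xi_1 = 0$ where $H$ and its derivatives blow up (so that the amplitude and phase are smooth symbols on the support of $b$). Once these are in hand, the integration-by-parts argument is entirely routine. One should also be slightly careful that $f$ being compactly supported in $r = \xi_2/\xi_1$ confines $\xi$ to a conic sector away from $\xi_1 = 0$, which is exactly what makes $\partial_\xi H$ and all its derivatives symbolic of the expected orders on $\operatorname{supp} b$; I would state this at the outset.
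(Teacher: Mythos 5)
Your proposal is correct and is essentially the paper's proof: both rest on the observation that $H'(\xi)$ is bounded by some $M$ on $\operatorname{supp} b$ (because $f$ confines $\xi_2/\xi_1$ to a compact set), followed by repeated integration by parts with the standard non-stationary-phase operator built from $(x-H'(\xi))/|x-H'(\xi)|^2$. The only difference is cosmetic: the paper works directly on $\{|x|>2M\}$, where $|x-H'(\xi)|\ge |x|/2$ yields the $|x|^{-N}$ decay immediately, whereas you route through distance to the cusp and a cutoff $\psi$ --- just be sure your final estimates record the factor $|x-H'(\xi)|^{-1}=O(|x|^{-1})$ gained per integration by parts (your stated bounds are only ``uniform for $x\in K$'' with $K$ compact), since that decay in $x$, not mere smoothness away from the cusp, is what makes $(1-\psi)v$ Schwartz at infinity.
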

\begin{proof}
The main point to note is that $H'(\xi) = \left(-2\frac{\xi_2^3}{\xi_1^3},3\frac{\xi_2^2}{\xi_1^2}\right)$ is bounded on the support of $b$, since $b$ is supported in a region of the form $\left\{\left|\frac{\xi_2}{\xi_1}\right|<C\right\}$. Thus, suppose $|H'(\xi)|\le M$ on the support of $b$. The proof then proceeds by the usual integration by parts argument: letting $L$ be the differential operator
\[L = -i\frac{(x-H'(\xi))}{|x-H'(\xi)|^2}\cdot\partial_{\xi}\quad\text{so that}\quad Le^{i(x\cdot\xi-H(\xi))} = e^{i(x\cdot\xi-H(\xi))},\]
then for $|x|>2M$ (so that $|x-H'(\xi)|>\frac{1}{2}|x|$ on the support of $b$) we have that the coefficients of $L$ are bounded from above by $\frac{1}{|x-H'(\xi)|}\le \frac{2}{|x|}$, and moreover one can check that the coefficients' derivatives satisfy estimates of the form
\[\left|\partial_{\xi}^{\alpha}\left(\frac{x-H'(\xi)}{|x-H'(\xi)|^2}\right)\right|\le\frac{C_{\alpha}}{|x||\xi|^{|\alpha|}}\]
uniformly on $\{|x|>2M,\xi\in\text{supp }b\}$. Note that if $L^tg = i\partial_{\xi}\left(\frac{x-H'(\xi)}{|x-H'(\xi)|^2}\cdot\partial_{\xi}g\right)$ is the operator obtained by integration by parts against $L$, then we can write 
\begin{align*}\partial_x^{\alpha}v(x) = (2\pi)^{-2}\int_{\mbr^2}{e^{i(x\cdot\xi-H(\xi))}(i\xi)^{\alpha}b(\xi)\,d\xi} &= (2\pi)^{-2}\int_{\mbr^2}{L(e^{i(x\cdot\xi-H(\xi))})(i\xi)^{\alpha}b(\xi)\,d\xi} \\
&= (2\pi)^{-2}\int_{\mbr^2}{e^{i(x\cdot\xi-H(\xi))}(L^t)((i\xi)^{\alpha}b)(\xi)\,d\xi} \\
&=\dots = (2\pi)^{-2}\int_{\mbr^2}{e^{i(x\cdot\xi-H(\xi))}(L^t)^k((i\xi)^{\alpha}b)(\xi)\,d\xi}
\end{align*}
for any $k$ by repeatedly integrating by parts against the operator $L$. One can check that $(L^t)^k((i\xi)^{\alpha}b(\xi))$ is also supported in the support of $b$ (in particular in $|\xi_1|>1$) and satisfies estimates of the form
\[\left|(L^t)^k((i\xi)^{\alpha}b)(\xi)\right|\le C_k|x|^{-k}|\xi|^{-\rho+|\alpha|-k}\]
uniformly on $\{|x|>2M,\xi\in\text{supp }b\}$. For $k$ sufficiently large the integrand $e^{i(x\cdot\xi-H(\xi))}(L^t)^k((i\xi)^{\alpha}b)(\xi)$ becomes integrable, giving the estimate $|\partial_x^{\alpha}v(x)|\le C_k|x|^{-k}$ for all $k$ for $|x|>2M$. This shows that $v$ is Schwartz outside the ball $\{|x|\le 2M\}$.
\end{proof}

We next note that the estimate in Lemma \ref{zw-lemma} suggests that $(x,(\epsilon,1))$ is in $WF(v^2)$ for some $x$, given that the overall Fourier transform does not decay rapidly in the direction of $(\epsilon,1)$. This is true due to the contrapositive of the following plausible fact:

\begin{lemma}
\label{fourier-wf}
Suppose $u$ is a distribution on $\mathbb{R}^n$ which is Schwartz outside a compact region, and $\xi_0\in\mathbb{R}^n\backslash 0$ satisfies that $(x,\xi_0)\not\in\text{WF}(u)$ for all $x\in\mathbb{R}^n$. Then there is a conical neighborhood $\Gamma$ of $\xi_0$ in $\mathbb{R}^n$ such that the Fourier transform $\mathcal{F}(u)$ decays rapidly in $\Gamma$, i.e. that
\[\text{for all }N\text{ there exists }C_N\text{ such that }|\mathcal{F}(u)(\xi)|\le C_N(1+|\xi|)^{-N}\text{ for all }\xi\in\Gamma.\]
\end{lemma}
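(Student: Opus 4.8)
The plan is to localize in both physical and frequency space and then use a compactness argument. Since $u$ is Schwartz outside a compact region, write $u = \psi u + (1-\psi)u$ where $\psi\in C_c^\infty(\mbr^n)$ equals $1$ on a large ball containing the ``bad'' region; the second term $(1-\psi)u$ is Schwartz, hence its Fourier transform decays rapidly in \emph{all} directions, so it suffices to treat the compactly supported distribution $\psi u$. The key observation is that for a compactly supported distribution $w$, $\mcf(w)(\xi)$ decays rapidly in a conic neighborhood of $\xi_0$ \emph{if and only if} $\xi_0\notin\bigcup_{x}\{(x,\xi):(x,\xi)\in\wf(w)\}$; indeed one direction is essentially the definition of $\wf$ via localization (for $w$ with compact support, one does not even need to multiply by a further cutoff), and this is exactly what we want to exploit.

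The main steps are as follows. First, set $K = \supp(\psi u)$, a compact set. For each point $x\in K$, since $(x,\xi_0)\notin\wf(u) = \wf(\psi u)$ near $x$ (using that $\psi\equiv 1$ near $K$), there is a cutoff $\chi_x\in C_c^\infty$ with $\chi_x(x)\ne 0$ and a conic neighborhood $\Gamma_x$ of $\xi_0$ so that $\mcf(\chi_x\psi u)$ decays rapidly on $\Gamma_x$. Second, by compactness of $K$, extract a finite subcover $\{\chi_{x_1},\dots,\chi_{x_N}\}$ of $K$, and form a partition of unity: choose $\rho_j\in C_c^\infty$ subordinate to this cover with $\sum_j\rho_j\equiv 1$ on a neighborhood of $K$, so that $\psi u = \sum_j \rho_j\psi u$ and each $\rho_j\psi u = \rho_j(\chi_{x_j}/\chi_{x_j})\psi u$ — more precisely arrange $\rho_j = \tilde\rho_j\chi_{x_j}$ for suitable $\tilde\rho_j$, so that $\mcf(\rho_j\psi u) = \mcf(\tilde\rho_j\cdot \chi_{x_j}\psi u)$ is the convolution of $\hat{\tilde\rho_j}$ (Schwartz) with $\mcf(\chi_{x_j}\psi u)$ (rapidly decaying on $\Gamma_{x_j}$). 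A standard lemma on convolutions — multiplying a cutoff preserves rapid decay in a slightly smaller cone — shows $\mcf(\rho_j\psi u)$ decays rapidly on a cone $\Gamma_j'\subset\Gamma_{x_j}$ around $\xi_0$. Third, take $\Gamma = (\bigcap_j \Gamma_j')\cap(\text{a cone where }\hat{(1-\psi)u}\text{ obviously decays, i.e. all of }\mbr^n\backslash 0)$; then on $\Gamma$ all the finitely many pieces $\mcf(\rho_j\psi u)$ and $\mcf((1-\psi)u)$ decay rapidly, hence so does their sum $\mcf(u)$.

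The main obstacle — though it is a technical rather than conceptual one — is the bookkeeping in the convolution step: one must verify that convolving a rapidly-decaying-in-a-cone function against a Schwartz function again yields rapid decay in a (slightly narrowed) cone, uniformly. This is the standard estimate underlying the fact that $\wf(\phi w)\subset\wf(w)$ for $\phi\in C_c^\infty$, and one splits the convolution integral into the region where the $\mcf(\chi_{x_j}\psi u)$ factor is in the good cone (where it is small) and the complementary region (where the Schwartz factor $\hat{\tilde\rho_j}$ is evaluated at large argument and hence small); combined with the fact that $\mcf(\chi_{x_j}\psi u)$ has at most polynomial growth everywhere (being the Fourier transform of a compactly supported distribution), this closes the estimate. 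Everything else is routine compactness and partition-of-unity manipulation.
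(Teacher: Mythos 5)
Your proof is correct, but it takes a genuinely different route from the paper's. You argue via the classical definition of the wavefront set: split $u$ into a compactly supported piece $\psi u$ and a Schwartz remainder, use compactness to cover $\operatorname{supp}(\psi u)$ by finitely many cutoffs $\chi_{x_j}$ for which $\mathcal{F}(\chi_{x_j}u)$ decays in cones $\Gamma_{x_j}$, pass to a subordinate partition of unity, and close via the standard ``multiplying by a cutoff preserves rapid decay in a slightly narrower cone'' convolution lemma (which, as you note, uses the polynomial bound on $\mathcal{F}(\chi_{x_j}u)$ to control the off-cone region). Intersecting the finitely many shrunken cones gives the final $\Gamma$. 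The paper instead avoids the compactness and convolution bookkeeping entirely by working at the operator level: it picks a single zeroth-order Fourier multiplier $A = a(D)$ with $a$ conically supported where $\Gamma'$ avoids $\operatorname{WF}(u)$ and $a\equiv 1$ on $\Gamma$, decomposes $Au = \tilde\chi A\chi u + (1-\tilde\chi)A\chi u + A(1-\chi)u$, and checks that each term is Schwartz using three black-box facts (the $\operatorname{WF}'(A)\cap\operatorname{WF}(u)=\emptyset$ mapping property, that off-diagonal-supported operators are smoothing, and that $\Psi$DOs preserve the Schwartz class); then $a\hat u = \widehat{Au}$ is Schwartz and $a\equiv 1$ on $\Gamma$ finishes. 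Your approach is more elementary and self-contained (it only uses the local-Fourier-transform definition of $\operatorname{WF}$) but is heavier on quantitative estimates; the paper's is shorter because it outsources the convolution lemma to the $\Psi$DO calculus and performs the frequency localization once, globally, rather than point-by-point. Both correctly handle the role of the Schwartz-at-infinity hypothesis -- in your version through the $(1-\psi)u$ term and the compactness of $\operatorname{supp}(\psi u)$; in the paper's through the $A(1-\chi)u$ term.
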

\begin{proof}
Suppose $u$ is smooth outside a compact region $K\subset\mathbb{R}^n$. Since $K\times\{\xi_0\}\subset\mathbb{R}^n\times\mathbb{R}^n=T^*\mathbb{R}^n$ is contained in the complement $WF(u)^C$ of $WF(u)$ by hypothesis, and $WF(u)^C$ is open and conical, it follows that there is a conical neighborhood $\Gamma'$ of $\xi_0$ such that $K\times\Gamma'\subset\text{WF}(u)^C$; note then that $\mathbb{R}^n\times\Gamma'\subset\text{WF}(u)^C$. Let $A$ be a Fourier multiplier operator where the Fourier multiplier $a(\xi)$ is a symbol supported in $\Gamma'$ and identically one on an open conical neighborhood $\Gamma$ of $\xi_0$; note then that $A$ is an order $0$ pseudodifferential operator. Let $\chi\in C_c^{\infty}(\mathbb{R}^n)$ be identically $1$ on a ball large enough to contain $\text{sing supp }u$, and let $\tilde\chi\in C_c^{\infty}(\mathbb{R}^n)$ be identically $1$ on $\text{supp }\chi$. We write
\[Au = \tilde\chi A\chi u + (1-\tilde\chi)A\chi u + A(1-\chi)u.\]
We claim that $Au$ is Schwartz, and to do so we show each of the terms on the right-hand side is Schwartz. Note that by construction we have $\text{WF}'(A)\cap WF(u) = \emptyset$, and hence $WF(\tilde\chi A\chi u)\subset WF'(\tilde\chi A)\cap WF(\chi u)\subset WF'(A)\cap WF(u) = \emptyset$, so $\tilde\chi A\chi u$ is smooth. Moreover it is compactly supported by construction, and hence $\tilde\chi A\chi u$ is Schwartz. The operator $(1-\tilde\chi)A\chi$ has Schwartz kernel supported away from the diagonal due to the support properties of $\chi$ and $\tilde\chi$, and hence $(1-\tilde\chi)A\chi$ is an order $-\infty$ pseudodifferential operator; in particular it maps $u$ to a Schwartz function, i.e. $(1-\tilde\chi)A\chi u$ is Schwartz. Finally, since $u$ is Schwartz at infinity, and $(1-\chi)u$ is smooth due to $\chi$ being identically $1$ on $\text{sing supp }u$, it follows that $(1-\chi)u$ is Schwartz, and the pseudodifferential operator $A$ will preserve that property, i.e. $A(1-\chi)u$ is Schwartz as well. Thus, $Au$ is Schwartz, and so is its Fourier transform $a(\xi)\mathcal{F}(u)(\xi)$; the fact that $a$ is identically $1$ on $\Gamma$ thus gives the conclusion.
\end{proof}

\begin{proof}[Proof of Lemma \ref{v2wf}]
By Lemma \ref{zw-lemma}, we know that $\mathcal{F}(v^2)$ does not decay rapidly in a conical neighborhood of $\xi_0 = (\epsilon,1)$ for all sufficiently small $\epsilon>0$. Since $v^2$ is Schwartz outside a compact region by Lemma \ref{schwartz-infinity}, we use the contrapositive of Lemma \ref{fourier-wf} to conclude that $(x,(\epsilon,1))\in WF(v^2)$ for some $x\in\mbr^2$. We now show that this $x$ must be the origin, i.e. the cusp point $B$. Indeed, away from $B$ the cusp $G$ is a union of smooth curves, and hence $I_kL^2(\mbr^2\backslash B;\mcm(\La_G\backslash B)) = I_kL^2(\mbr^2\backslash B;\mcv(G\backslash B))$; in particular it is closed under multiplication. Thus away from $B$ we have that $v^2$ is locally in $I_kL^2(\mbr^2\backslash B;\mcm(\La_G\backslash B))$ (more precisely $\chi v^2\in I_kL^2(\mbr^2\backslash B;\mcm(\La_G\backslash B))$ for all $k$ for any $\chi\in C_c^{\infty}(\mbr^2\backslash B)$), so its wavefront set away from $B$ is contained in $\La_G$. Since
\[\La_G = \left\{(x,\xi)\in T^*\mbr^2\,:\, x = H'(\xi) = \left(-2\frac{\xi_2^3}{\xi_1^3},3\frac{\xi_2^2}{\xi_1^2}\right)\right\},\]
it follows that
\[(x,(\epsilon,1))\in \La_G\implies x = (-2\epsilon^{-3},3\epsilon^{-2}).\]
However, by Lemma \ref{schwartz-infinity} we know that $v^2$ is Schwartz away from a compact region. Hence, for $\epsilon$ sufficiently small, we have $(x,(\epsilon,1))\not\in WF(v^2)$ for any $x\ne 0$, implying that $(0,(\epsilon,1))\in WF(v^2)$ as desired.
\end{proof}

\begin{proof}[Proof of Lemma \ref{new-sing}]
For $v$ as in \eqref{cusp-conorm}, we have $v\in L^{\infty}I_kL^2(\mbr^2;\mcm(\La_G))$ for all $k$ since it is a classical Lagrangian distribution associated to $\La_G$, but by Lemma \ref{v2wf}, we have that $WF(v^2)\not\subset \La_G$, since $WF(v^2)$ contains $(0,(\epsilon,1))\not\in\La_G$. It follows that $v^2$ cannot be a classical Lagrangian distribution associated to $\La_G$, and hence there must exist $k$ for which $v^2\not\in I_kL^2(\mbr^2;\mcm(\La_G))$.
\end{proof}

\begin{remark}
We can take our example to be of the form $\mcr u$ for a function $u$ conormal to a curve in $\mbr^2$ with an inflection point: indeed, if $u(x) = g(x_1)h(x_2-x_1^3)$, where $g\in C_c^{\infty}(\mbr)$, $h:\mathbb{R}\to\mathbb{R}$, and $h(y) = (2\pi)^{-1}\int_{\mathbb{R}}{e^{iy\eta}a(\eta)\,d\eta}$ where $a$ is a symbol on $\mathbb{R}$, then $u$ will be conormal to the curve $x_2 = x_1^3$, which contains a simple inflection point at $(0,0)$. Then the Radon transform can be written as
\begin{align*} \mathscr{R}u(s,\phi) &= \int_{\mbr^2}{e^{i\lambda(s-x_1\cos\phi-x_2\sin\phi)}g(x_1)h(x_2-x_1^3)\,d\lambda\,dx}\\
&=\frac{1}{\sin\phi}\int_{\mbr^2}{e^{i\lambda'(z+x_1w-x_2)}g(x_1)h(x_2-x_1^3)\,d\lambda'\,dx}
\end{align*}
where $\lambda=\lambda'/\sin\phi$, $z = s/\sin\phi$, and $w = -\cot\phi$ (note that the change of coordinates $(s,\phi)\mapsto(z,w)$ is a smooth change of coordinates near $\phi = \pi/2$). If we now let $\xi_1 = \lambda'$ and $\xi_2 = \lambda x_1$, then the integral becomes
\[\int{e^{i(\xi_1z+\xi_2w)}e^{-i\xi_1x_2}g(\xi_2/\xi_1)h(x_2 - \xi_2^3/\xi_1^3)\,d\xi_1\,d\xi_2\,dx_2}.\]
We now let $y = x_2-\xi_2^3/\xi_1^3$, in which case the integral becomes
\[\int{e^{i\left(\xi_1z+\xi_2w-\frac{\xi_2^3}{\xi_1^2}\right)}g(\xi_2/\xi_1)\left(\int{e^{-i\xi_1y}h(y)\,dy}\right)\,d\xi} = \int{e^{i\left(\xi_1z+\xi_2w-\frac{\xi_2^3}{\xi_1^2}\right)}g(\xi_2/\xi_1)a(\xi_1)\,d\xi}.\]
If we let $v$ be a function defined locally near $(z,w) = (0,0)$ to match $\mcr u(s,\phi)$ under the change of coordinates above, then $v$ is a distribution of the form \eqref{cusp-conorm}, as long as $a(\eta)=\chi(\eta)|\eta|^{-\rho}$ where $\chi$ satisfies the hypothesis in Lemma \ref{zw-lemma}.
\end{remark}

\section*{Acknowledgements}
The authors thank Andr\'as Vasy for useful discussions regarding commuting the Radon transform and Ant\^onio S\'a Barreto for helpful conversations on cusp singularities. The authors also acknowledge the Institute for Mathematics and its Applications (IMA) in Minneapolis, Minnesota for hosting the workshop ``Mathematics in Optical Imaging'' where the authors began discussing this project, as well as the Mathematical Sciences Research Institute (MSRI) in Berkeley, California for hosting the workshop ``Recent developments in microlocal analysis'' where the authors continued to collaborate on this project. The authors also thank the referees for their helpful comments in improving the exposition of this paper.


\bibliography{metal}
\bibliographystyle{plain}

\end{document}